\documentclass{article}

\usepackage[utf8]{inputenc}
\usepackage[italian,english]{babel}


\usepackage{geometry}								
\geometry{a4paper,top=3cm,bottom=3cm,left=3cm,right=3cm}	

\usepackage{indentfirst}  		


\usepackage[linktoc=all]{hyperref}	
\usepackage{tikz}				


\usepackage{amsmath}
\usepackage{amssymb}	
\usepackage{mathrsfs}	
\usepackage{amsthm}	

\theoremstyle{plain}
\newtheorem{mythm}{Theorem}[section]
\newtheorem{myprop}[mythm]{Proposition}
\newtheorem{mylemma}[mythm]{Lemma}

\newtheorem{mydef}[mythm]{Definition}

\newtheorem{introthm}{Theorem}

\theoremstyle{remark}

\newtheorem*{myrmk}{Remark}

\newcommand{\ol}[1]{\overline{#1}}

\newcommand{\oc}[1]{\widehat{#1}}

\newcommand{\Rar}{\Rightarrow}
\newcommand{\rar}{\rightarrow}

\newcommand{\gen}[1]{\langle#1\rangle}
\newcommand{\ggen}[1]{\langle\langle#1\rangle\rangle}
\newcommand{\abs}[1]{|#1|}

\newcommand{\bZ}{\mathbb Z}
\newcommand{\bN}{\mathbb N}

\newcommand{\fI}{\mathfrak I}
\newcommand{\fJ}{\mathfrak I} 
\newcommand{\cA}{\mathcal A}
\newcommand{\cB}{\mathcal B}

\newcommand{\cH}{\mathcal H}
\newcommand{\cL}{\mathcal L}
\newcommand{\cM}{\mathcal M}
\newcommand{\cN}{\mathcal N}
\newcommand{\cP}{\mathcal P}
\newcommand{\cQ}{\mathcal Q}
\newcommand{\cR}{\mathcal R}

\newcommand{\sgr}{\le}
\newcommand{\nor}{\trianglelefteq}
\newcommand{\rank}[1]{\text{rank}(#1)}

\newcommand{\core}[1]{\text{core}(#1)}
\newcommand{\bcore}[1]{\text{core}_*(#1)}

\newcommand{\ex}{\text{exp}}
\newcommand{\pol}{\text{pol}}

\newcommand{\rlan}[1]{\cL_{\text{r}}(#1)}
\newcommand{\clan}[1]{\cL_{\text{cr}}(#1)}
\newcommand{\norma}[1]{\|#1\|}
\newcommand{\ram}[1]{\mathrm{ram}(#1)}
\newcommand{\pro}{\rightarrow}
\newcommand{\der}{\xrightarrow{*}}


\newcommand{\Psecidealfingen}{\text{Section 3}}

\newcommand{\Pdegreeset}{\text{Theorem C}}

\newcommand{\Pexamplecyclic}{\text{Section 6.1}}
\newcommand{\Pexampleeven}{\text{Section 6.2}}
\newcommand{\Pexampleodd}{\text{Section 6.3}}


\title{Ideals of equations for elements in a free group and context-free languages}
\author{
Dario Ascari \thanks{The author was funded by the Engineering and Physical Sciences Research Council.}\\
{\small \textit{Mathematical Institute, Andrew Wiles Building,}}\\
{\small \textit{University of Oxford, Oxford OX2 6GG, UK}}\\
{\small e-mail: \texttt{ascari@maths.ox.ac.uk}}\\
\\
}


\begin{document}

\maketitle

\begin{abstract}
Let $F$ be a finitely generated free group, and $H\sgr F$ a finitely generated subgroup. An equation for an element $g\in F$ with coefficients in $H$ is an element $w(x)\in H*\gen{x}$ such that $w(g)=1$ in $F$; the degree of the equation is the number of occurrences of $x$ and $x^{-1}$ in the cyclic reduction of $w(x)$. Given an element $g\in F$, we consider the ideal $\fI_g\subseteq H*\gen{x}$ of equations for $g$ with coefficients in $H$; we study the structure of $\fI_g$ using context-free languages.

We describe a new algorithm that determines whether $\fI_g$ is trivial or not; the algorithm runs in polynomial time. We also describe a polynomial-time algorithm that, given $d\in\bN$, decides whether or not the subset $\fJ_{g,d}\subseteq\fI_g$ of all degree-$d$ equations is empty. We provide a polynomial-time algorithm that computes the minimum degree $d_{\min}$ of a non-trivial equation in $\fI_g$. We provide a sharp upper bound on $d_{\min}$.

Finally, we study the growth of the number of (cyclically reduced) equations in $\fI_g$ and in $\fJ_{g,d}$ as a function of their length. We prove that this growth is either polynomial or exponential, and we provide a polynomial-time algorithm that computes the type of growth (including the degree of the growth if it's polynomial).
\end{abstract}

\begin{center}
\small \textit{Keywords:} Free Groups, Equations over Groups, Context-Free Languages\\
\small \textit{2010 Mathematics subject classification:} 20F70 20E05 (20F10 20E07)
\end{center}


\section{Introduction}

The study of polynomial equations over fields and rings has played a huge role in mathematics along the centuries. In analogy with standard algebraic geometry, a theory of equations in the non-commutative setting of group theory was developed, see \cite{BMR99}, \cite{KM98a}, \cite{KM98b}. Given two groups $H\sgr G$, an \textit{equation} in the variables $x_1,...,x_m$ with coefficients in $H$ is an equality of the form $w(x_1,...,x_m)=1$ for some element $w(x_1,...,x_m)\in H*F(x_1,...,x_m)$, where $F(x_1,...,x_m)$ denotes the free group with basis $x_1,...,x_m$; an $m$-tuple of elements $(g_1,...,g_m)$ of elements of $G$ is a \textit{solution} to the equation if $w(g_1,...,g_m)=1$ in $G$.

The problem of solving (systems of) equations over groups has been extensively studied in literature, and turned out to be much harder than its polynomial counterpart. In the case of two finitely generated free groups $H\sgr F$, the first algorithm to solve equations (i.e. to find solutions in $F$ to equations with coefficients in $H$) was found by G. S. Makanin \cite{Mak83}; this produced a rich and fruitful theory, leading to the introduction of \textit{limit groups} and to the construction of \textit{Makanin-Razborov diagrams} (see \cite{Raz85}, \cite{Sel01}), and providing strong results about the first order theory of free groups (see \cite{Mak85}, and also \cite{Sel06} and \cite{KM06} for the solution to Tarski's problem about free groups). In general the set of solutions to a system of equations is extremely hard to describe; for the particular case of equations in one variable, a description of the set of solutions can be found in \cite{BGM06}.

More recently, the study of the dual problem was initiated, opening a new perspective into the topic: given two finitely generated free groups $H\sgr F$ and an element $g\in F$, we are interested in understanding the \textbf{ideal} $\fI_g=\{w(x)\in H*\gen{x} : w(g)=1$ in $F\}$. The ideal $\fI_g\subseteq H*\gen{x}$ is a normal subgroup, namely the kernel of the evaluation homomorphism $\varphi_g:H*\gen{x}\rar F$ that sends $x$ to $g$.

In \cite{RV23}, A. Rosenmann and E. Ventura provide an algorithm that, given $H$ and $g$, determines whether the ideal $\fI_g$ is trivial or not. Following their terminology, we say that $g\in F$ \textbf{depends} on $H\sgr F$ if $\fI_g\not=1$ or, equivalently, $\rank{\gen{H,g}}\le\rank{H}$. In \cite{RV23} they prove that the set of elements of $F$ that depend on $H$ is a finite union of double cosets $Hg_1H,...,Hg_kH$, and that the elements $g_1,...,g_k$ can be algorithmically computed from $H$ and $g$.

In a previous paper \cite{PART1}, the author studied the structure of the ideal $\fI_g$ using the technique of folding of graphs, paying particular attention to the degree of the equations. For an equation $w(x)\in\fI_g$ define its \textbf{degree} as the number of occurrences of $x$ and $x^{-1}$ in its cyclic reduction. We produced an algorithm that computes the minimum possible degree for a non-trivial equation in $\fI_g$, and we showed that all the equations of a certain degree $d$ can be characterized by looking only at the short equations of the same degree $d$ (i.e. at the equations up to a certain length, which depends on $H,g,d$).

In the current paper we take a different approach, studying the structure of the ideal $\fI_g$ with techniques based on context-free languages. Context-free languages have been widely studied in computer science, and they arise naturally in group theory. In particular, a celebrated theorem of D. E. Muller and P. E. Schupp states that the word problem of a finitely generated group is context-free if and only if the group is virtually free; for more details see \cite{MS83}. Context-free languages allow us to improve some of the results of our previous paper \cite{PART1}, in particular those concerning the time-complexity of the algorithms. Context-free languages also allow us to obtain information about the growth of the number of equations in $\fI_g$ of a certain degree $d$ as a function of their length.

We point out that, besides context-free languages, there are also other families of languages (and automata) that have been fruitfully used in
the study of problems related to group theory; examples of such families are \textit{indexed languages} (see \cite{BG96}, \cite{Bri05}, \cite{HR06}) and ET$0$L
\textit{and} EDT$0$L \textit{languages} (see \cite{CEF17}). For applications of EDT$0$L languages to the study of equations over groups we refer the reader to \cite{CDE15} and \cite{DJK16}. Families of formal languages also arise naturally when studying \textit{combings} in a group, i.e. normal forms for the elements of the group (see \cite{BG93} and \cite{Bri05}).

\subsection*{Results and structure of the paper}

Let $F_n$ be a finitely generated free group of rank $n\in\bN$; fix a finitely generated subgroup $H\sgr F_n$ and an element $g\in F_n$. In this paper we study the structure of the ideal $\fI_g=\{w(x)\in H*\gen{x} : w(g)=1$ in $F_n\}\nor H*\gen{x}$. Most of the results can be generalized to equations in more variables but, for simplicity of notation, for most of the paper we focus on the one-variable case; the results in more variables can be found in Section \ref{SectionMultivariate} at the end of the paper.

\

In Section \ref{SectionCfree} we briefly recall the notion of context-free grammar and language; for more details the reader can refer to \cite{HMU06}. We restate some classical result about context-free languages. We also recall the notion of context-free subset of a free group, which will be fundamental in the whole paper.

\

In Section \ref{SectionCfreeIdeals} we prove that the ideal $\fI_g$ is a context-free subset of $H*\gen{x}$, and we provide a new algorithm (substantially different from the ones already in literature) that determines whether the ideal is non-trivial; we will prove later, in Section \ref{SectionAlgorithms}, that the algorithm runs in polynomial-time.

\begin{introthm}[See Theorem \ref{cfreeIg}]\label{introcfreeIg}
The set $\fI_g$ is context-free as a subset of $H*\gen{x}$.
\end{introthm}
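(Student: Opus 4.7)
The plan is to exhibit $\fI_g$ as the preimage of the word problem of $F_n$ under a monoid homomorphism, and then invoke the standard closure properties of the context-free class.

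First I would fix a free basis for $H*\gen{x}$. By Nielsen--Schreier the subgroup $H\sgr F_n$ is free of finite rank; choosing a basis $h_1,\dots,h_m$ of $H$, the free product $H*\gen{x}$ is itself a free group with basis $\Sigma=\{h_1,\dots,h_m,x\}$, and its elements are in bijection with reduced words over $\Sigma^{\pm 1}$. The set $R\subseteq(\Sigma^{\pm 1})^*$ of reduced words is a regular language, since it is defined by the forbidden factors $yy^{-1}$ for $y\in\Sigma^{\pm 1}$.

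Next I would translate the condition $w(g)=1$ in $F_n$ into membership in a word problem. Fix a free basis $a_1,\dots,a_n$ of $F_n$, and for every $y\in\Sigma$ pick a word $u_y$ over $\{a_1,\dots,a_n\}^{\pm 1}$ representing $\varphi_g(y)\in F_n$ (so $u_x$ is a chosen word for $g$, and each $u_{h_i}$ a chosen word for $h_i$ as an element of $F_n$). Setting $u_{y^{-1}}:=u_y^{-1}$ and extending by concatenation yields a free-monoid homomorphism $\psi:(\Sigma^{\pm 1})^*\to(\{a_1,\dots,a_n\}^{\pm 1})^*$. By construction, for any $w\in R$ the image $\varphi_g(w)\in F_n$ is represented by the (possibly unreduced) word $\psi(w)$, so $w\in\fI_g$ iff $\psi(w)$ lies in the word problem $W_{F_n}$ of $F_n$.

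To conclude, $W_{F_n}$ is a classical context-free language (a Dyck-type language, and a special case of the Muller--Schupp correspondence cited in the introduction). Context-free languages are closed under inverse homomorphism, so $\psi^{-1}(W_{F_n})$ is context-free, and their intersection with the regular language $R$ is again context-free. Thus
\[\fI_g \;=\; R\cap\psi^{-1}(W_{F_n})\]
is context-free as a language of reduced words in the basis $\Sigma$, i.e.\ as a subset of $H*\gen{x}$.

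The three ingredients (the Dyck word problem, closure under inverse homomorphism, intersection with a regular language) are all classical, so the only real point of care is checking that the notion of ``context-free subset of the free group $H*\gen{x}$'' fixed in Section \ref{SectionCfree} coincides with the language-of-reduced-words description used above, and in particular is independent of the chosen free basis. This is a definition-unwinding step rather than a genuine obstacle; the whole conceptual content of the theorem is identifying $\fI_g$ as the pullback, under a letter substitution, of the Dyck-like word problem of $F_n$.
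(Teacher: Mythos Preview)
Your argument is correct and follows the same idea as the paper: define a monoid homomorphism $\psi$ from words over a basis of $H*\gen{x}$ to words over a basis of $F_n$, observe that the word problem $\cL(1)$ of $F_n$ is context-free, and pull it back via closure under inverse homomorphism. The only difference is cosmetic: because the paper's Definition~\ref{cfreesubset} declares $W$ context-free when the language $\cL(W)$ of \emph{all} words (not just reduced ones) is context-free, the paper simply notes $\psi^{-1}(\cL(1))=\cL(\fI_g)$ and is done---your extra intersection with the regular set $R$ of reduced words is unnecessary (and the equivalence you flag at the end is exactly Proposition~\ref{cfreeequivdef}).
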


\begin{introthm}[See Theorems \ref{cfreeIg2} and \ref{cfreeIg3}]\label{introcfreeIg2}
There is a polynomial-time algorithm that tells us whether $\fI_g$ contains a non-trivial equation or not and, in case it does, produces a non-trivial equation in $\fI_g$.
\end{introthm}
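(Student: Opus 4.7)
The plan is to reduce the problem to the classical emptiness problem for context-free languages. By Theorem \ref{introcfreeIg}, the set $\fI_g$ is context-free as a subset of $H*\gen{x}$. I expect the proof of that theorem, developed in Section \ref{SectionCfreeIdeals}, to be constructive and in fact to produce, in time polynomial in the size of the input data $(H,g)$, an explicit context-free grammar $G$ (say in Chomsky normal form) whose language is exactly the set of reduced words in $H*\gen{x}$ representing elements of $\fI_g$. The trivial element $1\in H*\gen{x}$ corresponds to the empty reduced word, so the existence of a non-trivial equation in $\fI_g$ is equivalent to $L(G)$ containing a non-empty word.

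To decide whether $L(G)$ contains a non-empty word, I would first eliminate $\varepsilon$-productions from $G$ by the standard transformation (polynomial in the grammar size), and then apply the classical fixpoint procedure that computes the set of \emph{productive} nonterminals (those deriving some terminal string). This step is linear in the grammar size, and $L(G)$ contains a non-empty word if and only if the start symbol of the $\varepsilon$-free grammar is productive. Should this be the case, a witness is obtained by computing, for each productive nonterminal, a shortest terminal string it derives, and tracing back through the corresponding productions; this produces an explicit non-trivial element of $\fI_g$ in polynomial time as well.

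The only substantial obstacle is the effective, polynomial-size construction of $G$ from $(H,g)$. Context-freeness of $\fI_g$ will arise from the combinatorics of folded graphs developed in Section \ref{SectionCfreeIdeals}, where the natural grammar records how an equation parses through the folded graph encoding $\gen{H,g}$; I need to verify that the resulting grammar has polynomially many nonterminals and productions in $|H|+|g|$, and that each production can be written down in polynomial time. The verification of this polynomial bound is the non-routine ingredient, and it is postponed to Section \ref{SectionAlgorithms}. Once it is established, the remainder of the argument is standard context-free language theory, and yields both the decision procedure and the production of a witness within the claimed complexity.
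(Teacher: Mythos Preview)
Your high-level strategy---build a polynomial-size context-free grammar for the (non-trivial reduced) words in $\fI_g$ and then run the standard emptiness/productivity algorithm---is exactly what the paper does. Two points deserve correction, however.

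First, the grammar does \emph{not} come from folded graphs. In Section~\ref{SectionCfreeIdeals} the paper observes that $\cL(\fI_g)=\psi^{-1}(\cL(1))$, where $\cL(1)\subseteq\cA^*$ is the (context-free) word problem of $F_n$ and $\psi:\cB^*\to\cA^*$ is the substitution sending each generator of $H*\gen{x}$ to a word for its image in $F_n$; closure of context-free languages under inverse substitution gives the grammar. The polynomial bound is obtained in Section~\ref{SectionAlgorithms} by writing down an explicit grammar for $\cL(\ker\varphi_g)$ whose non-terminals are indexed by suffixes of the words $\psi(b)$; this yields size $O(nr^3L^3)$, and intersecting with the regular language of non-trivial reduced words (rather than eliminating $\epsilon$-productions, as you propose) gives $O(nr^9L^3)$.

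Second, be careful with ``produces a non-trivial equation in polynomial time''. The shortest word in a context-free language can be exponential in the grammar size, and indeed Section~\ref{exampledmin} exhibits ideals where every non-trivial equation has length exponential in the rank $r$. The paper therefore outputs not the equation itself but a polynomial-size list of production rules from which the equation can be reconstructed (Proposition~\ref{explicit1}); only under an additional degree bound (Theorem~\ref{cfreeIg3}(iii)) does it write the equation out explicitly. Your ``shortest terminal string'' step, if taken literally, is not polynomial.
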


Our algorithm consists essentially of building a context-free grammar for the context-free language of the ideal $\fI_g$, and then checking whether the language generated by this grammar is non-empty.

\begin{myrmk}
We point out that two other algorithms to determine whether $\fI_g$ is non-trivial were already known, one based on Nielsen transformations (see \cite{RV23}) and the other based on Stallings' folding operations (see both \cite{PART1} and \cite{RV23}). These two algorithms are very different in nature from the context-free language algorithm presented in this paper. The Stallings' folding algorithm provides more accurate information about the ideal in a group theoretic sense, as it allows to explicitly compute a finite set of generators of the ideal $\fI_g$ as normal subgroup of $H*\gen{x}$. However, the context-free language algorithm allows us for a better description of the set of all words representing elements of $\fI_g$; in particular, it makes it easier (and faster in terms of running-time) to determine whether the ideal contains an equation of a given degree $d$, and it allows us to study in detail the growth of the number of equations as a function of their length.
\end{myrmk}

We then turn our attention to the set $\fJ_{g,d}$ of the equations of a certain degree $d$, and we prove similar results. For an integer $d\ge1$, define $\fJ_{g,d}$ to be the subset of $\fI_g$ consisting of the equations of degree $d$.

\begin{introthm}[See Theorem \ref{cfreeJgd}]\label{introcfreeJgd}
The set $\fJ_{g,d}$ is context-free as a subset of $H*\gen{x}$.
\end{introthm}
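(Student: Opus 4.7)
The plan is to derive $\fJ_{g,d}$ from $\fI_g$ via closure operations that preserve context-freeness, using the normality of $\fI_g$. Since $\fI_g=\ker(\varphi_g)$ is a normal subgroup of $H*\gen{x}$, we have $w\in\fI_g$ if and only if its cyclic reduction $\hat w$ also lies in $\fI_g$; and by definition, $\deg(w)=d$ if and only if $\hat w$ has exactly $d$ occurrences of $x^{\pm 1}$. Combining these,
\[
\fJ_{g,d}=\{w\in H*\gen{x}:\hat w\in L\},\qquad L:=\fI_g\cap CR_d,
\]
where $CR_d$ denotes the set of cyclically reduced elements of $H*\gen{x}$ whose reduced form contains exactly $d$ occurrences of $x^{\pm 1}$.

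I would first argue that $CR_d$ is a regular subset of $H*\gen{x}$: since $H*\gen{x}$ is a free group (the free product of two free groups), cyclic reducedness is a local condition on reduced words (the first letter is not the inverse of the last one), and the $x$-count can be tracked by a finite-state counter saturating at $d+1$. Hence by Theorem~\ref{introcfreeIg}, together with the classical closure of context-free subsets under intersection with regular subsets, the set $L$ is context-free.

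It remains to show that $\fJ_{g,d}=\{w:\hat w\in L\}$ is context-free. Since $\fI_g$ is normal and $CR_d$ is invariant under cyclic permutation, $L$ is itself closed under cyclic permutation, so $\fJ_{g,d}$ coincides with the conjugacy closure $\{uvu^{-1}:u\in H*\gen{x},\,v\in L\}$. I would construct a pushdown automaton for this closure by combining a PDA for $L$ with a palindromic bracket-matching scheme: nondeterministically push the prefix $u$ of the input onto the stack, insert a bottom marker, simulate the PDA for $L$ on the cyclically reduced middle segment, and finally pop the marker and verify that the remaining suffix equals $u^{-1}$ by popping the stack. The main technical obstacle is the interleaving of the two stack regimes (matching $u$ with $u^{-1}$ on one hand, simulating the PDA for $L$ on the other), which is handled by the bottom marker; one must also check carefully that the guessed decomposition $w=uvu^{-1}$ agrees with the actual reduced-word structure of $w$, but the cyclically reduced condition on $v$ makes this decomposition unambiguous.
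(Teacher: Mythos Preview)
Your approach is correct and is essentially the same as the paper's: intersect $\fI_g$ with the regular set of cyclically reduced words having exactly $d$ occurrences of $x^{\pm1}$, then take the conjugacy closure. The paper isolates the last step as a standalone lemma (Proposition~\ref{cfreeconjsat}: the conjugacy saturation of a context-free subset is context-free) and proves it by a one-line grammar modification ($S'\to a_iS'\ol a_i \mid S$) rather than by a PDA construction, but the content is the same.

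One point in your PDA sketch deserves tightening. The machine you describe accepts exactly the words of the form $u\cdot v\cdot \ol u$ with $v\in\cL(L)$, and this is \emph{not} all of $\cL(\fJ_{g,d})$: for instance a word such as $h_1 v'\,\ol h_1 h_2\ol h_2$ (with $v'$ a reduced word for some $v\in L$) represents an element of $\fJ_{g,d}$ but admits no such decomposition, since its first and last letters are not mutual inverses. What is true is that your PDA's language sits between $\rlan{\fJ_{g,d}}$ and $\cL(\fJ_{g,d})$: every \emph{reduced} representative of an element of $\fJ_{g,d}$ does decompose uniquely as $u\hat w\,\ol u$ with $\hat w$ cyclically reduced. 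So you should finish by intersecting with the regular language of reduced words to obtain $\rlan{\fJ_{g,d}}$, and then invoke the equivalence between context-freeness of $\rlan{W}$ and of $\cL(W)$ (Proposition~\ref{cfreeequivdef}). This is precisely how the paper closes the argument in Proposition~\ref{cfreeconjsat}; your final sentence gestures at this, but the issue is not ambiguity of the decomposition --- it is that unreduced words need not admit any such decomposition at all.
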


\begin{introthm}[See Theorems \ref{cfreeJgd2} and \ref{cfreeJgd3}]\label{introcfreeJgd2}
There is a polynomial-time algorithm that tells us whether the set $\fJ_{g,d}$ is non-empty and, if so, produces an element in $\fJ_{g,d}$.
\end{introthm}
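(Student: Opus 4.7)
The plan is to reduce the problem to the classical emptiness/witness problem for context-free grammars, which is solvable in time polynomial in the size of the grammar. By Theorem~\ref{introcfreeJgd} the set $\fJ_{g,d}$ is context-free, so there exists a context-free grammar $G_{g,d}$ with $L(G_{g,d})=\fJ_{g,d}$. The decisive first step is to refine the proof of Theorem~\ref{introcfreeJgd} so that the resulting grammar $G_{g,d}$ has size polynomial in the input $(H,g,d)$ and can be constructed from this input in polynomial time. I expect this to follow from the same folding-based construction that yields context-freeness, provided the non-terminals are chosen efficiently (see the obstacle paragraph below).

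Given such a grammar, emptiness of $\fJ_{g,d}=L(G_{g,d})$ is decided by the standard fixed-point marking procedure: iteratively mark a non-terminal as \emph{productive} whenever it has some production whose right-hand side consists only of terminals and already-marked non-terminals, until no further non-terminal can be marked. Then $\fJ_{g,d}\neq\emptyset$ precisely when the start symbol of $G_{g,d}$ is productive, and the whole computation runs in time polynomial in $|G_{g,d}|$. To produce an actual element when the ideal is non-empty, I would run the procedure constructively: for every productive non-terminal $A$ record a pointer to a single witnessing production whose right-hand side uses only previously-productive non-terminals. Unfolding these pointers starting from the start symbol yields a derivation tree of polynomial size, hence a cyclically reduced equation $w(x)\in\fJ_{g,d}$ of polynomial length in the input.

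The main obstacle is controlling the size of $G_{g,d}$. A naive reading of a grammar out of the proof of Theorem~\ref{introcfreeJgd} could give a grammar exponential in $d$ or in $|H|$, which would be useless for the desired polynomial-time bound. I would therefore design $G_{g,d}$ so that its non-terminals encode only a polynomial amount of bookkeeping: essentially pairs consisting of a vertex of the Stallings folding of $H$ (tracking progress of the evaluation homomorphism $\varphi_g$ on a prefix already read) together with a counter for the number of occurrences of $x^{\pm1}$ consumed so far (bounded by $d$) and a small token enforcing cyclic reduction. Productions then correspond to single edge-steps in this combinatorial object. Verifying that such a grammar generates exactly $\fJ_{g,d}$ and has polynomial size is where the bulk of the work lies; once this is established, the standard CFG algorithms above complete the argument.
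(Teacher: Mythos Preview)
Your high-level plan---build a polynomial-size context-free grammar for $\fJ_{g,d}$ and then run the standard linear-time emptiness/witness procedure---is exactly what the paper does. The paper factors the construction: it first gives an explicit unambiguous grammar for $\cL(\fI_g)$ (Proposition~\ref{languagekernel}) of size $O(nr^3L^3)$ with $\ram{\cP}=2$, and then intersects with a deterministic finite automaton of size $O(rd)$ recognizing cyclically reduced words with exactly $d$ occurrences of $x^{\pm1}$ (Proposition~\ref{cfreeintersection2}). The resulting grammar has size $O(nr^{15}L^3d^6)$, and the emptiness check of Proposition~\ref{nonempty2} then gives the running time in Theorem~\ref{cfreeJgd3}.

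There are two points where your sketch diverges in a way that matters. First, your proposed non-terminals---a vertex of the Stallings folding of $H$ together with a bounded counter and a cyclic-reduction token---do not suffice. Tracking the image of a prefix under $\varphi_g$ means tracking an element of $F_n$, which is infinite; a finite set of Stallings vertices cannot do this, and that is precisely why a pushdown (not a finite automaton) is needed. The paper's grammar handles this differently: its non-terminals are symbols $A_i^{\lambda,\mu}$ indexed by a generator index $i$ and a pair of \emph{suffixes} $\lambda,\mu$ of the words $h_1,\dots,h_r,g$, encoding ``from remaining suffix $\lambda$, derive something whose image contributes $a_i$ and leaves remaining suffix $\mu$''. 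This is the device that keeps the grammar polynomial while still capturing the full kernel.

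Second, your claim that unfolding the witness pointers yields ``a derivation tree of polynomial size, hence a cyclically reduced equation of polynomial length'' is not justified. The pointers form a DAG on the non-terminals, and unfolding a DAG into a tree can blow up exponentially (think $N_k\to N_{k-1}N_{k-1}$). The paper is explicit about this: it notes that the shortest word in a context-free language can be exponential in $\norma{\cP}$, and therefore first outputs only the compact list of productions (Proposition~\ref{explicit1}), and then, to actually write the equation down, invokes an external length bound on the shortest degree-$d$ equation coming from \cite{PART1} (see Theorem~\ref{cfreeJgd3}(iii) and Proposition~\ref{explicit3}). Without that external bound your argument for producing an explicit polynomial-length witness is incomplete.
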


\

In Section \ref{SectionAsymptotic} we study the growth of the number of equations in the sets $\fI_g$ and $\fJ_{g,d}$. Define $\rho_g(M)$ to be the number of cyclically reduced words of length at most $M$ that represent an equation in $\fI_g$. Similarly, define $\rho_{g,d}(M)$ to be the number of cyclically reduced words of length at most $M$ that represent an equation in $\fJ_{g,d}$. Then we prove the following results:

\begin{introthm}[See Theorem \ref{growth1}]\label{introgrowth1}
The function $\rho_g(M)$ has exponential growth.
\end{introthm}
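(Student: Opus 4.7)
Assume throughout that $\fI_g$ is non-trivial and that $H*\gen{x}$ is non-abelian (equivalently $H\ne 1$), the remaining cases being degenerate. The plan is to exhibit exponentially many distinct cyclically reduced elements of $\fI_g$ via a small-cancellation / ping-pong construction inside the ambient free group $H*\gen{x}$.

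First, I would find two non-commuting cyclically reduced equations $u,v\in\fI_g$. Pick any non-trivial cyclically reduced $u\in\fI_g$. Since centralizers of non-trivial elements in the free group $H*\gen{x}$ are cyclic, one can choose $r\in H*\gen{x}$ outside the centralizer of $u$; by normality of $\fI_g$ the element $rur^{-1}$ lies in $\fI_g$, and I let $v$ be its cyclic reduction, which is still in $\fI_g$ and still does not commute with $u$. Being two non-commuting elements of a free group, $u$ and $v$ generate a rank-$2$ free subgroup.

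Second, I would apply a standard ping-pong argument. Because $u$ and $v$ are not proportional as cyclic words, the maximal cancellation $c_0$ that can occur at a junction of a product of the form $u^{\pm n}v^{\pm m}$ is bounded by a constant (otherwise $u^{\pm\infty}$ and $v^{\pm\infty}$ would share a common subword of length exceeding $|u|+|v|$, forcing $u$ and $v$ to commute). Choose $N$ with $N\min(|u|,|v|)>2c_0$, and set $a=u^N$, $b=v^N$. Then every alternating word $W=a^{\epsilon_1}b^{\delta_1}\cdots a^{\epsilon_k}b^{\delta_k}$ with $\epsilon_i,\delta_i\in\{\pm 1\}$ is reduced in $H*\gen{x}$. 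Up to replacing $u$ and $v$ by suitable cyclic conjugates (still in $\fI_g$), one can furthermore arrange that every such $W$ with a fixed choice of boundary signs is cyclically reduced, at the cost only of a constant factor in the count.

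Counting then completes the argument: for each $k$ there are at least $2^{2k-2}$ admissible products, pairwise distinct in $\fI_g$ because $\gen{a,b}$ is free of rank $2$, and each has length at most $2kN\max(|u|,|v|)$ in the generators of $H*\gen{x}$. Setting $C=2N\max(|u|,|v|)$ yields $\rho_g(Ck)\ge 2^{2k-2}$, which proves exponential growth. The main obstacle I foresee is the cyclic-reduction refinement: ordinary reduction follows directly from the overlap estimate $c_0$, but cyclic reduction requires the additional choice of cyclic conjugates of $u,v$ so that the first and last letters of the products cannot cancel; this is routine provided $u$ and $v$ are not single generators, and the degenerate single-letter cases can be handled directly (there $\fI_g$ essentially contains all of $H*\gen{x}$ and the conclusion is immediate).
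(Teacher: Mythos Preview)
Your overall strategy---find two non-commuting elements of $\fI_g$, pass to high powers, and count alternating products---is sound, but the first step contains a gap you did not flag. When $u$ is cyclically reduced, the cyclic reduction of any conjugate $rur^{-1}$ is always a \emph{cyclic permutation} of $u$. For a generic $r$ (specifically, whenever the last letter of $r$ is neither the inverse of the first letter of $u$ nor equal to the last letter of $u$) there is no cancellation between $r$ and $u$ or between $u$ and $r^{-1}$, so $rur^{-1}$ is already reduced and its cyclic reduction is $u$ itself; then $v=u$ and the two commute. Merely choosing $r$ outside the centralizer of $u$ does not prevent this. To repair the step you must either choose $r$ with care and then argue separately that a non-trivial cyclic permutation of $u$ cannot commute with $u$, or bypass the construction entirely by invoking the standard fact that a non-trivial normal subgroup of a non-abelian free group is non-abelian and pick non-commuting $u,v\in\fI_g$ directly. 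The ping-pong and cyclic-reducedness bookkeeping in your second step can be made to work along the lines you sketch, though it needs more detail than you give.

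The paper's proof is quite different and more direct. It first records that $\clan{\fI_g}$ is context-free, so by the polynomial/exponential dichotomy for context-free languages one only needs some exponential lower bound. When $\rank{H}\ge 2$ the paper fixes a single cyclically reduced $w\in\fI_g$ and, for every reduced word $h$ in $h_1,\dots,h_r$ avoiding four specified boundary letters, writes down the cyclically reduced word $hw\ol{h}w$; these are pairwise distinct and there are exponentially many such $h$. The case $\rank{H}=1$ is handled by an explicit combinatorial family. Your approach is more uniform (no case split on $\rank{H}$) but pays for this with overlap and cancellation bookkeeping; the paper exploits the fact that $H$ itself already has exponential growth when $\rank{H}\ge2$, which lets it avoid that bookkeeping entirely.
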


\begin{introthm}[See Theorems \ref{growth2} and \ref{growth3}]\label{introgrowth2}
Let $d\in\bN$ be a non-negative integer. Then the function $\rho_{g,d}(M)$ either has exponential growth or else it is bounded above and below by polynomials of degree $k$ for some $k\in\bN$. Moreover, there is a polynomial-time algorithm that tells us which case takes place, and in the second case computes the integer $k$.
\end{introthm}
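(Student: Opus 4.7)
The plan is to reduce the claim to the classical dichotomy between polynomial and exponential growth for context-free languages, applied to the cyclically reduced equations in $\fJ_{g,d}$.

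First, I would show that $\rho_{g,d}(M)$ is the counting function of a context-free language. The set $\cR$ of cyclically reduced words of $H*\gen{x}$ is regular: it is accepted by a finite-state automaton whose states encode the previous letter (to forbid internal cancellations) and the initial letter (to forbid cancellation between the last and the first letter). Thus $L_{g,d}:=\fJ_{g,d}\cap\cR$ is the intersection of a context-free and a regular language, hence context-free by Theorem~\ref{introcfreeJgd} and a standard product construction; this also produces an effective grammar $G_{g,d}$ for $L_{g,d}$ of size polynomial in the input, and by construction $\rho_{g,d}(M)=|\{w\in L_{g,d}:|w|\le M\}|$.

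Second, I would invoke the following dichotomy, valid for any context-free grammar $G$: the counting function of $L(G)$ is either sandwiched between two polynomials of the same integer degree $k\ge0$, or else it is eventually at least $\alpha^M$ for some $\alpha>1$. The degree $k$ admits a combinatorial interpretation in terms of how many pumpable factors can appear simultaneously in a derivation and commute with one another, and the exponential case is characterised by the presence of a non-terminal that admits two pumpable factors living on disjoint subtrees and pumping in non-commuting ways. Applying this dichotomy to $G_{g,d}$ yields the required statement.

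Third, to get the polynomial-time algorithm I would trim $G_{g,d}$ of useless non-terminals, compute the strongly-connected-component decomposition of its non-terminal dependency graph, and test on each SCC whether its recursive productions fit a commuting or a non-commuting pattern. Exponential growth is detected by any single non-commuting SCC that occurs in some derivation, while in the polynomial case $k$ is the maximum length of a chain of commuting SCCs along a derivation, computable in linear time from the SCC-DAG. Since $|G_{g,d}|$ is polynomial in the input, the whole procedure runs in polynomial time.

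The principal obstacle is the second step: establishing the dichotomy in its strong form, with matching polynomial lower and upper bounds of the same degree $k$, and translating the combinatorial condition on the grammar into a criterion that can be checked in polynomial time. This should follow from a careful pumping-style analysis distinguishing commuting iterations, which populate roughly $\binom{M}{k}$ distinct derivations and so yield polynomial growth of degree exactly $k$, from non-commuting iterations, which populate a binary branching tree of derivations and so yield exponential growth; working out this bookkeeping cleanly inside the concrete grammar for $L_{g,d}$ is where most of the technical effort in Section~\ref{SectionAsymptotic} will lie.
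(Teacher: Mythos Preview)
Your approach is essentially the same as the paper's: build a polynomial-size context-free grammar for the language $\clan{\fJ_{g,d}}$ of cyclically reduced equations of degree $d$, and then apply the polynomial/exponential growth dichotomy for context-free languages together with its polynomial-time decision procedure. The only substantive difference is that you anticipate the ``principal obstacle'' to be establishing the dichotomy itself and extracting the degree $k$ algorithmically; in the paper this is not worked out but simply quoted as Theorem~\ref{cfreegrowth}, a known result from the literature (see \cite{Shallit}, and also \cite{Bridson1}, \cite{Incitti}). Consequently the proofs of Theorems~\ref{growth2} and~\ref{growth3} are each only a few lines: produce the grammar (via Proposition~\ref{languagekernel} and the intersection construction, so that its size is polynomial) and invoke the cited theorem. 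Your sketch of the SCC-based algorithm is a reasonable outline of what \cite{Shallit} does, but none of that bookkeeping appears in Section~\ref{SectionAsymptotic}.
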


In order to prove Theorems \ref{introgrowth1} and \ref{introgrowth2}, we rely on the fact that the sets $\fI_g$ and $\fJ_{g,d}$ are context-free. The growth rate of arbitrary context-free languages has already been studied, see \cite{BG02} and \cite{Inc01} and \cite{GKRS08}.

We then study the set $D_g\subseteq\bN$ of possible degrees for non-trivial equations in $\fI_g$. In \cite{PART1} the author proved that $D_g$ is either equal to $\bN$ or to $2\bN$, up to a finite set; more precisely, if $D_g$ contains an odd number then $\bN\setminus D_g$ is finite, while if $D_g$ only contains even numbers then $2\bN\setminus D_g$ is finite. Moreover he established an algorithm that determines which case takes place, and computes the finite set $\bN\setminus D_g$ or $2\bN\setminus D_g$ respectively. In this paper we consider the partition of $D_g$ based on the growth rate of the function $\rho_{g,d}$:
$$D_g=D^\ex_g\sqcup\bigsqcup_{k\in\bN}D^{\pol,k}_g$$
where $D_g^{\pol,k}:=\{d\in D_g : \rho_{g,d}$ has polynomial growth of degree $k\}$ and $D_g^\ex=\{d\in D_g : \rho_{g,d}$ has exponential growth$\}$. We prove that this partition can be algorithmically computed too.

\begin{introthm}[See Theorem \ref{Dgpolk}]\label{introDgpolk}
Suppose $H$ has rank at least $2$. Then there is an algorithm that computes the finite sets $D^{\pol,k}_g$ for $k\in\bN$; all but finitely many of these are empty.
\end{introthm}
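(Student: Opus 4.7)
The plan is to reduce the classification to a finite check by showing that, under the assumption $\text{rank}(H)\ge 2$, all but finitely many $d\in D_g$ automatically lie in $D_g^\ex$. Granted this, Theorem \ref{introgrowth2} classifies each of the remaining degrees individually, and the previous paper \cite{PART1} provides an effective description of $D_g$, so one only has to scan a computable finite list.

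The key lemma is the following: if $d=d_1+d_2$ with both $d_1,d_2\in D_g$ positive, then $d\in D_g^\ex$. To prove it I fix cyclically reduced equations $w_i(x)\in\fI_g$ of degree $d_i$ and, for each $h\in H$, set
$$u_h(x):=w_1(x)\,h\,w_2(x)\,h^{-1}.$$
Since $\fI_g\nor H*\gen{x}$ and contains both $w_1$ and $w_2$, every $u_h$ lies in $\fI_g$. Because $\text{rank}(H)\ge 2$, there are exponentially many $h\in H$ of length $\ell$, and at most a constant fraction of these fail any of the finitely many constraints that their initial and final letters must satisfy in order to rule out cancellations at the four internal boundaries of $u_h$ and at its cyclic join. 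For such $h$ the word $u_h$ is already cyclically reduced, has length $\abs{w_1}+\abs{w_2}+2\ell$, and has degree exactly $d_1+d_2=d$; distinct $h$ produce distinct words $u_h$, yielding $\rho_{g,d}(M)\ge c^M$ for some $c>1$.

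The finiteness step is then immediate from the dichotomy established in \cite{PART1}: $D_g$ is either cofinite in $\bN$ or cofinite in $2\bN$. In either case, writing $d_{\min}=\min D_g$, for every sufficiently large $d\in D_g$ one can decompose $d=d_{\min}+(d-d_{\min})$ with both summands positive and in $D_g$ (the parity condition is automatic in the $2\bN$ case). The set $S$ of $d\in D_g$ which are not so decomposable is therefore finite and effectively computable from the explicit description of $D_g$ given in \cite{PART1}, and the key lemma yields $\bigsqcup_{k\in\bN}D_g^{\pol,k}\subseteq S$. In particular only finitely many $D_g^{\pol,k}$ are nonempty and each is finite.

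The algorithm is then straightforward: compute $D_g$ and $S$ via \cite{PART1}; for each $d\in S$, apply Theorem \ref{introgrowth2} to decide whether $d\in D_g^\ex$ and, if not, to compute the unique $k$ with $d\in D_g^{\pol,k}$; place every $d\in D_g\setminus S$ directly into $D_g^\ex$. The main technical obstacle is the verification that an exponentially large proportion of $h\in H$ produce words $u_h$ which are genuinely cyclically reduced in $H*\gen{x}$ and retain the full degree $d_1+d_2$ after cyclic reduction; this is a generic-position argument that I expect to carry out using the Stallings-foldings description of $H\sgr F_n$ employed throughout the paper, exploiting that the Stallings graph of $H$ has exponential growth as soon as its rank is at least $2$.
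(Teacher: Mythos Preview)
Your approach is essentially the same as the paper's: prove that sums of two positive degrees in $D_g$ land in $D_g^\ex$ (the paper's Lemma~\ref{expdegrees}, which additionally obtains $d+d'+2k\in D_g^\ex$ by inserting an $\ol{x}^k\cdots x^k$ buffer), deduce that $D_g\setminus D_g^\ex$ is finite, and then invoke Theorem~\ref{introgrowth2} on the finitely many remaining degrees. The paper extracts its explicit cutoff ($2d_1$ or $3d_1$) from a single normal generator produced by \cite{PART1}, whereas you use the full computation of $D_g$ from \cite{PART1} to identify the non-decomposable degrees; both are valid and the difference is cosmetic. Your weaker lemma (no $+2k$) is compensated by this stronger use of \cite{PART1}.

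One small point: your final paragraph overcomplicates the generic-position verification. The words $u_h=w_1hw_2h^{-1}$ live in $H*\gen{x}$ with its natural basis $h_1,\dots,h_r,x$, and cyclic reducedness is a condition on adjacent letters in \emph{that} alphabet, not in $F_n$; no Stallings foldings are needed, and the phrase ``the Stallings graph of $H$ has exponential growth'' is not quite right (the graph is finite). Since $h$ is a reduced word in $\{h_1,\ol{h}_1,\dots,h_r,\ol{h}_r\}$, it can never cancel against an $x^{\pm1}$, and the constraints on its first and last letters each exclude at most two of the $2r\ge 4$ available letters, so exponentially many reduced $h$ of each length remain. This is exactly the elementary count the paper uses.
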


\

In Section \ref{SectionAlgorithms} we study the running time of the algorithms. We assume that we are working on a RAM machine; we assume that we can perform in time $O(1)$ the most basic operations, including sum and multiplication of two integer numbers, with this model of computation. According to \cite{HMU06}, given a context-free grammar, there is a linear-time algorithm to decide whether the corresponding language is non-empty; we describe such algorithm and some of its variants. We provide an unambiguous grammar for the language of an ideal $\fI_g$.

This allows us to estimate the running time of the algorithms of Theorems \ref{introcfreeIg2} and \ref{introcfreeJgd2}, providing polynomial bounds. Let $n$ be the number of generators of the ambient free group $F_n$, let $r$ be the rank of the subgroup $H$ and let $L$ be the maximum length of the words representing $h_1,...,h_r,g$, where $h_1,...,h_r$ is a fixed basis for $H$. Then we prove that the algorithm of Theorem \ref{introcfreeIg2} runs in time $O(nr^9L^3)$ and the one of Theorem \ref{introcfreeJgd2} in time $O(nr^{15}L^3d^6)$.

It is natural to wonder about the minimum possible degree $d_{\min}$ for a non-trivial equation in $\fI_g$. An algorithm for finding $d_{\min}$ had already been provided by the author in \cite{PART1}. Improving on this, we provide a polynomial-time such algorithm, very different in nature from the one of \cite{PART1}, and we give estimates on how big $d_{\min}$ can be.

\begin{introthm}[See Theorem \ref{dmin}]\label{introdmin}
There is a polynomial-time algorithm that computes the minimum possible degree for an equation in $\fI_g$.
\end{introthm}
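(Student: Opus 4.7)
The plan is to iterate the polynomial-time emptiness test for $\fJ_{g,d}$ provided by Theorem \ref{introcfreeJgd2} over $d=1,2,3,\ldots$, returning the first $d$ for which $\fJ_{g,d}\ne\emptyset$. Since a single emptiness test runs in time $O(nr^{15}L^3d^6)$, this iteration yields a polynomial-time algorithm as soon as we have an a priori polynomial upper bound $B(n,r,L)$ on $d_{\min}$. Before entering the loop I would use the algorithm of Theorem \ref{introcfreeIg2} to decide whether $\fI_g$ is trivial at all; if it is, no finite $d_{\min}$ exists and the algorithm reports this, otherwise we know $d_{\min}\le B$.

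The key step is therefore to establish the polynomial bound on $d_{\min}$. To produce a non-trivial equation of controlled degree, I would exploit the fact that if $g$ depends on $H$ then $\rank{\gen{H,g}}\le r$, via a Stallings-folding analysis in the spirit of \cite{PART1}: adding $g$ to the graph of $H$ must collapse some of the newly added structure, and from the identifications forced by these foldings one can read off an explicit equation $w(x)\in\fI_g$ whose degree is bounded by a polynomial in $r$ and $L$. This is the source of the sharp upper bound on $d_{\min}$ advertised in the abstract; for the present theorem one only needs the polynomiality, not the sharpness.

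Once the bound $d_{\min}\le B$ is in hand, looping through $d=1,\ldots,B$ and calling the emptiness test at each step costs
\[
\sum_{d=1}^{B} O(nr^{15}L^3d^6) \;=\; O(nr^{15}L^3 B^7),
\]
which is polynomial in $n,r,L$; the first $d$ at which the test succeeds is $d_{\min}$, and Theorem \ref{introcfreeJgd2} even supplies a witness equation realising this minimum. The main obstacle is the bound on $d_{\min}$ itself: the context-free machinery developed earlier only tells us whether equations of a prescribed degree exist, and supplies no intrinsic control on the minimum; that control must come from a combinatorial argument on Stallings graphs specific to the pair $(H,g)$. Making the bound sharp is the delicate part, but any polynomial estimate is enough to conclude the theorem.
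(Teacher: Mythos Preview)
Your approach has a genuine gap: the polynomial a priori bound $B(n,r,L)$ on $d_{\min}$ that your argument relies on does not exist. The paper itself proves this. Theorem~\ref{bounddmin} only gives $d_{\min}\le 2K_{r+1}L^{M_{r+1}}$, which is polynomial in $L$ but not in $r$, and Section~\ref{exampledmin} exhibits explicit pairs $(H,g)$ with $r=n+1$, $L\approx p^2$ and $d_{\min}\gtrsim p^n\approx L^{(r-1)/2}$. So no Stallings-folding argument can produce a bound polynomial jointly in $r$ and $L$; the minimum degree can be exponential in $r$. Your loop $d=1,\dots,B$ would then have exponentially many iterations, and the total cost $O(nr^{15}L^3B^7)$ is not polynomial.

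The paper's proof avoids iterating over $d$ altogether. It builds once the polynomial-size grammar for $\cL(\fI_g)$ from Proposition~\ref{languagekernel} and then runs the weighted-minimum variant of the emptiness algorithm (Proposition~\ref{explicit2}), with the size function $\sigma$ assigning weight $1$ to $x,\ol{x}$ and weight $0$ to the $h_i,\ol{h}_i$. This is essentially a Dijkstra-type computation on the grammar: in time $O(\norma{\cP}\log\abs{\cP})=O(nr^9L^3\log(nrL))$ it outputs the minimum $\sigma$-weight of any derivable word, which is exactly $d_{\min}$. The point is that even though $d_{\min}$ itself can be exponentially large as a number, it is computed arithmetically on a polynomial-size structure, without ever enumerating degrees one by one. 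That is the idea your proposal is missing.
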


The algorithm of Theorem \ref{introdmin} runs in time $O(nr^9L^3\log(nrL))$.

\begin{introthm}[See Theorem \ref{bounddmin}]\label{introbounddmin}
If it is non-trivial, the ideal $\fI_g$ contains a non-trivial equation of degree at most $2K_{r+1}L^{M_{r+1}}$.
\end{introthm}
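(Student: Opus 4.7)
The plan is to extract a short non-trivial equation directly from the context-free grammar $G$ for $\fI_g$ produced in the proof of Theorem~\ref{introcfreeIg} (and refined to an unambiguous grammar in Section~\ref{SectionAlgorithms}). The first step is bookkeeping: one tracks explicit polynomial-in-$L$ bounds, with exponents depending on $r+1$, for both the number of non-terminals of $G$ and the length of the right-hand side of each production. The construction of $G$ rests on a folded/Stallings-type graph associated to $H$ and $g$, whose vertex count is linear in $rL$; the non-terminals of $G$ are then indexed by small tuples of vertices and edges of this graph, and this is what produces the parameters $K_{r+1}$ and $M_{r+1}$ in the claim.

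The second step is to pass from $G$ to a context-free grammar $G'$ whose language is $\fI_g\setminus\{1\}$, which can be done without blowing up the size (by marking and forbidding the trivial derivation). Assuming $\fI_g$ is non-trivial, $L(G')$ is non-empty, and one invokes the standard pigeonhole argument on context-free derivations: if $L(G')\neq\emptyset$ then $L(G')$ contains a word admitting a derivation tree in which no non-terminal is repeated along any root-to-leaf path. The length of such a word is bounded by an explicit function of the grammar size, and this length in turn dominates the degree of the corresponding equation, since degree counts only occurrences of $x,x^{\pm 1}$ in the cyclic reduction.

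The main obstacle is the size-to-length conversion: the naive pigeonhole bound is exponential in the number of non-terminals, which in our case would be exponential in $rL$, while the claim asks for a bound polynomial in $L$ (for fixed $r$). Closing this gap will require exploiting the specific structure of the grammar from Theorem~\ref{introcfreeIg}, most likely via an inductive argument on $r$ that peels off one generator of $H$ at a time, each step contributing only a factor polynomial in $L$ and thereby packaging the rank-dependence into the constants $K_{r+1}$ and $M_{r+1}$. An alternative route, which should give the same quality of bound, is to run the argument on the folded-graph objects of \cite{PART1} directly: the minimum-degree equation corresponds to a short cycle in a graph constructed inductively from $H$, and bounding the girth of this graph recursively in $r$ yields the same estimate.
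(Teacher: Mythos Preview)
Your approach has a genuine gap at exactly the point you flag as the ``main obstacle''. The constants $K_{r+1}$ and $M_{r+1}$ in the statement are not generic grammar-size parameters that you are free to choose: they are the specific constants from \cite{Ventura} governing the norm of the inverse of an automorphism of a free group of rank $r+1$ (see Proposition~\ref{inversenorm}). Any proof of the stated bound must, one way or another, recover a result of that strength, and neither the pigeonhole argument on derivation trees nor an unspecified ``peel off one generator at a time'' induction does so. Note also that Section~\ref{exampledmin} exhibits ideals with $d_{\min}$ of order roughly $L^{(r-1)/2}$, so the exponent genuinely must grow with $r$; this is a phenomenon about free-group automorphisms, not about the combinatorics of the grammar, and the grammar of Proposition~\ref{languagekernel} has size $O(nr^3L^3)$ regardless of how large $d_{\min}$ turns out to be.

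The paper's proof does not touch the context-free grammar at all. It combines two external inputs. First, from \cite{PART1}, there is a basis $c_1,\dots,c_{r+1}$ of $H*\gen{x}$ in which each of $h_1,\dots,h_r,x$ has length at most $L$ and in which $\fI_g$ is normally generated by words of length at most $2$. Second, one applies the inverse-automorphism bound of \cite{Ventura} to the change-of-basis automorphism $\psi$ with $\psi(c_i)=h_i$, $\psi(c_{r+1})=x$: up to an inner automorphism $\rho$, each $c_i$ then has length at most $K_{r+1}L^{M_{r+1}}$ when written in the basis $h_1,\dots,h_r,x$. Since $\fI_g$ is normal, a length-$2$ normal generator in the $c_i$, after applying $\rho$, is still in $\fI_g$ and has length (hence degree) at most $2K_{r+1}L^{M_{r+1}}$ in $h_1,\dots,h_r,x$. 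Your inductive sketch would essentially have to reprove Corollary~4.6 of \cite{Ventura} to reach the same conclusion.
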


In the above statement, the constants $K_{r+1},M_{r+1}$, which only depend on $r$, are taken from \cite{LSV15}; in particular, the above estimate is polynomial in $L$, but not in $r$. We show by means of an example that $d_{\min}$ can be exponential in $r$, so that no completely polynomial bound can be given on $d_{\min}$.
 
\

In Section \ref{SectionExamples} we give examples showing that several kinds of growth can take place for $\rho_{g,d}$. In Section \ref{examplecyclic} we deal with the case where $\rank{H}=1$, which was excluded from Theorem \ref{introDgpolk}. In Example \ref{example46} we have that $\rho_{g,4}(M)$ is constant for all $M$ big enough. Example \ref{exampledmin} shows that $d_{\min}$ can grow exponentially in the rank $r$ of the subgroup $H$, showing that the bound of Theorem \ref{introbounddmin} is sharp.

\

In Section \ref{SectionMultivariate} we show how several of the results can be generalized from the one-variable to the multivariate case.

\section*{Acknowledgements}

The author would like to thank his supervisor M. R. Bridson for the suggestion of looking at regular languages and automata, and for all the useful discussion while working on the present work.

\section{Context-free languages and free groups}\label{SectionCfree}

In this section we introduce the notions of regular language and of context-free language, which are widely used in computer science; see for example \cite{HMU06} for an exhaustive introduction to the topic. We also discuss the notion of context-free subset of a free group.

Let $\cA$ be a finite set, which we call the \textbf{alphabet}; the elements of $\cA$ will be called \textbf{letters}. The free monoid over $\cA$ will be denoted $\cA^*$; the elements of $\cA^*$ will be called \textbf{words}. The length of a word $w\in\cA^*$, denoted $l(w)$, is the number of letters in the word. A \textbf{language} is any subset $\cL\subseteq\cA^*$.

%
%

\subsection{Regular languages}\label{SectionRegularLanguages}

\begin{mydef}
A \textbf{finite automaton} is a quadruple $(Q,\delta,Q_0,Q_f)$ where:

(i) $Q$ is a finite set, whose elements are called \textbf{states} of the automaton.

(ii) $\delta:Q\times\cA\rar\wp(Q)$ is a function, called \textbf{transition function}.

(iii) $Q_0$ is a subset of $Q$, whose elements are called \textbf{initial states}.

(iv) $Q_f$ is a subset of $Q$, whose elements are called \textbf{final states}.
\end{mydef}

The transition function tells us that, if we are at a state $q\in Q$ and we read a letter $c\in\cA$, then we are allowed to move to any of the states in the set $\delta(q,c)\subseteq Q$. More generally, given a subset of states $T\subseteq Q$ and a word $w\in\cA^*$, define $\delta(T,w)$ to be the subset of states to which we can move by reading $w$ from a state in $T$; to be formal, if $w=w_1...w_{l(w)}$, then we define inductively $T_0=T$ and $T_{i+1}=\bigcup_{q\in T_i}\delta(q,w_{i+1})$, and we define $\delta(T,w)=T_{l(w)}$.

We say that the automaton \textbf{accepts} a word $w$ if the intersection of $\delta(Q_0,w)$ and $Q_f$ is nonempty; this means that there is at least one way to start at one of the initial states and to move around the automaton reading $w$ and ending up in a final state.

\begin{mydef}
A finite automaton $(Q,\delta,Q_0,Q_f)$ is called \textbf{deterministic} if it satisfies:

(i) $\delta(q,a)$ is a singleton for each $q\in Q$ and $a\in\cA$.

(ii) $Q_0$ is a singleton.

For a deterministic finite automaton we call $q_0$ the unique initial state, i.e. $Q_0=\{q_0\}$. With an abuse of notation, we also consider the transition function as a map $\delta:Q\times\cA\rar Q$.
\end{mydef}

Condition (ii) means that there is a unique initial state; condition (i) means that from each state and for each letter that we read, we can jump on exactly one state. We don't require any condition on the set $Q_f$.

\begin{myprop}
Let $\cL\subseteq\cA^*$ be a language. Then the following are equivalent:

(i) There is a finite automaton such that $w\in\cL$ if and only if the automaton accepts $w$.

(ii) There is a deterministic finite automaton such that $w\in\cL$ if and only if the automaton accepts $w$.

In this case $\cL$ is called \textbf{regular language}.
\end{myprop}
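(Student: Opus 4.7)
The implication (ii) $\Rar$ (i) is immediate: a deterministic finite automaton is in particular a finite automaton, since the conditions on $Q_0$ being a singleton and $\delta(q,a)$ being a singleton are special cases of the conditions $Q_0\subseteq Q$ and $\delta(q,a)\subseteq Q$ in the general definition. So given a DFA accepting $\cL$, the same data viewed as a general finite automaton still accepts $\cL$.

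The content is in the implication (i) $\Rar$ (ii), which I plan to prove by the classical subset (powerset) construction. Given a finite automaton $(Q,\delta,Q_0,Q_f)$ accepting $\cL$, I would define a deterministic finite automaton $(Q',\delta',q'_0,Q'_f)$ as follows: take $Q':=\wp(Q)$, take the single initial state $q'_0:=Q_0\in Q'$, define $\delta':Q'\times\cA\rar Q'$ by
$$\delta'(T,a):=\bigcup_{q\in T}\delta(q,a),$$
and let $Q'_f:=\{T\in Q':T\cap Q_f\not=\emptyset\}$. Since $Q$ is finite, $Q'=\wp(Q)$ is finite, so this is a legitimate deterministic finite automaton.

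The key claim is that for every word $w\in\cA^*$ one has $\delta'(q'_0,w)=\delta(Q_0,w)$, where the left-hand side uses the (single-valued) transition of the DFA iterated along $w$ and the right-hand side is the subset of $Q$ defined in the excerpt for the NFA. This I would prove by a straightforward induction on the length $l(w)$, with the base case $w=\varepsilon$ giving $\delta'(q'_0,\varepsilon)=Q_0=\delta(Q_0,\varepsilon)$, and the inductive step reducing to the defining identity $\delta'(T,a)=\bigcup_{q\in T}\delta(q,a)$, which is exactly the step used in the definition of $\delta(T,w)$ in the excerpt. From the claim, the DFA accepts $w$ iff $\delta'(q'_0,w)\in Q'_f$, iff $\delta(Q_0,w)\cap Q_f\not=\emptyset$, iff the original NFA accepts $w$. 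Hence the DFA accepts exactly the language $\cL$.

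There is no real obstacle here: the construction is classical (it appears in \cite{Ullman}) and the verification is an elementary induction. The only mild subtlety worth mentioning is the exponential blow-up in the number of states, from $|Q|$ to $2^{|Q|}$; this will matter for later complexity statements, but is not needed for the equivalence itself.
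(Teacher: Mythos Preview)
Your proof is correct and is exactly the standard subset construction; the paper itself does not give an argument but simply cites Theorem 2.12 of \cite{Ullman}, which is precisely this construction. So your proposal matches the intended proof.
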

\begin{proof}
See Theorem 2.12 of \cite{HMU06}.
\end{proof}

\begin{myprop}
Let $\cL,\cL'$ be regular languages. Then we have the following:

(i) $\cL\cup\cL'$ is a regular language.

(ii) $\cL\cap\cL'$ is a regular language.

(iii) The complement $\cL^c$ of $\cL$ is a regular language.
\end{myprop}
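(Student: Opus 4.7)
The plan is to prove each of the three closure properties separately, exploiting the equivalence between finite automata and deterministic finite automata (from the previous proposition) to pick the most convenient model for each construction.

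For (i), I would use the nondeterministic model. Given finite automata $(Q,\delta,Q_0,Q_f)$ and $(Q',\delta',Q'_0,Q'_f)$ accepting $\cL$ and $\cL'$ respectively, I would take the disjoint union of the two state sets $Q\sqcup Q'$, keep both transition functions (extended by $\delta(q,a)=\emptyset$ when $q$ is in the ``wrong'' component), and declare the set of initial states to be $Q_0\cup Q'_0$ and the set of final states to be $Q_f\cup Q'_f$. A word is accepted precisely when it is accepted by one of the two original automata, giving the language $\cL\cup\cL'$.

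For (ii), I would switch to the deterministic model and use the classical product construction. Given deterministic automata $(Q,\delta,q_0,Q_f)$ and $(Q',\delta',q'_0,Q'_f)$ for $\cL$ and $\cL'$, build the automaton with state set $Q\times Q'$, transition function $\tilde\delta((q,q'),a)=(\delta(q,a),\delta'(q',a))$, initial state $(q_0,q'_0)$, and final states $Q_f\times Q'_f$. An easy induction on the length of a word shows that after reading $w$ we are in the state $(\delta(q_0,w),\delta'(q'_0,w))$, so the word is accepted exactly when it is accepted by both automata.

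For (iii), I would again use the deterministic model: given a deterministic finite automaton $(Q,\delta,q_0,Q_f)$ accepting $\cL$, simply swap accepting and non-accepting states, i.e.\ take the automaton $(Q,\delta,q_0,Q\setminus Q_f)$. Since the automaton is deterministic, for each word $w\in\cA^*$ the state $\delta(q_0,w)$ is uniquely defined, and it belongs to $Q\setminus Q_f$ if and only if it does not belong to $Q_f$; hence the new automaton accepts exactly the complement $\cL^c$.

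The only subtle step is (iii), which genuinely relies on determinism: if one tried to complement by swapping final and non-final states in a nondeterministic automaton, one would get the wrong language because a word can simultaneously admit both accepting and non-accepting runs. Everything else is routine, and the use of the previous proposition to move freely between the two models is what makes all three constructions painless.
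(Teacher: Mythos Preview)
Your proof is correct. Each of the three constructions is the standard one and works exactly as you describe; the remark about why determinism is essential for (iii) is also accurate.

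The paper takes a slightly different route. It does not give any constructions itself: for (ii) and (iii) it simply cites the relevant theorems in Hopcroft--Ullman, and then it obtains (i) from (ii) and (iii) via De~Morgan, writing $\cL\cup\cL'=((\cL)^c\cap(\cL')^c)^c$. So the only structural difference is in part (i): you build the union automaton directly by taking the disjoint union of two nondeterministic automata, whereas the paper derives closure under union as a formal consequence of closure under intersection and complement. Your approach is more self-contained and makes the underlying automata visible; the paper's approach is shorter and avoids introducing any new construction at all. Neither approach has any real advantage here beyond taste, since the result is entirely classical.
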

\begin{proof}
For part (iii) see Theorem 4.5 of \cite{HMU06}. For part (ii) see Theorem 4.8 of \cite{HMU06}. For part (i) use that $\cL\cup\cL'=((\cL)^c\cap(\cL')^c)^c$.
\end{proof}

\subsection{Context-free grammars}

\begin{mydef}
A \textbf{context-free grammar} is a triple $(\cN,\cP,S)$ where

(i) $\cN$ is a finite set, disjoint from $\cA$, whose elements are called \textbf{non-terminal symbols}.

(ii) $\cP$ is a finite set of \textbf{production rules} $(N,u)$ with $N\in\cN$ and $u\in(\cA\cup\cN)^*$.

(iii) $S\in\cN$ is a fixed element, called the \textbf{initial symbol}.
\end{mydef}

We will denote a production rule $(N,u)\in\cP$ also as $N\pro u$. If there are several production rules $(N,u),(N,u'),...\in\cP$ for the same non-terminal symbol $N$, we will sometimes write
$$N \quad \pro \quad u \quad | \quad u' \quad | \quad ...$$

Production rules allow us to substitute a non-terminal symbol with a sequence of both letters and non-terminal symbols, as follows. Suppose we are given a word $v\in(\cA\cup\cN)^*$ such that the $k$-th symbol of $v$ is $N\in\cN$, and suppose we are given a production rule $(N,u)\in\cP$. Then we take that specific occurrence of $N$ in $v$, and substitute it with $u$, in order to obtain a new word $v'$. In this case, we denote $v\pro_{k,(N,u)}v'$.

\begin{mydef}
Given a context-free grammar $(\cN,\cP,S)$, a \textbf{derivation} is a sequence $$v_0\pro_{k_0,(S,u_0)}v_1\pro_{k_1,(N_1,u_1)}v_2\pro...\pro_{k_{r-1},(N_{r-1},u_{r_1})}v_r$$ with $v_0,v_1,...,v_{r-1},v_r\in(\cA\cup\cN)^*$. In this case we say that $v_0$ \textbf{derives} $v_r$ and we denote $v_0\der v_r$.
\end{mydef}

Given a context-free grammar, we can produce a language; we start with the word given by the single initial symbol $S$, and we inductively apply substitutions, according to the given production rules, until we get a word which doesn't contain any non-terminal symbol. The set of words that can be obtained in this way is the language associated to our context-free grammar.

\begin{mydef}
Let $\cL\subseteq\cA^*$ be a language. Then $\cL$ is called \textbf{context-free language} if there is a context-free grammar $(\cN,\cP,S)$ such that $w\in\cL$ if and only if $S\der w$.
\end{mydef}

Context-free languages are closed under finite union, but not under finite intersection or complement; instead, they are closed under intersection with a regular language.

\begin{myprop}\label{cfreeunion}
If $\cL,\cL'$ are a context-free languages, then $\cL\cup\cL'$ is a context-free language. Moreover there is an algorithm that, given context-free grammars for $\cL,\cL'$, produces a context-free grammar for $\cL\cup\cL'$.
\end{myprop}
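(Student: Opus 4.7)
The plan is to give the standard textbook construction and verify that the resulting grammar does what we want. Let $(\cN,\cP,S)$ be a context-free grammar for $\cL$ and $(\cN',\cP',S')$ a context-free grammar for $\cL'$. First I would ensure that $\cN\cap\cN'=\emptyset$: if not, I rename the non-terminal symbols of the second grammar via a bijection (which does not alter the generated language), so I can assume the two sets of non-terminals are disjoint and also disjoint from the alphabet $\cA$.

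Next, I introduce a fresh non-terminal symbol $S''\notin\cN\cup\cN'\cup\cA$ and define
$$\cN''=\cN\cup\cN'\cup\{S''\},\qquad \cP''=\cP\cup\cP'\cup\{(S'',S),(S'',S')\}.$$
The candidate grammar is $(\cN'',\cP'',S'')$. The algorithm is then just this mechanical construction: rename to force disjointness, take unions of the non-terminal and production sets, and add the two new rules $S''\pro S$ and $S''\pro S'$. Clearly this runs in time linear in the size of the input grammars.

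The verification amounts to showing that the language of the new grammar equals $\cL\cup\cL'$. In one direction, if $w\in\cL$, then $S\der w$ in the original grammar, so $S''\pro S\der w$ in the new grammar, and similarly for $w\in\cL'$. In the other direction, suppose $S''\der w$ with $w\in\cA^*$. The first production applied must be either $S''\pro S$ or $S''\pro S'$, since these are the only rules with $S''$ on the left. Because the three sets $\cN,\cN'$ and $\{S''\}$ are pairwise disjoint and the production rules never mix non-terminals from $\cN$ with non-terminals from $\cN'$, once $S$ appears in a sentential form the remainder of the derivation stays within $(\cA\cup\cN)^*$ and only uses rules from $\cP$; analogously for $S'$.

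The only mildly delicate point is this last observation about non-mixing of derivations, which is really just the statement that a context-free derivation rewrites each non-terminal independently using rules whose left-hand sides are that non-terminal. I would write one short inductive argument on the length of the derivation to formalize it. There is no substantive obstacle here; the proof is essentially a bookkeeping exercise, and the result is Theorem 4.1 of \cite{Ullman}, to which I could also simply refer.
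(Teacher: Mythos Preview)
Your construction is correct and is exactly the standard argument; the paper itself does not spell it out but simply refers to Theorem~7.24 of \cite{Ullman} (you cite Theorem~4.1, which may be an edition discrepancy worth double-checking). Otherwise there is nothing to add.
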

\begin{proof}
See Theorem 7.24 of \cite{HMU06}.
\end{proof}

\begin{myprop}\label{cfreeintersection}
If $\cL$ is a context-free language and $\cR$ is a regular language, then $\cL\cap\cR$ is a context-free language. Moreover there is an algorithm that, given a context-free grammar for $\cL$ and a finite automaton for $\cL$, produces a context-free grammar for $\cL\cap\cR$.
\end{myprop}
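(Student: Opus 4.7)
The plan is to build a context-free grammar for $\cL\cap\cR$ directly from a context-free grammar $(\cN,\cP,S)$ for $\cL$ and a deterministic finite automaton $(Q,\delta,\{q_0\},Q_f)$ for $\cR$ (using the previous proposition to ensure determinism). The intuition is that I want the non-terminal symbols of the new grammar to record not just what they will eventually derive, but also which portion of a run on the automaton that derivation is responsible for.

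Concretely, I would introduce a new non-terminal symbol $[p,N,q]$ for each $N\in\cN$ and each pair $p,q\in Q$, with the intended meaning that $[p,N,q]$ derives exactly those words $w\in\cA^*$ such that $N\der w$ in the original grammar and $\delta(p,w)=q$ in the automaton. For each production $N\pro X_1X_2\cdots X_m\in\cP$ with $X_i\in\cA\cup\cN$, and for each choice of states $p_0,p_1,\ldots,p_m\in Q$, I add a production
$$[p_0,N,p_m]\pro Y_1Y_2\cdots Y_m$$
where $Y_i=[p_{i-1},X_i,p_i]$ if $X_i\in\cN$, and, if $X_i\in\cA$ is a letter $a$, then the production is included only when $\delta(p_{i-1},a)=p_i$, in which case $Y_i=a$ (note that for $X_i\in\cA$ there is only the one valid choice $p_i=\delta(p_{i-1},a)$, so the combinatorics collapse accordingly). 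Finally, I introduce a fresh initial symbol $S'$ with productions $S'\pro[q_0,S,q_f]$ for each $q_f\in Q_f$.

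The verification then splits into two inductions. One shows by induction on the length of a derivation in the new grammar that every word derived from $[p,N,q]$ satisfies both $N\der w$ and $\delta(p,w)=q$; this gives the inclusion $L(\text{new grammar})\subseteq\cL\cap\cR$. The other, by induction on the length of a derivation $N\der w$ in the original grammar, shows that for every $p\in Q$ one has $[p,N,\delta(p,w)]\der w$ in the new grammar; applied with $N=S$, $p=q_0$, and $w\in\cL\cap\cR$, this gives the reverse inclusion. Both inductions are routine once the bookkeeping is set up correctly.

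The one step that requires a little care is the handling of productions $N\pro\epsilon$ (if any) and the length of the right-hand sides: the construction above already covers the case $m=0$ (yielding productions $[p,N,p]\pro\epsilon$) and any $m\ge1$, so the only thing to watch is that the total number of new productions is at most $|\cP|\cdot|Q|^{M+1}$, where $M$ is the maximal right-hand side length in $\cP$, which is finite; hence the new grammar is a genuine context-free grammar. The construction is explicit, so it also yields the required algorithm.
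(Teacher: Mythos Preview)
Your proposal is correct: this is the standard triple construction, and it is essentially the same approach the paper takes. The paper's proof of this particular proposition is just a citation to \cite{Ullman}, but later (Proposition~\ref{cfreeintersection2}) the paper spells out exactly the construction you describe --- non-terminals $N_{p\to q}$ (your $[p,N,q]$), productions obtained by threading a sequence of states through the right-hand side, and a fresh start symbol with rules $S'\to S_{q_0\to q_f}$ --- together with the same pair of inductions (Lemma~\ref{lemma1intersection}) for correctness. The only cosmetic difference is that the paper groups consecutive terminal letters into blocks $w_0,\dots,w_k$ rather than handling them one at a time, and allows the automaton to be nondeterministic; neither affects the substance of the argument.
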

\begin{proof}
See Theorem 7.27 of \cite{HMU06}.
\end{proof}

Context-free languages are also closed under inverse image. Let $\cA,\cB$ be two alphabets and let $\psi:\cA\rar\cB^*$ be a set map. Then there is a unique extension $\psi:\cA^*\rar\cB^*$ that preserves concatenations (which we still denote by $\psi$ with an abuse of notation).

\begin{myprop}\label{cfreepreimage}
If $\cL\subseteq\cB^*$ is a context-free language, then $\psi^{-1}(\cL)\subseteq\cA^*$ is a context-free language. Moreover there is an algorithm that, given a context-free grammar for $\cL$ and the map $\psi$, produces s context-free grammar for $\psi^{-1}(\cL)$.
\end{myprop}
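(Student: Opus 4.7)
The plan is to use the equivalent characterization of context-free languages as the languages accepted by pushdown automata (PDAs); this equivalence, together with effective conversions in both directions, is standard, see \cite{Ullman}. I would take a PDA $M$ that accepts $\cL\subseteq\cB^*$, construct a PDA $M'$ over input alphabet $\cA$ that accepts $\psi^{-1}(\cL)$, and then convert $M'$ back to a context-free grammar, giving both the abstract closure statement and the algorithm.

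The construction of $M'$ would simulate $M$ as follows: whenever $M'$ reads a single input letter $a\in\cA$, it internally runs $M$ on the word $\psi(a)\in\cB^*$, updating its own stack exactly as $M$ would. To make this a legal PDA, I would enlarge the state space by pairing each state $q$ of $M$ with a progress pointer $(a,i)$, where $a\in\cA$ and $0\le i\le|\psi(a)|$, recording that the current simulation is running on the word $\psi(a)$ and has already consumed the first $i$ of its letters. From a ``ready'' state (one carrying no pointer), reading an input letter $a\in\cA$ moves $M'$ to the state with pointer $(a,0)$ without touching the stack; after that, $M'$ takes $|\psi(a)|$ consecutive $\epsilon$-moves, each simulating the corresponding transition of $M$ on the next letter of $\psi(a)$ and adjusting the stack accordingly; when the pointer reaches $(a,|\psi(a)|)$, $M'$ takes one final $\epsilon$-move back to a ready state carrying the new $M$-state. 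Acceptance is inherited from $M$ via the final-state condition. A straightforward induction on the length of the input word $w\in\cA^*$ then gives $L(M')=\psi^{-1}(\cL)$.

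The main subtlety is the handling of letters $a\in\cA$ with $\psi(a)=\epsilon$, for which the simulated run of $M$ takes zero steps: such a letter must still be consumed as input, but must not affect the stack or the internal $M$-state. This is absorbed cleanly by the construction above, since when $|\psi(a)|=0$ the only in-progress pointer is $(a,0)$ and the transition out of it is an $\epsilon$-move straight back to the ready tier with the same underlying $M$-state. Effectiveness, and polynomial control on the size of $M'$ (which will be relevant for the time-complexity estimates in Section \ref{SectionAlgorithms}), are both clear from the construction: the number of states of $M'$ is $O(|Q|\cdot\sum_{a\in\cA}(1+|\psi(a)|))$, and the standard translations between context-free grammars and PDAs preserve polynomial size.
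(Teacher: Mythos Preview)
Your argument is correct and is essentially the standard proof that the paper defers to: the paper's own proof is just a citation to Theorem~7.30 of \cite{Ullman}, and (as the paper confirms in the Remark immediately following Proposition~\ref{nonempty}) the construction there proceeds exactly as you describe, by converting the grammar to a push-down automaton, building a new PDA that simulates the original on $\psi(a)$ for each input letter, and converting back.

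One small caveat: your parenthetical remark that the polynomial size control on $M'$ ``will be relevant for the time-complexity estimates in Section~\ref{SectionAlgorithms}'' is slightly off-target. The paper explicitly \emph{avoids} the PDA route when it comes to the complexity analysis; in Section~\ref{SectionAlgorithms} it instead builds grammars directly (Proposition~\ref{languagekernel}) precisely because the generic grammar--PDA--grammar round trip, while polynomial, does not give sharp enough control and loses unambiguity. So your construction is fine for the present proposition, but you should not expect it to feed into the later running-time bounds.
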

\begin{proof}
See Theorem 7.30 of \cite{HMU06}.
\end{proof}

The most important property that we want to test about context-free grammar is whether the corresponding context-free language is non-empty.

\begin{myprop}\label{nonempty}
There is an algorithm that, given a context-free grammar, determines whether the corresponding language is non-empty and, if so, produces a word in the language.
\end{myprop}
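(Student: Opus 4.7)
The plan is to identify, inductively, the non-terminal symbols $N\in\cN$ that are \emph{productive}, meaning $N\der w$ for some $w\in\cA^*$. The algorithm computes this set in passes. At the first pass, mark every $N\in\cN$ that admits a production rule $(N,u)\in\cP$ with $u\in\cA^*$ (i.e., whose right-hand side contains only terminals; this allows $u$ to be the empty word). At each subsequent pass, mark any $N\in\cN$ that admits a rule $(N,u)\in\cP$ for which every non-terminal occurring in $u$ has already been marked. Stop as soon as a pass produces no new marked symbols. Since the set of marked non-terminals is monotonically increasing and bounded by $|\cN|$, the procedure terminates in at most $|\cN|$ passes; each pass scans the finite set $\cP$, so the entire procedure is effective.

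I claim that the language $\cL$ is non-empty if and only if the initial symbol $S$ is productive. One direction is immediate: if $N$ is marked at pass $k$, then by induction on $k$ one builds a derivation $N\der w$ with $w\in\cA^*$, using that all non-terminals appearing in the right-hand side chosen to mark $N$ were marked at some pass strictly less than $k$ and so already admit such derivations. Conversely, if $N\der w$ with $w\in\cA^*$, then a straightforward induction on the length of the derivation shows that $N$ must be marked after sufficiently many passes.

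To produce an explicit word when $\cL\not=\emptyset$, attach to each marked non-terminal $N$ a \emph{witness} production rule $(N,u_N)\in\cP$ used when $N$ was first marked (any such rule works). Starting from the word $S$, repeatedly replace the leftmost remaining non-terminal $N$ by $u_N$. Since the non-terminals occurring in $u_N$ are marked strictly earlier than $N$, this recursion terminates, producing a word $w\in\cA^*$ that by construction satisfies $S\der w$.

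This is a standard textbook result (see for instance Theorem 7.16 of \cite{Ullman}), so in the paper I would simply refer to it rather than giving a full proof. There is no real obstacle here; the only point that requires any care is the stratification of productive non-terminals by the pass at which they become marked, which is what simultaneously justifies correctness and guarantees termination of the witness-unfolding step.
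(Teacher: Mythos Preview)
Your proposal is correct and matches the paper's treatment: the paper simply cites Section~7.4.3 of \cite{Ullman} for this proposition, and the marking-by-passes algorithm you sketch is exactly the standard one described there. The paper later (Proposition~\ref{nonempty2}) refines this to a linear-time implementation via a queue, but for the present statement your level of detail is already more than what the paper provides.
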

\begin{proof}
See Section 7.4.3 of \cite{HMU06}.
\end{proof}

\begin{myrmk}
The constructions described in \cite{HMU06} for the proofs of Propositions \ref{cfreeintersection} and \ref{cfreepreimage} are indeed algorithmic, but require the notion of push-down automaton, and involve a process of conversion from context-free grammar to push-down automaton and vice-versa. Later, in Section \ref{SectionAlgorithms}, we will describe constructions, for those grammars specific for our needs, that don't require the notion of push-down automaton; our aim is to optimize the algorithms about equations in order to make them run faster. Section \ref{SectionAlgorithms} also includes a discussion about the algorithm of Proposition \ref{nonempty}.
\end{myrmk}

\subsection{Context-free languages and free groups}\label{Subsectioneta}

Let $F_n$ be a free group with basis $a_1,...,a_n$; we denote with $\ol{w}\in F_n$ the inverse of an element $w\in F_n$. Consider the alphabet $\cA=\{a_1,...,a_n,\ol{a}_1,...,\ol{a}_n\}$ and the set $\cA^*$ of finite words over this alphabet. There is a natural projection map $\eta:\cA^*\rar F_n$ that sends each word in $\cA^*$ to the element of $F_n$ represented by that word.

\begin{mydef}
Let $W\subseteq F_n$ be any subset. Define the language $\cL(W):=\eta^{-1}(W)\subseteq\cA^*$. This means that $\cL(W)$ is the set of all (reduced and unreduced) words that represent an element of $W$.
\end{mydef}

\begin{mydef}\label{cfreesubset}
Let $W\subseteq F_n$ be any subset. We say that $W$ is \textbf{context-free} if $\cL(W)$ is a context-free language.
\end{mydef}

\begin{myrmk}
As it is stated, the above definition seems to depend on the chosen basis for $F_n$. As we will always work with a fixed basis for $F_n$, this would not be an issue for us. However, we still prefer to point out that the above definition is independent on the chosen basis of $F_n$. To be precise, let $g_1,...,g_k$ be any finite set of generators for $F_n$, and let $\cL_{g_1,...,g_k}(W)$ be the set of words in $g_1,...,g_k,\ol{g}_1,...,\ol{g}_k$ that represent an element of $W$: then it easily follows from Proposition \ref{cfreepreimage} that $\cL_{g_1,...,g_k}(W)$ is context-free if and only if $\cL(W)$ is context-free.
\end{myrmk}

\begin{mylemma}\label{trivial}
The trivial subset $\{1\}\subseteq F_n$ is context-free.
\end{mylemma}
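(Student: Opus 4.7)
The plan is to exhibit an explicit context-free grammar whose language is exactly $\cL(\{1\})$, i.e.\ the set of all (not necessarily reduced) words over $\cA$ that represent the identity in $F_n$. This is essentially the classical Dyck-language construction adapted to a two-sided alphabet.

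Concretely, I would introduce a single non-terminal symbol $S$, take $S$ as the initial symbol, and use the production rules
$$S \pro \epsilon, \qquad S \pro a\, S\, \ol{a}\, S \quad \text{for every letter } a\in\cA,$$
where $\ol{a}$ denotes the formal inverse letter (so $\ol{\ol{a}_i}=a_i$). I would then prove by induction that $S\der w$ if and only if $w\in\cA^*$ satisfies $\eta(w)=1$ in $F_n$.

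The "$\Leftarrow$" direction is immediate: every production rule either produces the empty word or inserts an $a\ol{a}$ pair and two subwords, all of which represent $1$ by induction on the length of the derivation. For the "$\Rightarrow$" direction, I would induct on the length of $w$. The base case $w=\epsilon$ uses the rule $S\pro\epsilon$. For the inductive step, write $w=c_1 c_2\cdots c_m$ with $c_i\in\cA$ and $\eta(w)=1$. The key combinatorial claim is that there exists an index $k\in\{2,\dots,m\}$ such that $c_k=\ol{c_1}$, the subword $c_2\cdots c_{k-1}$ represents $1$, and the subword $c_{k+1}\cdots c_m$ represents $1$. Applying the production $S\pro c_1 S\, \ol{c_1}\, S$ and invoking the inductive hypothesis on the two strictly shorter subwords then completes the derivation of $w$.

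The only real obstacle is the splitting claim above. I would justify it by tracking the free reduction of $w$: since $\eta(w)=1$, the first letter $c_1$ must eventually be cancelled during some sequence of free reductions, and the letter $c_k$ that cancels it is uniquely determined once one fixes a reduction strategy that always cancels leftmost adjacent inverse pairs. The segment strictly between positions $1$ and $k$ must itself reduce to $1$ (otherwise $c_1$ could not reach $c_k$), and hence the segment after position $k$ also reduces to $1$. This is a standard fact about free monoids with involution, and can alternatively be phrased as: the set of words representing $1$ is the set of "balanced" words in a two-sided Dyck sense. Given this splitting lemma, the grammar above witnesses that $\cL(\{1\})$ is context-free, proving the statement.
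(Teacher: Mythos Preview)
Your approach is essentially the same as the paper's: both exhibit an explicit Dyck-type grammar for $\cL(\{1\})$, and your rules $S\pro\epsilon\mid aS\ol{a}S$ are a minor variant of the paper's $T\pro\epsilon\mid TT\mid a_iT\ol{a}_i\mid \ol{a}_iTa_i$ (you have absorbed the $TT$ rule into the right-hand $S$), while you additionally sketch the correctness argument that the paper omits. One small slip: your ``$\Leftarrow$'' and ``$\Rightarrow$'' labels are swapped relative to the biconditional ``$S\der w$ iff $\eta(w)=1$'' as you stated it.
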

\begin{proof}
Consider the grammar given by a single non-terminal symbol $T$ and production rules

\vspace{0.1cm}

\begin{tabular}{l l l p{0.5cm} l}
$T$ & $\pro$ & $\epsilon \quad | \quad TT \quad | \quad a_iT\ol{a}_i \quad | \quad \ol{a}_iTa_i$ && for $i=1,...,n$. \\
\end{tabular}
\end{proof}

It is natural to ask what happens if, instead of considering the language $\cL(W)$ of all words representing an element of $W$, we consider the language of all \textit{reduced} words representing an element of $W$. This corresponds essentially to taking the intersection with a regular language.

\begin{myprop}\label{cfreeequivdef}
Let $W$ be any subset of $F_n$. The the following are equivalent:

(i) The language $\cL(W)$ of all words representing elements of $W$ is context-free.

(ii) The language $\rlan{W}$ of all reduced words representing elements of $W$ is context-free.
\end{myprop}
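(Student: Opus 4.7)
The plan is to prove the two implications separately using the closure properties recalled in this section.

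For (i) $\Rar$ (ii), I would exhibit the set $\mathrm{Red}\subseteq\cA^*$ of reduced words (those containing no subword $a_i\ol{a}_i$ or $\ol{a}_ia_i$) as a regular language. A deterministic finite automaton recognising $\mathrm{Red}$ can be built with one state for each letter in $\cA$ plus an initial state, where each state remembers the last letter read and every transition moves to a rejecting sink state exactly when the current letter is the inverse of the previous one. Since $\rlan{W}=\cL(W)\cap\mathrm{Red}$, Proposition \ref{cfreeintersection} gives that $\rlan{W}$ is context-free.

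For (ii) $\Rar$ (i), the key step is the structural claim that a word $w\in\cA^*$ lies in $\cL(W)$ if and only if it can be written as
\[w=t_0c_1t_1c_2t_2\cdots c_kt_k\]
where $c_1c_2\cdots c_k\in\rlan{W}$ is the reduction of $w$ and each $t_i\in\cL(\{1\})$. The inclusion $\supseteq$ is immediate since each $t_i$ evaluates to the identity. For $\subseteq$, I would induct on the number of cancellations required to reduce $w$. If $w$ is reduced there is nothing to prove. Otherwise write $w=u\,a_j\ol{a}_j\,v$ for some letter $a_j$, apply the inductive hypothesis to $uv$ to decompose it in the desired form, and observe that the pair $a_j\ol{a}_j$ we re-insert sits inside one of the existing trivial blocks $t_i$ (or between two adjacent $c$'s, which amounts to inserting into the empty block between them); since inserting a cancelling pair into a trivial word preserves triviality, the decomposition of $w$ is of the required form.

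Given the claim, I would construct a context-free grammar for $\cL(W)$ as follows: take a grammar $(\cN,\cP,S)$ for $\rlan{W}$, together with the grammar for $\cL(\{1\})$ from Lemma \ref{trivial} (with start symbol $T$), renaming non-terminals so that their sets are disjoint. In every production of $\cP$, replace each occurrence of a terminal letter $a\in\cA$ on the right-hand side with the string $aT$, and introduce a new start symbol $S'$ with the single rule $S'\pro TS$. A derivation from $S'$ then produces exactly words of the form $t_0c_1t_1\cdots c_kt_k$ with $c_1\cdots c_k$ generated by the original grammar and each $t_i$ generated by $T$, matching the structural claim; hence the new grammar generates precisely $\cL(W)$. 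The main obstacle is the $\subseteq$ direction of the structural claim, where one must verify carefully that newly inserted cancelling pairs are always absorbed into the existing trivial blocks and never straddle a letter of the reduction, which ultimately relies on the fact that each $c_j$ is a single letter and so admits no internal insertion position.
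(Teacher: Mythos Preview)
Your proof is correct and follows essentially the same approach as the paper: intersect with the regular language of reduced words for (i)$\Rightarrow$(ii), and splice the trivial-word non-terminal $T$ into a grammar for $\rlan{W}$ for (ii)$\Rightarrow$(i). The only cosmetic differences are that the paper inserts $T$ around every symbol on each right-hand side (terminals and non-terminals alike) rather than only after terminals plus a fresh start rule $S'\pro TS$, and the paper leaves implicit the decomposition $w=t_0c_1t_1\cdots c_kt_k$ that you spell out and prove by induction.
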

\begin{proof}
(i)$\Rar$(ii). We take the context-free language $\cL(W)$ and we intersect it with the regular language of all words in $\cA^*$ which are reduced. We obtain the language $\rlan{W}$ which, by Proposition \ref{cfreeintersection}, is context-free.

(ii)$\Rar$(i). Suppose $(\cN,\cP,S)$ is a context-free grammar generating $\rlan{W}$ and let $(\cN_1,\cP_1,T)$ be a grammar for the language $\cL(1)$, for example the one described in Lemma \ref{trivial}. For every production rule $$N\pro u_1...u_l$$ in $\cP$ with $u_1,...,u_l\in(\cA\cup\cN)$, we define a production rule $$N\pro Tu_1Tu_2T...u_{l-1}Tu_lT$$ and we call $\cP'$ the set of all these new production rules. Then the context-free grammar $(\cN\cup\cN_1,\cP'\cup\cP_1,S)$ generates the language $\cL(W)$, as desired.
\end{proof}

In what follows, we will need to deal with cyclically reduced words.

\begin{mydef}
Let $W$ be any subset of $F_n$. Define $\clan{W}\subseteq\cA^*$ to be the language of all cyclically reduced words that represent an element of $W$.
\end{mydef}

If $W$ is a context-free subset of $F_n$, then $\clan{W}$ is a context-free language: this follows from Proposition \ref{cfreeintersection}, intersecting $\rlan{W}$ with the regular language of all words whose first and last letter are not inverse of each other.

\section{Context-free languages and equations over a free group}\label{SectionCfreeIdeals}

Let $F_n$ be a free group on $n$ generators $a_1,...,a_n$. Let $H\sgr F_n$ be a finitely generated subgroup of rank $r$, and let $h_1,...,h_r$ be a basis for $H$. Let $g\in F_n$ be any element.

Let $\fI_g\nor H*\gen{x}$ be the ideal of the equations for $g$ over $H$. We here prove that $\fI_g$ is a context-free subset of $H*\gen{x}$: this provides an algorithm that tells whether $\fI_g$ is trivial or not, and in case also produces a non-trivial equation for $g$ over $H$.

We then turn our attention to the subsets $\fJ_{g,d}\subseteq\fI_g$ of the equations of a certain fixed degree $d\ge1$.
We prove that these subsets of $H*\gen{x}$ are context-free too: this gives an algorithm that tells us whether there is a non-trivial equation of degree $d$.

Finally, we provide an algorithm to find the minimum possible degree for an equation in $\fI_g$.

\subsection{Ideals of equations are context-free}

\begin{mythm}\label{cfreeIg}
The set $\fI_g$ is context-free as a subset of $H*\gen{x}$.
\end{mythm}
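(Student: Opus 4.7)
The plan is to realize $\fI_g$ as the preimage of the trivial element under a substitution homomorphism, and then invoke the closure properties from Section \ref{SectionCfree}.

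Concretely, I would fix the basis $h_1,\dots,h_r,x$ for the free group $H*\gen{x}$, and work with the alphabet $\cA_H=\{h_1^{\pm 1},\dots,h_r^{\pm 1},x^{\pm 1}\}$. By the remark following Definition \ref{cfreesubset}, context-freeness of $\fI_g$ as a subset of $H*\gen{x}$ does not depend on the chosen basis, so it is enough to show that $\cL(\fI_g)\subseteq\cA_H^*$ is context-free. Regard each $h_i$ and $g$ as a fixed reduced word over the alphabet $\cA=\{a_1^{\pm 1},\dots,a_n^{\pm 1}\}$ of $F_n$, and define a set map $\psi:\cA_H\rar\cA^*$ by sending $h_i$ (respectively $x$) to the chosen word representing $h_i\in F_n$ (respectively the chosen word representing $g\in F_n$), and sending each inverse letter to the formal inverse of its image. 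Extend $\psi$ to a monoid homomorphism $\psi:\cA_H^*\rar\cA^*$.

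The key observation is that, by the universal property of the free product, the evaluation homomorphism $\varphi_g:H*\gen{x}\rar F_n$ agrees with $\psi$ at the level of represented group elements: for every $w\in\cA_H^*$, the element of $F_n$ represented by $\psi(w)$ is precisely $\varphi_g$ applied to the element of $H*\gen{x}$ represented by $w$. Hence $w$ represents an element of $\fI_g=\ker\varphi_g$ if and only if $\psi(w)$ represents the identity in $F_n$, i.e.
\[
\cL(\fI_g)=\psi^{-1}\bigl(\cL(\{1\})\bigr).
\]

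Now I would finish by a direct appeal to the closure properties already recorded. By Lemma \ref{trivial}, the trivial subset $\{1\}\subseteq F_n$ is context-free, so $\cL(\{1\})\subseteq\cA^*$ is a context-free language. By Proposition \ref{cfreepreimage}, the preimage $\psi^{-1}(\cL(\{1\}))$ is again a context-free language, which is exactly $\cL(\fI_g)$. Therefore $\fI_g$ is a context-free subset of $H*\gen{x}$, as required.

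There is no real obstacle: the entire content is the identification $\cL(\fI_g)=\psi^{-1}(\cL(\{1\}))$, which is immediate from the definition of $\varphi_g$ as the substitution $x\mapsto g$. The only mild point to be careful about is bookkeeping of inverses when extending $\psi$ from letters to words, so that $\psi$ commutes with the representation map $\eta$; this is why $\psi$ is defined on signed letters rather than on the basis alone. Once this is in place, Proposition \ref{cfreepreimage} does all the work, and one additionally obtains an \emph{algorithm} producing a context-free grammar for $\cL(\fI_g)$, which will be the starting point for the effective results of Theorems \ref{introcfreeIg2} and \ref{introcfreeJgd2}.
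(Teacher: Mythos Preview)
Your proof is correct and follows essentially the same approach as the paper: define the substitution map $\psi:\cB^*\rar\cA^*$ sending each generator of $H*\gen{x}$ to a word representing its image under $\varphi_g$, identify $\cL(\fI_g)=\psi^{-1}(\cL(1))$, and conclude via Lemma~\ref{trivial} and Proposition~\ref{cfreepreimage}. The paper's proof is slightly terser but the argument is identical.
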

\begin{proof}
Consider the alphabets $\cA=\{a_1,\ol{a}_1,...,a_n,\ol{a}_n\}$ and $\cB=\{h_1,\ol{h}_1,...,h_r,\ol{h}_r,x,\ol{x}\}$ where $h_1,...,h_r$ is a basis for $H$. Consider a map $\psi:\cB^*\rar\cA^*$ such that $\psi$ sends each element of $\cB$ to a word representing that element in the free group $F_n$ (and $x$ and $x^{-1}$ to words representing $g$ and $g^{-1}$ respectively), and such that $\psi$ preserves concatenations. By Lemma \ref{trivial} we have that $\cL(1)\subseteq\cA^*$ is context-free. By Proposition \ref{cfreepreimage} we obtain that $\psi^{-1}(\cL(1))\subseteq\cB^*$ is context-free. But $\psi^{-1}(\cL(1))=\cL(\fI_g)$ and the conclusion follows.
\end{proof}

\begin{myrmk}
Let $N$ be a normal subgroup of a finitely generated free group $F$. We point out that $N$ being context-free is the same as the quotient $F/N$ being virtually free. This is a result of D. E. Muller and P. E. Schupp; for more details see \cite{MS83}.
\end{myrmk}

\begin{mythm}\label{cfreeIg2}
There is an algorithm that, given $H\sgr F_n$ finitely generated and $g\in F_n$, tells us whether $\fI_g$ contains a non-trivial equation or not and, in case it does, produces a non-trivial equation in $\fI_g$.
\end{mythm}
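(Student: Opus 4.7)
The plan is to combine Theorem \ref{cfreeIg} with the emptiness test for context-free languages (Proposition \ref{nonempty}), together with a small twist to exclude the trivial equation $1\in H*\gen{x}$, which always belongs to $\fI_g$ and is therefore always witnessed by the empty word in $\cL(\fI_g)$.

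First, I would make the proof of Theorem \ref{cfreeIg} fully constructive. Starting from the explicit grammar for $\cL(1)\subseteq\cA^*$ given by Lemma \ref{trivial}, I apply the algorithmic version of Proposition \ref{cfreepreimage} to the substitution map $\psi:\cB^*\rar\cA^*$, where $\cB=\{h_1,\ol{h}_1,\dots,h_r,\ol{h}_r,x,\ol{x}\}$ and $\psi$ sends each $h_i$ and $x$ to a chosen $\cA^*$-representative. This produces, algorithmically, a context-free grammar whose language is $\psi^{-1}(\cL(1))=\cL(\fI_g)$.

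Second, I would restrict to representatives that cannot represent the identity of $H*\gen{x}$. Since $H*\gen{x}$ is itself a free group on $\cB$ (of rank $r+1$), a word $w\in\cB^*$ represents $1\in H*\gen{x}$ if and only if its free reduction over $\cB$ is empty. I therefore intersect $\cL(\fI_g)$ with the regular language $\cR\subseteq\cB^*$ of non-empty freely reduced words over $\cB$. By Proposition \ref{cfreeintersection}, which is algorithmic, this yields a context-free grammar $\cG$ whose language is precisely the set of non-empty reduced words in $\cB^*$ representing non-trivial elements of $\fI_g$.

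Finally, I would feed $\cG$ into the emptiness algorithm of Proposition \ref{nonempty}. If its language is empty, then $\fI_g=\{1\}$; otherwise the algorithm returns a witness $w\in\cB^*$, and $w$ reads off as a non-trivial equation in $\fI_g$. No step is a real obstacle: the content is the assembly of three existing algorithmic constructions, with the one piece of care being that the identity must be filtered out by intersecting with a \emph{regular} language (rather than by attempting to view $\fI_g\setminus\{1\}$ as a context-free subset, which would not in general be preserved under closure operations).
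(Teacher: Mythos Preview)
Your proposal is correct and follows essentially the same approach as the paper: construct a grammar for $\cL(\fI_g)$ via Lemma \ref{trivial} and Proposition \ref{cfreepreimage}, intersect with the regular language of non-empty reduced words over $\cB$ via Proposition \ref{cfreeintersection}, and apply the emptiness test of Proposition \ref{nonempty}. The paper's own proof is in fact slightly terser but matches your outline step for step, including the key observation that the trivial equation is filtered out by intersecting with a regular language.
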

\begin{proof}
We proceed as in the proof of Proposition \ref{cfreeIg}. Lemma \ref{trivial} provides an explicit grammar for the language $\cL(1)\subseteq\cA^*$. According to Proposition \ref{cfreepreimage}, we can algorithmically build a grammar for the language $\cL(\fI_g)$. It is easy to construct a finite automaton for the regular language $\cR$ of all non-trivial reduced words in $\cB^*$: by Proposition \ref{cfreeintersection} we can algorithmically obtain a context-free grammar for the language $\cL(\fI_g)\cap\cR$ of all non-trivial reduced words representing an equation in $\fI_g$. We now apply to this grammar the algorithm of Proposition \ref{nonempty}, and the conclusion follows.
\end{proof}

\subsection{Equations of a certain fixed degree}

Recall that the degree of an equation $w(x)\in H*\gen{x}$ is defined as the number of occurrences of $x$ and $\ol{x}$ in the cyclic reduction of $w(x)$. The cyclic reduction has to be taken with respect to the basis $h_1,...,h_r,x$ for $H*\gen{x}$, where $h_1,...,h_r$ is a basis for $H$. Notice that choosing a different basis for $H$ can change the cyclic reduction of an equation $w(x)$, but it will not change the number of occurrences of $x$ and $\ol{x}$, so that the degree of $w(x)$ is well defined. For $d\in\bN$, we denote by $\fJ_{g,d}$ the set of all equations in $\fI_g$ that have degree exactly $d$.

\begin{mythm}\label{cfreeJgd}
The set $\fJ_{g,d}$ is context-free as a subset of $H*\gen{x}$.
\end{mythm}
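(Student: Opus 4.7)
The plan is to follow the same strategy used in the proof of Theorem \ref{cfreeIg}: combine the known context-free set $\fI_g$ with a suitable regular constraint that enforces the degree, and then pass back to the full set of representatives via conjugacy saturation.

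First I would use Theorem \ref{cfreeIg} to obtain that $\cL(\fI_g)\subseteq\cB^*$ is context-free, where $\cB=\{h_1^{\pm 1},\ldots,h_r^{\pm 1},x^{\pm 1}\}$. Next I would build a regular language $R_d\subseteq\cB^*$ consisting of all reduced cyclically reduced words containing exactly $d$ occurrences of the letters $x$ and $\ol{x}$. A finite automaton achieves this by remembering the previous letter (to enforce reducedness), remembering the initial letter (to check cyclic reducedness upon reaching the end of input), and counting $x^{\pm 1}$'s with a cap at $d+1$. By Proposition \ref{cfreeintersection} the intersection $\cL(\fI_g)\cap R_d$ is then context-free.

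The key observation is that, viewing $H*\gen{x}$ as a free group of rank $r+1$ on the basis $h_1,\ldots,h_r,x$, the language $\cL(\fI_g)\cap R_d$ coincides with $\rlan{V}$, where $V\subseteq H*\gen{x}$ denotes the set of cyclically reduced elements of $\fJ_{g,d}$. Indeed, a reduced word $w\in\cB^*$ lies in $R_d$ precisely when $\eta(w)$ is cyclically reduced with degree equal to the number of $x^{\pm 1}$'s appearing in $w$, namely $d$; combined with membership in $\cL(\fI_g)$, this characterizes exactly the reduced representatives of $V$. Applying Proposition \ref{cfreeequivdef} to the free group $H*\gen{x}$ then gives that $V$ itself is a context-free subset of $H*\gen{x}$.

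Finally, since $\fI_g$ is normal in $H*\gen{x}$ and the degree is a conjugacy invariant, $\fJ_{g,d}$ is closed under conjugation; moreover every $w\in\fJ_{g,d}$ is conjugate to its cyclic reduction, which lies in $V$, so $\fJ_{g,d}$ equals the conjugacy saturation of $V$. Proposition \ref{cfreeconjsat} then yields that $\fJ_{g,d}$ is context-free, as required. The one point needing careful verification is the identification $\cL(\fI_g)\cap R_d=\rlan{V}$, which rests on the straightforward but essential fact that for a cyclically reduced element of $H*\gen{x}$ the degree equals the $x^{\pm 1}$-count in its (unique) reduced word.
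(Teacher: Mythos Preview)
Your proof is correct and follows essentially the same route as the paper's own argument: intersect $\cL(\fI_g)$ with the regular language of cyclically reduced words having exactly $d$ occurrences of $x^{\pm1}$, then apply Proposition \ref{cfreeconjsat}. You have simply made explicit the intermediate identification $\cL(\fI_g)\cap R_d=\rlan{V}$ and the appeal to Proposition \ref{cfreeequivdef} that the paper leaves implicit in the phrase ``the conclusion now follows from Proposition \ref{cfreeconjsat}''.
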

\begin{proof}
Let $h_1,...,h_r$ be a basis for $H$, and we work with the alphabet $\cB=\{h_1,\ol{h}_1,...,h_r,\ol{h}_r,x,\ol{x}\}$. By Theorem \ref{cfreeIg} we have that the language $\cL(\fI_g)$ is context-free. It is easy to construct a finite automaton for the language $\cR$ of all cyclically reduced words that contain exactly $d$ occurrences of $x$ and $\ol{x}$. By Proposition \ref{cfreeintersection} we obtain that the language $\clan{\fJ_{g,d}}=\cL(\fI_g)\cap\cR$ is context-free. Let $(\cN,\cP,S)$ be a context-free grammar for $\clan{\fJ_{g,d}}$ and let $\cP'$ be the set of production rules given by

\vspace{0.1cm}

\begin{tabular}{l l l p{0.5cm} l}
$S$ & $\pro$ & $xS\ol{x} \quad | \quad \ol{x}Sx \quad | \quad h_iS\ol{h}_i \quad | \quad \ol{h}_iSh_i$ && for $i=1,...,r$. \\
\end{tabular}

\vspace{0.1cm}

Let $\cL'$ be the language generated by the grammar $(\cN,\cP\cup\cP',S)$: we claim that $\rlan{\fJ_{g,d}}\subseteq\cL'\subseteq\cL(\fJ_{g,d})$. The second inclusion is trivial. For the first inclusion, suppose $w$ is a reduced word representing an equation of degree $d$: then we can write $w=uv\ol{u}$ where $v$ is a cyclically reduced word representing an equation of degree $d$, $u$ is a reduced word and $\ol{u}$ is the formal inverse of $u$. But then we can obtain a derivation $S\der uS\ol{u}$ using only production rules in $\cP'$, and a derivation $uS\ol{u}\der uv\ol{u}$ using only production rules in $\cP$. It follows that $w=uv\ol{u}$ belongs to $\cL'$.

Let now $\cR'$ be the regular language of all reduced words, and by Proposition \ref{cfreeintersection} we have that $\rlan{\fJ_{g,d}}=\cL'\cap\cR'$ is context-free. The conclusion follows from Proposition \ref{cfreeequivdef}.
\end{proof}

\begin{mythm}\label{cfreeJgd2}
There is an algorithm that, given $H\sgr F_n$ finitely generated and $g\in F_n$ and $d\in\bN$, tells us whether the set $\fJ_{g,d}$ is non-empty and, if so, produces an element in $\fJ_{g,d}$.
\end{mythm}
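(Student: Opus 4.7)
The plan is to make algorithmic the construction used in the proof of Theorem \ref{cfreeJgd}, and then to test emptiness with the algorithm of Proposition \ref{nonempty}. First I would run the algorithm of Theorem \ref{cfreeIg2} up to the point where it produces an explicit context-free grammar for the language $\cL(\fI_g)\subseteq\cB^*$; recall that this is obtained by taking the grammar for $\cL(1)$ given by Lemma \ref{trivial} and feeding it through the algorithm of Proposition \ref{cfreepreimage} via the map $\psi:\cB^*\rar\cA^*$ that replaces each generator of $H*\gen{x}$ by a word in $\cA^*$.

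Second, I would build a deterministic finite automaton for the regular language $\cR\subseteq\cB^*$ of \emph{cyclically reduced} words of degree exactly $d$. The states record the last letter read (to reject reductions), the first letter read (to reject non-cyclic reducedness at the end), and a counter in $\{0,1,\ldots,d+1\}$ of the occurrences of $x$ and $\ol{x}$ seen so far, saturating at $d+1$; a state is accepting when the counter equals $d$ and the pair (first letter, last letter) is admissible. This is a finite deterministic automaton, so $\cR$ is regular.

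Third, I would apply the algorithm of Proposition \ref{cfreeintersection} to the grammar for $\cL(\fI_g)$ and the automaton for $\cR$ to obtain a context-free grammar for $\cL(\fI_g)\cap\cR$. Since $\fI_g$ is the kernel of $\varphi_g$ and hence is closed under conjugation, a cyclic reduction of any element of $\fI_g$ still lies in $\fI_g$, and its degree equals the degree of the original element; consequently $\cL(\fI_g)\cap\cR$ is empty if and only if $\fJ_{g,d}$ is empty, and any word in $\cL(\fI_g)\cap\cR$ represents an element of $\fJ_{g,d}$. Finally I would invoke the algorithm of Proposition \ref{nonempty} on this grammar to decide emptiness and, in the non-empty case, to output a word, which is then read as the desired element of $\fJ_{g,d}$.

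The only subtlety is checking that the construction of the automaton for $\cR$ is genuinely algorithmic and finite, which follows from the bounded size of the state space ($O(nrd)$). The polynomial running time of the resulting procedure is not needed for the present existence statement and will be analyzed separately in Section \ref{SectionAlgorithms}.
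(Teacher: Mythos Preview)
Your proposal is correct and follows essentially the same approach as the paper's proof: build a grammar for $\cL(\fI_g)$, intersect with the regular language $\cR$ of cyclically reduced words containing exactly $d$ occurrences of $x,\ol{x}$, and apply Proposition~\ref{nonempty}. You in fact spell out more detail than the paper does, giving an explicit description of the automaton for $\cR$ and justifying why $\cL(\fI_g)\cap\cR=\emptyset$ iff $\fJ_{g,d}=\emptyset$; the only quibble is that the state-space bound should read $O(r^2d)$ rather than $O(nrd)$, since the alphabet $\cB$ has $2(r+1)$ letters and $n$ does not enter, but this does not affect the argument.
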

\begin{proof}
We proceed as in the proof of Theorem \ref{cfreeIg2}. In the proof of Theorem \ref{cfreeIg2} we proved that it is possible to algorithmically build a grammar for the language $\cL(\fI_g)$. It is quite easy to produce an algorithm that constructs a finite automaton for the regular language $\cR$ of all cyclically reduced words that contain exactly $d$ occurrences of $x$ and $\ol{x}$. By Proposition \ref{cfreeintersection} we obtain an algorithm to construct a grammar for the language $\cL(\fI_g)\cap\cR$. We apply the algorithm of Proposition \ref{nonempty} to this grammar and the conclusion follows.
\end{proof}

\section{Asymptotic growth rate of the number of equations}\label{SectionAsymptotic}

The aim of this section is to study the growth rate of the number of equations having $g$ as a solution up to a given length. We deal with this question both for the set $\fI_g$ of all equations, and for the set $\fJ_{g,d}$ of equations of a certain given degree $d$.

\begin{mydef}
Let $\rho:[0,+\infty)\rar[0,+\infty)$ be a non-decreasing function. We say that $\rho$ has

(i) \textbf{exponential growth} if $e^{\alpha M}\le \rho(M)\le e^{\beta M}$ for some $\alpha,\beta>0$ and for all $M$ big enough.

(ii) \textbf{polynomial growth of degree $k$}, where $k\ge0$ is an integer, if $\alpha M^k\le \rho(M)\le \beta M^k$ for some $\alpha,\beta>0$ and for all $M$ big enough.
\end{mydef}

Polynomial growth of degree $1$ is usually called \textbf{linear growth}.

\subsection{Growth rate of context-free languages}

Let $\cA$ be an alphabet and let $\cL\subseteq\cA^*$ be a language. The growth function is defined by
$$\rho_\cL(M)=\abs{\{w\in\cL : l(w)\le M\}}$$
We are interested in the growth of the function $\rho_\cL$ when $\cL$ is a context-free language. This has already been studied, for example in \cite{BG02} and in \cite{Inc01}, and the possibilities are described in full in \cite{GKRS08}; we restate here their result.

\begin{mythm}[See \cite{GKRS08}]\label{cfreegrowth}
Let $\cL$ be a context-free language. Then exactly one of the following takes place:

(i) The function $\rho_\cL$ has polynomial growth of degree $k$ for some $k\in\bN$.

(ii) The function $\rho_\cL$ has exponential growth.

Moreover, there is an algorithm that, given a context-free grammar for $\cL$, tells us (in polynomial time) which of the above cases takes place, and, in case (i), also tells us the degree $k$ of the growth.
\end{mythm}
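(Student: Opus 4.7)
The plan is to analyze the structure of the derivation trees of a context-free grammar for $\cL$, following the approach of \cite{Shallit}. First I would convert the given grammar into Chomsky normal form and remove non-productive and unreachable non-terminals; this preserves the language and changes $\rho_\cL$ only up to bounded constants. In CNF every derivation tree of a word of length $M$ has exactly $M$ leaves and $M-1$ binary internal nodes, so counting words reduces, up to a bounded ambiguity multiplier, to counting derivation trees of a given size.

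Next I would build a directed graph $\cG$ on the non-terminals with an edge $A\to B$ whenever $A\der \alpha B\beta$ for some $\alpha,\beta\in(\cA\cup\cN)^*$, and compute its strongly connected components. Call a non-terminal $A$ \emph{branching} if $A\der \alpha A\beta A\gamma$ for some terminal strings $\alpha,\beta,\gamma$, and \emph{pumpable} if $A\der \alpha A\beta$ with $|\alpha\beta|\ge 1$. The central dichotomy is that $\rho_\cL$ has exponential growth if and only if some branching non-terminal is reachable from $S$. For the ``if'' direction, iterating a branching self-derivation builds a complete binary tree of depth $n$, producing at least $2^n$ distinct terminal words of length $O(n)$. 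For the ``only if'' direction, the absence of branching forces every derivation tree to be ``thin'': along each root-to-leaf path, each SCC contributes at most one-sided pumping, and the count of such trees is polynomial.

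In the polynomial case, I would define $k$ to be the maximum, over directed paths in the quotient DAG starting at $[S]$, of the number of pumpable SCCs visited by the path. The upper bound $\rho_\cL(M)\le CM^k$ is obtained by stratifying derivation trees by the sequence of pumpable SCCs traversed and bounding each stratum by a polynomial of degree $k$. The matching lower bound comes from choosing independent pumping multiplicities $n_1,\ldots,n_k$ with $n_1+\cdots+n_k\le M$ at the successive pumpable SCCs along a longest such path, producing $\Omega(M^k)$ distinct words.

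The main obstacle is the tight polynomial bound, in particular verifying that the $\Omega(M^k)$ words obtained by independent pumping along a chain of SCCs are genuinely distinct: this requires exploiting the normal form to rule out coincidences between iterated pumpings contributed by different SCCs, typically by tracking for each word a canonical derivation whose intermediate structure encodes the $n_i$. For the algorithmic part, every ingredient — CNF conversion, removal of useless symbols, construction of $\cG$, SCC decomposition, detection of a branching non-terminal (reducible to reachability in an auxiliary product graph on pairs of non-terminals), and longest-chain computation in a DAG — runs in time polynomial in the size of the grammar, which gives the promised polynomial-time decision procedure and, in the polynomial case, the value of $k$.
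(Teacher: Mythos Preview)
The paper does not supply its own proof of this theorem: it is quoted verbatim as a result from \cite{Shallit} (see also \cite{Bridson1} and \cite{Incitti}) and is used as a black box throughout Section~\ref{SectionAsymptotic}. There is therefore nothing in the paper to compare your proposal against.

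That said, your sketch does follow the strategy of the cited references, and the algorithmic outline (CNF, useless-symbol removal, SCC decomposition, detection of branching via a product construction, longest chain in the condensation DAG) is the right shape for a polynomial-time procedure. Two points deserve care if you intend this as more than a sketch. First, the remark that ``counting words reduces, up to a bounded ambiguity multiplier, to counting derivation trees'' is not correct for general context-free grammars: ambiguity can be unbounded, so the number of trees may dominate the number of words by an arbitrary factor. This does not damage the argument, since tree counts still give an \emph{upper} bound on word counts and the lower bound is handled separately, but the sentence as written is misleading. Second, the ``if'' direction of the exponential dichotomy --- that a branching derivation $A\der \alpha A\beta A\gamma$ yields $2^n$ \emph{distinct} words --- is exactly the place where the cited papers do real work: different binary tree shapes need not a priori yield different terminal strings, and one must either pass to a suitable sub-derivation or invoke a combinatorial lemma on word equations (as in \cite{Incitti}) to guarantee distinctness. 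You flag the analogous distinctness issue for the polynomial lower bound, and the same caution applies here.
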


Let now $F_n$ be a free group on $n$ generators $a_1,...,a_n$ and consider as usual the alphabet $\cA=\{a_1,...,a_n,\ol{a}_1,...,\ol{a}_n\}$. If $\cL(1)$ is the language of all words representing the identity element, then the function $\rho_{\cL(1)}$ has exponential growth. In particular, for every non-empty subset $S\subseteq F_n$ we have that $\rho_{\cL(S)}$ has exponential growth. Essentially, here the exponential growth is given by the fact that each element of $F_n$ can be represented by a lot of words; in order to eliminate ambiguity, in what follows we will consider only cyclically reduced words.

\subsection{Growth rate of sets of equations}\label{SubsectionGrowth}

Let $H\sgr F_n$ be a finitely generated subgroup with basis $h_1,...,h_r$ and let $g\in F_n$ be an element such that $\fI_g$ is non-trivial. We work over the alphabet $\cB=\{h_1,\ol{h}_1,...,h_n,\ol{h}_n,x,\ol{x}\}$. We want to study the growth of the function $\rho_g(M):=\rho_{\clan{\fI_g}}(M)$ which counts the number of cyclically reduced equations having $g$ as a solution up to a certain length, and of the functions $\rho_{g,d}(M):=\rho_{\clan{\fJ_{g,d}}}(M)$ which count the number of cyclically reduced equations of degree $d$ having $g$ as a solution up to a certain length.

\begin{mythm}\label{growth1}
The function $\rho_g(M)$ has exponential growth.
\end{mythm}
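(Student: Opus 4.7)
The plan is to invoke the polynomial-vs-exponential dichotomy for context-free languages (Theorem \ref{cfreegrowth}) and then rule out the polynomial alternative by a direct construction of exponentially many cyclically reduced equations. By Theorem \ref{cfreeIg}, $\fI_g$ is a context-free subset of $G := H*\gen{x}$, and since the set of cyclically reduced words over $\cB$ is regular, Proposition \ref{cfreeintersection} gives that $\clan{\fI_g}$ is a context-free language. Theorem \ref{cfreegrowth} then says $\rho_g(M)$ is either polynomial of some fixed degree or exponential. The exponential upper bound is automatic, since $\rho_g(M)$ is dominated by the total number of cyclically reduced words in $G$ of length $\le M$, which is $O((2r+1)^M)$. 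So it only remains to rule out polynomial growth by producing a lower bound of the form $\rho_g(M) \ge 2^{\Omega(M)}$.

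Since $g$ depends on $H$, the ideal $\fI_g$ contains a non-trivial element $w_0$; replacing by a cyclic conjugate, I take $w_0$ to be cyclically reduced. Setting aside the degenerate case $r=0$ (in which the only $g$ depending on the trivial $H$ is $g=1$, and the statement has to be read accordingly), the group $G$ has rank $r+1 \ge 2$, and I can choose $u \in G$ that does not commute with $w_0$. Then $v_1 := w_0$ and $v_2 := u w_0 u^{-1}$ both lie in $\fI_g$ by normality and together generate a free subgroup of $G$ of rank $2$. Replacing $v_1, v_2$ by high powers $v_1^N, v_2^N$ for $N$ sufficiently large makes $\{v_1^N, v_2^N\}$ satisfy a small-cancellation condition in $G$: every reduced word in the four-letter alphabet $\{v_1^{\pm N}, v_2^{\pm N}\}$ with $k$ syllables then reduces in $G$ to a cyclically reduced word of length $\Theta(k)$, and distinct reduced words produce distinct cyclically reduced elements of $G$. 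Since there are at least $3^k$ reduced words with $k$ syllables, this gives at least $3^k$ distinct cyclically reduced equations in $\fI_g$ of $G$-length at most $k L_0$, where $L_0 := N \cdot \max(|v_1|, |v_2|)$. Hence $\rho_g(M) \ge 3^{\lfloor M/L_0 \rfloor}$, ruling out polynomial growth.

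The hard part will be the small-cancellation step, namely verifying that for $N$ large the products of $v_1^{\pm N}$ and $v_2^{\pm N}$ do not collapse significantly under reduction or cyclic reduction in $G$, and that distinct reduced products give distinct cyclically reduced elements of $G$. This reduces to bounding the overlap between cyclic conjugates of high powers of non-commuting elements in a free group; the computation is standard but requires careful combinatorial analysis of the pieces appearing in the $v_i^N$.
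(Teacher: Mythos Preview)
Your reduction to exhibiting an exponential lower bound, via the dichotomy of Theorem \ref{cfreegrowth}, is exactly how the paper begins. The lower-bound constructions, however, are quite different, and the paper's is far more elementary.

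For $\rank{H}\ge 2$ the paper fixes one cyclically reduced $w\in\clan{\fI_g}$, lets $t_1,t_2,t_3,t_4\in\cB$ be the first and last letters of $w$ together with their inverses, and observes that for every reduced $h\in\cB^*$ whose first and last letters avoid $\{t_1,t_2,t_3,t_4\}$, the concatenation $hw\ol{h}w$ is already a cyclically reduced word in $\clan{\fI_g}$. Distinct $h$ give literally distinct strings, so injectivity is free, and since $\abs{\cB}\ge 6$ there are exponentially many admissible $h$ of each length. The case $\rank{H}=1$ is handled by a separate explicit family of words in $h,\ol h,x,\ol x$. No small cancellation, no passage to high powers, no analysis of overlaps.

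Your route through a rank-two free subgroup of $\fI_g$ and high powers $v_1^N,v_2^N$ can be made to work, but the injectivity statement as written is false. Take $W_1=v_1^N v_2^N v_1^{-N}$ and $W_2=v_2^N$: both are reduced in the four-letter alphabet $\{v_1^{\pm N},v_2^{\pm N}\}$, yet $W_1$ is conjugate to $W_2$ in $G$, and a direct computation shows that their cyclic reductions in $\cB^*$ coincide as strings (both strip down to the cyclic reduction of $w_0^N$). So ``distinct reduced words produce distinct cyclically reduced elements'' fails. The fix is to restrict to words that are \emph{cyclically} reduced in the four-letter alphabet and then argue, using the small-cancellation hypothesis you invoke, that two such words which become conjugate in $G$ must already be cyclic rotations of one another; this bounds the fibres linearly and preserves the exponential count. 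That argument is doable but is real work, exactly the ``careful combinatorial analysis'' you defer. The paper avoids the issue entirely by manufacturing the cyclically reduced words directly, so that distinctness is tautological.

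One incidental advantage of your approach is that it treats all $r\ge 1$ uniformly, whereas the paper splits off $r=1$ as a special case; but given how short the paper's argument is in each case, this does not buy much.
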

\begin{proof}
The language $\clan{\fI_g}$ is context-free, so by Theorem \ref{cfreegrowth} it has either polynomial growth (of some degree $k\in\bN$) or exponential growth. To prove the theorem, it is enough to find an exponential lower bound.

Assume that $H$ has rank at least $2$. Let $w\in\clan{\fI_g}$ and let $t_1,t_2,t_3,t_4\in\cB$ be the first and the last letter of $w$ and their inverses. Choose a reduced word $h\in\cB^*$ which doesn't begin or end with any of $t_1,t_2,t_3,t_4$: then we have a word $hw\ol{h}w\in\clan{\fI_g}$. Different choices of $h$ give different elements $hw\ol{h}w$. Since we have exponentially many choices for $h$ of length at most $M$, we obtain an exponential lower bound on $\rho_g$.

Assume now that $H$ has rank $1$. This means that $H=\gen{h}$ where $h=b^m$ and $g=b^k$ for some $b\in F_n\setminus\{1\}$ and $m\in\bZ\setminus\{0\}$ and $k\in\bZ$. In this case $\cB=\{h,\ol{h},x,\ol{x}\}$. Fix $M$ and for each $(\alpha_1,...,\alpha_{2M-1})\in\{1,2\}^{2M-1}$ define the word $w_{\alpha_1,...,\alpha_{2M-1}}\in\cB^*$ given by
$$w_{\alpha_1,...,\alpha_{2M-1}}=h^{\delta_1\alpha_1}xh^{\delta_2\alpha_2}\ol{x}h^{\delta_3\alpha_3}xh^{\delta_4\alpha_4}\ol{x}\ ...\ h^{\delta_{2M-1}\alpha_{2M-1}}xh^{-(\delta_1\alpha_1+...+\delta_{2M-1}\alpha_{2M-1})}\ol{x}$$
where $\delta_1=1$ and $\delta_j\in\{+1,-1\}$ is defined inductively by setting $\delta_j=1$ if and only if we have $\delta_1\alpha_1+...+\delta_{j-1}\alpha_{j-1}\le0$. We point out that $h^e$ for $e$ negative integer has to be interpreted as the word $\ol{h}^{-e}$. It is easy to check that $w_{\alpha_1,...,\alpha_{2M-1}}$ belongs to $\clan{\fI_g}$, and that different choices of $(\alpha_1,...,\alpha_{2M-1})$ produce different words. It is easy to show by induction that $\abs{\delta_1\alpha_1+...+\delta_j\alpha_j}\le2$ for $j=1,...,2M-1$, and thus it follows that $w_{\alpha_1,...,\alpha_{2M-1}}$ has length at most $6M$. Since we have $2^{2M-1}$ choices for $(\alpha_1,...,\alpha_{2M-1})$, this provides an exponential lower bound for $\rho_g(M)$.
\end{proof}

\begin{mythm}\label{growth2}
Let $d\in\bN$ be a non-negative integer. Then the function $\rho_{g,d}(M)$ has either exponential growth, or polynomial growth of degree $k$ for some $k\in\bN$. Moreover, there is an algorithm that tells us which case takes place, and in the second case computes the degree $k$ of the growth.
\end{mythm}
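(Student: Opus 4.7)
The plan is to reduce everything to Theorem \ref{cfreegrowth}, which already supplies both the dichotomy and the algorithm for an arbitrary context-free language. The only real step, then, is to exhibit $\clan{\fJ_{g,d}}$ as a context-free language over the alphabet $\cB = \{h_1,\ol{h}_1,\ldots,h_r,\ol{h}_r,x,\ol{x}\}$ and, indeed, to produce an explicit context-free grammar for it.

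By Theorem \ref{cfreeJgd}, $\fJ_{g,d}$ is context-free as a subset of $H*\gen{x}$, so by definition $\cL(\fJ_{g,d}) \subseteq \cB^*$ is a context-free language, with an explicit grammar computable from the proofs of Theorems \ref{cfreeIg} and \ref{cfreeJgd} (via Propositions \ref{cfreepreimage} and \ref{cfreeintersection}). Using Proposition \ref{cfreeequivdef} I pass to the context-free language $\rlan{\fJ_{g,d}}$ of reduced representatives. The set of cyclically reduced words in $\cB^*$ is regular: a small deterministic automaton can enforce free reduction while remembering the first letter so as to forbid cancellation between the last letter and that initial letter. Intersecting via Proposition \ref{cfreeintersection} then shows that $\clan{\fJ_{g,d}}$ is context-free, with a grammar that can be algorithmically constructed from the data $(H,g,d)$.

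With a context-free grammar for $\clan{\fJ_{g,d}}$ in hand, Theorem \ref{cfreegrowth} yields exactly the stated dichotomy for $\rho_{g,d}(M) = \rho_{\clan{\fJ_{g,d}}}(M)$: either polynomial growth of some integer degree $k$ or exponential growth. The same theorem supplies the polynomial-time algorithm that decides which case holds and returns $k$ in the polynomial case. Composing this with the algorithmic construction of the grammar above gives the required algorithm for $\rho_{g,d}$.

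The one small subtlety is that the paper's definition of polynomial growth requires a positive lower bound $\alpha M^k$, which fails for the identically zero function, i.e., when $\fJ_{g,d}=\emptyset$. I would handle this as a preprocessing step by running the algorithm of Theorem \ref{cfreeJgd2} to test emptiness and dispose of the empty case separately. Beyond this piece of bookkeeping there is no serious obstacle: all the hard work is imported wholesale from Theorem \ref{cfreegrowth}, and the only contribution of the present argument is the observation that the concrete language encoding degree-$d$ equations fits into that framework.
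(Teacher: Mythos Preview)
Your proposal is correct and follows essentially the same approach as the paper: construct a context-free grammar for $\clan{\fJ_{g,d}}$ and invoke Theorem~\ref{cfreegrowth}. The only difference is a minor detour: the paper observes (as in the proof of Theorem~\ref{cfreeJgd2}) that $\clan{\fJ_{g,d}}=\cL(\fI_g)\cap\cR$ directly, where $\cR$ is the regular language of cyclically reduced words with exactly $d$ occurrences of $x,\ol{x}$, whereas you first pass to $\cL(\fJ_{g,d})$ (which in the proof of Theorem~\ref{cfreeJgd} was itself built \emph{from} $\clan{\fJ_{g,d}}$ via Proposition~\ref{cfreeconjsat}) and then work back down; your handling of the empty case is a nice piece of care that the paper leaves implicit.
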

\begin{proof}
As in the proof of Theorem \ref{cfreeJgd2}, we can produce a grammar for the context-free language $\clan{\fJ_{g,d}}$. We apply Theorem \ref{cfreegrowth} to this grammar. The conclusion follows.
\end{proof}

\subsection{Degrees with polynomial growth rate}

Let $H\sgr F_n$ be a finitely generated subgroup with basis $h_1,...,h_r$ and let $g\in F_n$ be an element such that $\fI_g$ is non-trivial. We work over the alphabet $\cB=\{h_1,\ol{h}_1,...,h_n,\ol{h}_n,x,\ol{x}\}$. Recall that the degree of an equation $w(x)\in H*\gen{x}$ is defined as the number of occurrences of $x$ and $\ol{x}$ in the cyclic reduction of $w(x)$.

\begin{mydef}\label{defDg}
Define the following sets:

(i) $D_g=\{d\in\bN :\text{ there is a non-trivial equation }w\in\fI_g\text{ of degree }d\}$.

(ii) $D_g^{\pol,k}=\{d\in D_g : \rho_{g,d}$ has polynomial growth of degree $k\}$.

(iii) $D_g^\ex=\{d\in D_g : \rho_{g,d}$ has exponential growth$\}$.
\end{mydef}

According to Theorem \ref{growth2}, we have a partition $D_g=D^\ex_g\sqcup\bigsqcup_{k\in\bN}D^{\pol,k}_g$.

\begin{mylemma}\label{expdegrees}
Suppose $H$ has rank at least $2$. Then for each $d,d'\in D_g$ and $k\ge0$ we have $d+d'+2k\in D^\ex_g$.
\end{mylemma}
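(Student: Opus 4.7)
The plan is to produce, for any target degree $D := d+d'+2k$, an exponentially growing family of pairwise distinct cyclically reduced equations in $\fJ_{g,D}$ of length at most $M$; together with Theorem \ref{growth2}, this forces $\rho_{g,D}$ to have exponential growth and hence $D\in D_g^{\ex}$.

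First I would fix cyclically reduced non-trivial representatives $w_1,w_2\in\fI_g$ of degrees $d$ and $d'$ respectively, and use that $\fI_g\nor H*\gen{x}$ to observe that for \emph{any} $\alpha(x)\in H*\gen{x}$ the word
$$W_\alpha(x)\ :=\ w_1(x)\cdot\alpha(x)\cdot w_2(x)\cdot\alpha(x)^{-1}$$
lies in $\fI_g$, since $\alpha w_2\alpha^{-1}\in\fI_g$ by normality and $w_1\in\fI_g$. I would then parametrise $\alpha$ by
$$\alpha\ =\ \beta_0\, x\,\beta_1\, x\,\beta_2\,\cdots\, x\,\beta_k,$$
where each $\beta_i$ is a reduced (nonempty) word in $H$ on the basis $h_1,\dots,h_r$, and every occurrence of $x$ is positive. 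Then $\alpha$ has $k$ letters $x$ and $\alpha^{-1}$ has $k$ letters $\ol{x}$, so the total count of $x^{\pm 1}$ in $W_\alpha$ is $d+d'+2k$, provided no $x^{\pm 1}$ cancels in the (cyclic) reduction.

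The next step is to show that, with mild constraints on the endpoint letters of each $\beta_i$, the word $W_\alpha$ is actually cyclically reduced. In the free product $H*\gen{x}$, reduction takes place separately in the $H$-alphabet and in the $x$-alphabet, so a letter $x^{\pm 1}$ never cancels against an $h_j^{\pm 1}$: the interior boundaries $x|\beta_i$ and $\beta_i|x$ are automatically safe. The only problematic sites are the four boundaries $w_1|\beta_0$, $\beta_k|w_2$, $w_2|\beta_k^{-1}$ and the cyclic wrap-around $\beta_0^{-1}|w_1$, which amount to avoiding at most two forbidden values for the first and for the last letter of each $\beta_i$. Since $H$ has rank $\ge 2$, its alphabet has at least four letters, so the constraints leave exponentially many valid reduced $H$-words of any given length. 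Setting each $|\beta_i|\le\lfloor M/(k+1)\rfloor$, one obtains exponentially many valid tuples $(\beta_0,\dots,\beta_k)$, hence exponentially many $\alpha$'s, and so exponentially many words $W_\alpha$ of length $\le|w_1|+|w_2|+2M$. Distinctness of the $W_\alpha$'s is immediate because the factor $\alpha$ sits in a fixed positional window (between $w_1$ and $w_2$) and can be read off.

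The main obstacle is the bookkeeping in this last step: making sure that after imposing the boundary conditions one is left with an exponentially growing family, and that the degree of the cyclic reduction is \emph{exactly} $d+d'+2k$ rather than strictly less. The corner case $k=0$ is handled by the same construction with $\alpha=\beta_0\in H$, giving $W_{\beta_0}=w_1\,\beta_0\,w_2\,\beta_0^{-1}$, and the rank $\ge 2$ hypothesis again supplies exponentially many admissible $\beta_0$. Once the family is in place, $\rho_{g,D}(M)$ is bounded below exponentially, and since by Theorem \ref{growth2} the function $\rho_{g,D}$ is either polynomial or exponential, we conclude $D=d+d'+2k\in D_g^{\ex}$.
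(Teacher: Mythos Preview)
Your argument is correct and follows essentially the same strategy as the paper: form a product of a degree-$d$ equation with a conjugate of a degree-$d'$ equation, insert $2k$ extra occurrences of $x^{\pm1}$ via the conjugator, and exploit $\mathrm{rank}(H)\ge 2$ to choose the $H$-part of the conjugator from an exponentially large set while keeping the resulting word cyclically reduced. The paper packages this slightly differently, using $w_h=\ol{h}\,w\,h\,\ol{x}^k\,w'\,x^k$ with a single $h\in H$ (and the $x$'s grouped at the end) rather than your interleaved $\alpha=\beta_0 x\beta_1\cdots x\beta_k$; this makes the boundary bookkeeping marginally lighter (only two constraints, on the first and last letters of $h$), but the idea and the verification are the same.
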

\begin{myrmk}
Compare this with with the results of Section 5 of \cite{PART1}.
\end{myrmk}
\begin{proof}
Let $w\in\clan{\fJ_{g,d}}$ and, up to cyclic permutation, we can assume that $w$ is of the form $c_1x^{e_1}...c_\alpha x^{e_\alpha}$ with $c_1,...,c_\alpha\in\{h_1,\ol{h}_1,...,h_k,\ol{h}_k\}^*$ and $e_1,...,e_\alpha\in\bZ\setminus\{0\}$. Similarly let $w'\in\clan{\fJ_{g,d'}}$ where $w'=c_1'x^{e_1'}...c_{\beta}'x^{e_\beta'}$ with $c_1',...,c_\beta'\in\{h_1,\ol{h}_1,...,h_k,\ol{h}_k\}^*$ and $e_1',...,e_\beta'\in\bZ\setminus\{0\}$. Without loss of generality we assume $e_\beta'>0$.

We fix $k\ge0$ and for each $h\in\{h_1,\ol{h}_1,...,h_r,\ol{h}_r\}^*$ we consider the word $w_h=\ol{h}wh\ol{x}^kw'x^k\in\cB^*$, which represents an equation for $g$. If the $h$ is reduced, and the first letter of $h$ is different from the first of $c_1$, and the last letter of $h$ is not the inverse of the first of $c_1'$, then the word $w_h$ is cyclically reduced, and in particular it belongs to $\clan{\fJ_{g,d}}$. Since $H$ has rank $r\ge2$, there are exponentially many choices for such an $h$ of a given length and with those properties. This proves the desired result.
\end{proof}

As a corollary of Lemma \ref{expdegrees}, we have that $D_g$ coincides, up to a finite set, with either the set of natural numbers or the set of non-negative even numbers (see also $\Pdegreeset$ of \cite{PART1}). If $D_g$ contains an odd degree $d$, then it contains all the degrees $2d+2k$ and $3d+2k$ for $k\in\bN$, and thus coincides with the whole $\bN$ up to a finite set. If $D_g$ contains only even degrees, and if it contains a degree $d$, then it contains all the degrees $2d+2k$ for $k\in\bN$, and thus coincides with $2\bN$ minus a finite set. The following Lemma \ref{allexp} tells us that the same holds for $D^\ex_g$.

\begin{mylemma}\label{allexp}
Suppose $H$ has rank at least $2$. Then $D_g\setminus D^\ex_g$ is finite.
\end{mylemma}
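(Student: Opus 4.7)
The plan is to leverage Lemma \ref{expdegrees} in conjunction with a parity argument. The hypothesis $H$ of rank $\ge 2$ is what lets us invoke that lemma at all, so it propagates automatically. We may assume $D_g$ is nonempty (otherwise the statement is vacuous) and we fix some $d_0 \in D_g$. The first move is to apply Lemma \ref{expdegrees} with $d = d' = d_0$, which immediately yields $2d_0 + 2k \in D_g^{\ex}$ for every $k \ge 0$. In particular every even integer $\ge 2d_0$ lies in $D_g^{\ex} \subseteq D_g$; this already handles ``half'' of $D_g$ no matter what.

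From here I would split into two cases according to whether $D_g$ contains an odd integer. If $D_g \subseteq 2\bN$, then the preceding observation directly gives $D_g \cap [2d_0, \infty) \subseteq D_g^{\ex}$, so $D_g \setminus D_g^{\ex}$ is contained in the finite set $D_g \cap [0, 2d_0)$ and we are done.

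Otherwise, pick some odd $d_1 \in D_g$. Since the first step already produced the even element $2d_0 \in D_g^{\ex} \subseteq D_g$, I would apply Lemma \ref{expdegrees} a second time with $(d, d') = (2d_0, d_1)$ to conclude that $2d_0 + d_1 + 2k \in D_g^{\ex}$ for every $k \ge 0$. Because $2d_0 + d_1$ is odd, this covers every odd integer $\ge 2d_0 + d_1$. Combined with the first step, every integer $\ge N := 2d_0 + d_1$ belongs to $D_g^{\ex}$, so $D_g \setminus D_g^{\ex} \subseteq D_g \cap [0, N)$ is finite.

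There is no serious obstacle: the whole argument is a short two-step application of Lemma \ref{expdegrees}. The only mild subtlety is that a priori $D_g$ might fail to contain an even element (so that one could not form $d_e + d_o$ of odd parity directly), but this is resolved automatically by the very first application of the lemma, which forces $2d_0 \in D_g^{\ex} \subseteq D_g$. In particular one does not need to invoke the stronger result from \cite{PART1} describing the structure of $D_g$ modulo a finite set.
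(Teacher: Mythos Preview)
Your proof is correct and follows essentially the same approach as the paper: a parity split combined with two applications of Lemma \ref{expdegrees}. The paper's version simply chooses, in the odd case, the odd element $d$ itself for both steps (obtaining the thresholds $2d$ and $3d$), whereas you first fix an arbitrary $d_0$ and then an odd $d_1$; the underlying argument is identical.
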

\begin{proof}
If $D_g$ only contains even numbers, then take $d\in D_g$. By Lemma \ref{expdegrees}, every even number $\ge 2d$ belongs to $D^\ex_g$, and we are done. Suppose now $D_g$ contains an odd number $d\in D_g$: by Lemma \ref{expdegrees} we have that $D^\ex_g$ contains all even numbers $\ge 2d$ and all odd numbers $\ge 3d$, and we are done.
\end{proof}

\begin{mythm}\label{Dgpolk}
Suppose $H$ has rank at least $2$. Then there is an algorithm that computes the finite sets $D^{\pol,k}_g$ for $k\in\bN$; all but finitely many of these are empty.
\end{mythm}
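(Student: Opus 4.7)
By Lemma \ref{allexp} the set $D_g \setminus D_g^\ex = \bigsqcup_{k \in \bN} D_g^{\pol,k}$ is finite, so all but finitely many of the sets $D_g^{\pol,k}$ are empty (and each is itself finite). The plan is to (i) compute an explicit upper bound $B$ with $D_g \setminus D_g^\ex \subseteq \{0, 1, \ldots, B\}$, and then (ii) classify each integer $d \in \{0, 1, \ldots, B\}$ into the appropriate piece of the partition.

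Step (ii) is a direct application of earlier results. For each $d \le B$, use Theorem \ref{introcfreeJgd2} to decide whether $d \in D_g$, and, if so, apply Theorem \ref{growth2} to the context-free grammar for $\clan{\fJ_{g,d}}$ built in the proof of Theorem \ref{cfreeJgd} to determine either that $d \in D_g^\ex$ or the unique $k$ with $d \in D_g^{\pol,k}$. Collecting this information across all $d \le B$ produces every nonempty $D_g^{\pol,k}$.

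For step (i) I would quantify the argument already used in the proof of Lemma \ref{allexp}. Use Theorem \ref{introdmin} to compute $d_{\min} = \min D_g$. Lemma \ref{expdegrees} applied with $d = d' = d_{\min}$ gives $2d_{\min} + 2k \in D_g^\ex$ for all $k \ge 0$; in particular $2d_{\min} \in D_g^\ex \subseteq D_g$. Applying Lemma \ref{expdegrees} again with $d = d_{\min}$ and $d' = 2d_{\min}$ gives $3d_{\min} + 2k \in D_g^\ex$ for all $k \ge 0$. If $d_{\min}$ is odd, then $3d_{\min}$ is odd, so both parities beyond $3d_{\min}$ are covered and $B = 3d_{\min} - 1$ works. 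If $d_{\min}$ is even, Lemma \ref{expdegrees} applied only to $d_{\min}$ controls even integers alone, so I invoke $\Pdegreeset$ from \cite{PART1} to decide whether $D_g$ contains an odd integer and, if so, to compute the smallest such $d_o$. Either $D_g \subseteq 2\bN$, and $B = 2d_{\min} - 1$ suffices, or Lemma \ref{expdegrees} applied with $d = d_{\min}$ and $d' = d_o$ shows that every odd integer $\geq d_{\min} + d_o$ lies in $D_g^\ex$, giving $B = \max(2d_{\min}, d_{\min} + d_o) - 1$.

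The main obstacle is precisely this subcase where $d_{\min}$ is even: Lemma \ref{expdegrees} cannot by itself produce any odd element of $D_g$ starting from $d_{\min}$, so the bound on the odd part of $D_g \setminus D_g^\ex$ must be obtained by an outside input, namely the structural theorem $\Pdegreeset$ of \cite{PART1}, which locates the smallest odd element of $D_g$ or certifies its absence. Once $B$ is in hand everything reduces to finitely many calls to Theorems \ref{introcfreeJgd2} and \ref{growth2}.
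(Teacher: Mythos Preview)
Your proof is correct and follows the same two-stage template as the paper: compute an explicit bound $B$ such that every $d>B$ lies in $D_g^{\ex}$, then classify each $d\le B$ individually via Theorem~\ref{growth2}. The difference lies entirely in how the bound $B$ is obtained.

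The paper does not compute $d_{\min}$ at all. Instead it invokes $\Pidealfingen$ of \cite{PART1} to produce an explicit finite set of normal generators $w_1,\ldots,w_k$ of $\fI_g$, and then splits on the parity of their degrees: if all the $w_i$ have even degree then $\Pevendegree$ of \cite{PART1} forces $D_g\subseteq 2\bN$, and $B=2d_1-1$ works with $d_1=\deg w_1$; if some $w_i$ has odd degree $d_1$ then $B=3d_1-1$ works directly. Your route uses $d_{\min}$ (via Theorem~\ref{dmin} from the present paper) for the bound, and in the awkward subcase where $d_{\min}$ is even you appeal to the heavier structural result $\Pdegreeset$ of \cite{PART1} to locate, or rule out, a smallest odd element $d_o$. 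Both arguments ultimately rely on the same quantitative content of Lemma~\ref{expdegrees} (the $2d$/$3d$ trick you spelled out is exactly the proof of Lemma~\ref{allexp}). The paper's choice is a bit lighter, since $\Pevendegree$ is a simpler fact than the full computation of $D_g$ in $\Pdegreeset$; your version has the mild advantage of staying closer to the tools developed in this paper (Theorem~\ref{dmin}) rather than importing the normal-generator machinery.
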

\begin{proof}
As proved in \cite{PART1}, we can produce algorithmically a finite set of normal generators $\fI_g=\ggen{w_1,...,w_k}$ for $\fI_g\nor H*\gen{x}$.

Suppose $w_1,...,w_k$ all have even degree: then $D_g$ only contains even numbers, according to \cite{PART1}. We take $d_1$ to be the degree of $w_1$, and we apply Theorem \ref{growth2} to check the type of growth of $\rho_{g,d}$ for all $d<2d_1$; as in Lemma \ref{allexp} we have that every even degree $\ge 2d_1$ belongs to $D^\ex_g$, and so we are done.

Suppose one of $w_1,...,w_k$ has odd degree: let's say $w_1$ has degree $d_1$ odd. We apply Theorem \ref{growth2} to check the type of growth of $\rho_{g,d}$ for all $d<3d_1$; as in Lemma \ref{allexp} we have that each degree $\ge 3d_1$ belongs to $D^\ex_g$, and so we are done.
\end{proof}

\section{Running time of the algorithms}\label{SectionAlgorithms}

In this section we are interested in bounding the running time of the algorithms described in this paper. As we already pointed out in the introduction, we assume that we are working on a RAM machine; we assume that the machine can perform in time $O(1)$ the most basic operations, including sum and multiplication of two integer numbers. Some of the algorithms have context-free grammars as input or output, and thus we need a way to quantify the size of a context-free grammar; we express the complexity of a context-free grammar in terms of the following parameters:

\begin{mydef}
Let $(\cN,\cP,S)$ be a context-free grammar.
%
%

(i) We denote with $\norma{\cP}=\sum_{(N,u)\in\cP}(1+l(u))$ the \textbf{total size} of the grammar.

(ii) We denote with $\ram{\cP}$ the maximum number of non-terminal symbol (counted with repetitions) in $u$ for $(N,u)\in\cP$.
\end{mydef}

\begin{myrmk}
Here the notation $\ram{\cP}$ is for ``ramification'' of $\cP$.
\end{myrmk}

\subsection{Checking whether a context-free language is non-empty}

\newcommand{\true}{\texttt{true}}
\newcommand{\false}{\texttt{false}}
\newcommand{\occorrenze}{\texttt{occurrences}}
\newcommand{\termina}{\texttt{terminates}}
\newcommand{\lunghezza}{\texttt{length}}
\newcommand{\parola}{\texttt{word}}
\newcommand{\rimasti}{\texttt{remaining}}
\newcommand{\stazza}{\texttt{size}}
\newcommand{\coda}{\texttt{queue}}
\newcommand{\lista}{\texttt{list}}
\newcommand{\update}{\texttt{Update}}

We now provide a classical algorithm to check whether a context-free grammar produces the empty language or not, taken from \cite{HMU06}; more generally, we provide algorithms to produce words in the language generated by a given context-free grammar.

The following Proposition \ref{nonempty2} below is a restatement of Proposition \ref{nonempty}, but with an additional bound on the running time of the algorithm. The algorithm described in the proof of Proposition \ref{nonempty2} below is already described in Section 7.4.3 of \cite{HMU06}; however, we need to make the proof more explicit in order to keep track of the complexity of the variants we need.

\begin{myprop}\label{nonempty2}
There is an algorithm that, given a context-free grammar $(\cN,\cP,S)$, tells us whether the corresponding context-free language is non-empty. The algorithm runs in time $O(\norma{\cP})$.
\end{myprop}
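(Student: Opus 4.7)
The plan is to compute, by a worklist algorithm in the spirit of the Dowling--Gallier procedure for Horn-SAT, the set $\cP_{\text{prod}} \subseteq \cN$ of \emph{productive} non-terminals, i.e.\ those $N \in \cN$ such that $N \der w$ for some $w \in \cA^*$. It is standard that the language is non-empty if and only if $S \in \cP_{\text{prod}}$, because by induction on the length of a derivation one sees that $N \der w$ with $w \in \cA^*$ iff there is a production $N \pro u$ with every non-terminal occurring in $u$ productive. Thus the algorithm just decides whether $S$ is productive.

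For the linear-time implementation I would, in a single pass through $\cP$, set up the following auxiliary data: for each production $p = (N, u) \in \cP$ a counter $c[p]$ initialised to the number of occurrences of non-terminals in $u$; and for each non-terminal $M \in \cN$ an adjacency list $L[M]$ containing one pointer to $p$ for every occurrence of $M$ on the right-hand side of $p$. Both structures are built in time $O(\norma{\cP})$, since each production contributes $O(1 + l(u))$ work. I then initialise a queue with every non-terminal $N$ that occurs as the left-hand side of some $p$ with $c[p] = 0$ (such productions have a purely terminal right-hand side, so their left-hand sides are immediately productive); a boolean flag for each non-terminal ensures that none is ever enqueued twice. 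The main loop pops a productive non-terminal $M$ from the queue and walks through $L[M]$; for every pointer to a production $p = (N, u)$ it decrements $c[p]$ by one, and if $c[p]$ drops to $0$ and $N$ is not yet flagged productive, it flags $N$ and enqueues it. When the queue empties, output ``non-empty'' iff $S$ has been flagged; if so, we can additionally produce a witness by walking backwards through the derivations that triggered each flagging.

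The complexity bound is immediate from the invariant that each pointer in an adjacency list is traversed at most once: the total number of decrement operations equals $\sum_{p \in \cP} c[p]$, which is bounded by $\norma{\cP}$; each non-terminal is processed at most once by the flag; and initialisation is $O(\norma{\cP})$ as noted. So the overall running time is $O(\norma{\cP})$, as required. The main subtlety, and the only place where a naive implementation would blow up to $O(|\cN| \cdot \norma{\cP})$, is ensuring that the progress made when a non-terminal becomes productive is recorded by \emph{local} decrements of the affected counters rather than by re-scanning all productions; the adjacency-list construction in the setup phase is precisely what makes this possible, and keeping track of multiplicities (one pointer per occurrence rather than per distinct non-terminal) is what lets the counter accurately detect when every non-terminal occurrence on a right-hand side has become productive.
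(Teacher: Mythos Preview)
Your proposal is correct and follows essentially the same approach as the paper: both compute the set of productive non-terminals via a worklist algorithm, using per-production counters of remaining unproductive right-hand-side occurrences and, for each non-terminal, an adjacency list of the productions in which it occurs (with multiplicity), so that each occurrence is visited at most once. The only cosmetic difference is that the paper enqueues production indices (and, on dequeuing, reads off the left-hand side and checks its flag) whereas you enqueue non-terminals directly; the resulting algorithm and its $O(\norma{\cP})$ analysis are the same.
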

\begin{proof}[Description of the algorithm]
We assume that the non-terminal symbols are numbered from $0$ to $\abs{\cN}-1$, with the initial symbol $S$ being indexed as $0$, and that the production rules are numbered from $0$ to $\abs{\cP}-1$. We initialize the following data structures:

(i) An array $\occorrenze$ of length $\abs{\cN}$. For $i=0,...,\abs{\cN}-1$, the entry $\occorrenze[i]$ is the list of the occurrences of the $i$-th non-terminal symbol in the production rules: the list $\occorrenze[i]$ contains a couple of integers $(c,d)$ if and only if the $c$-th production rule is $N\pro u$ and the $d$-th symbol of $u$ is the $i$-th non-terminal symbol. We read the list of all production rules once, and whenever we find a non-terminal symbol, we add the corresponding couple to the suitable entry of $\occorrenze$.

(ii) An array $\termina$ of length $\abs{\cN}$, each entry containing either $\true$ or $\false$. We initialize $\termina[i]=\false$ for $i=0,...,\abs{\cN}-1$.

(iii) An array $\rimasti$ of length $\abs{\cP}$. For $j=0,...,\abs{\cP}-1$, we consider the $j$-th production rule $N\pro u$, and we initialize $\rimasti[j]$ to be the number of non-terminal symbols in $u$, counted with repetition.

(iv) A queue $\coda$ that contains the indices of some production rules. We use a first-in first-out queue (but it doesn't really matter). We initialize $\coda$ by reading the array $\rimasti$ once, and whenever we find $\rimasti[j]=0$ we add $j$ to $\coda$.

We now describe the $\update$ step. We take (and remove) an index $j$ from $\coda$, and we consider the $j$-th production rule $(N,u)$, where $N$ is the $i$-th non-terminal symbol. If $\termina[i]$ is $\true$ we immediately terminate the $\update$ step; otherwise we set $\termina[i]$ from $\false$ to $\true$ and we proceed. We read the list $\occorrenze[i]$: when we read the couple $(c,d)$ we decrease by $1$ the number $\rimasti[c]$, and if $\rimasti[c]$ becomes $0$, then we add $c$ to $\coda$.

The algorithm now runs as follows: it initializes the data structures (i),(ii),(iii),(iv) as explained above, and then it starts running the $\update$ step until $\coda$ becomes empty. At that point the algorithm stops and gives $\termina[0]$ as output.
\end{proof}
\begin{proof}[Proof that the algorithm works and bounds on the running time]\

(i) The array $\occorrenze$ is initialized at the beginning of the algorithm and then never modified.

(ii) At any point during the algorithm, $\termina[i]$ is $\true$ if and only if we have found out that the $i$-th non-terminal symbol can produce a word in $\cA^*$. Symbols that have been inserted in $\coda$ but have not yet gone through the $\update$ step still remain on $\false$.

(iii) At any point during the algorithm, if $N\pro u$ is the $j$-th production rule, then $\rimasti[j]$ is the number of non-terminal symbols in $u$ such that we don't know whether they can produce a word in $\cA^*$ or not. In fact, at the beginning of the algorithm we have that the $i$-th symbol satisfies $\termina[i]=\false$ for $i=0,...,\abs{\cN}-1$, and thus we initialize $\rimasti$ by counting all the non-terminal symbols. At each $\update$ step, we update $\termina[i]=\true$ exactly at the same time when we update $\rimasti$; we decrease the entries of $\rimasti$ according to the occurrences of the $i$-th non-terminal symbol, which we read from $\occorrenze$.

(iv) The queue $\coda$ contains the indices of the production rules that have been discovered to produce a word in $\cA^*$; such a production $N\pro u$ tells us that the symbol $N$ can produce a word in $\cA^*$, and thus we have to update the other data structures. During the initialization, we add to $\coda$ exactly the non-terminal symbols $N$ which have some production rule $N\pro u$ with $u\in\cA^*$, since this proves that each of them can produce a word in $\cA^*$. During the update process, whenever the $j$-th production rule $N\pro u$ satisfies $\rimasti[j]=0$, this means that all the non-terminal symbols in $u$ have been proved to produce some word in $\cA^*$, and thus we have to add $j$ to $\coda$. Notice that each production rule is added to $\coda$ at most once, when the corresponding entry of $\rimasti$ becomes $0$.

The $\update$ step consists of taking a production rule $N\pro u$ from $\coda$ and updating the other data structures adding the information that $N$ can produce a word in $\cA^*$. If $N$ had already been found out to produce a word in $\cA^*$, then we just skip to updating the next production rule in $\coda$; otherwise we modify the value of $\termina$ accordingly, and then we read from $\occorrenze$ the occurrences of $N$ in the production rules, and we update the array $\rimasti$, taking into account that $N$ no longer counts as non-terminal, as it can be substituted with a word in $\cA^*$. It is possible that some entries of $\rimasti$ decrease to $0$ in the update process, meaning that other production rules have to be added to $\coda$.

This shows that, at the end of the algorithm, if $\termina[i]=\true$ then the $i$-th non-terminal symbol can produce a word in $\cA^*$. We want to prove that the converse implication holds. Suppose by contradiction that the algorithm ended, and the $i$-th non-terminal symbol $N$ can produce a word in $\cA^*$ but still has $\termina[i]=\false$. We take a derivation $N\pro v_1\pro...\pro v_r$ with $v_r\in\cA^*$ and without loss of generality we can assume that both $N$ and the derivation have been chosen with $r$ minimum possible. The minimality of $r$ implies that every non-terminal symbol in $v_1$ has the corresponding entry of $\termina$ equal to $\true$. The discussion above implies that, at the end of the algorithm, the production rule $(N,v_1)$ has the corresponding entry of $\rimasti$ equal to $0$. But this means that the production rule had been added to $\coda$, and thus that $\termina[i]$ had been updated to $\true$, contradiction.

Thus at the end of the algorithm we have that the array $\termina$ tells us, for each symbol, whether they can produce a word in $\cA^*$ or not. In particular, as the initial symbol $S$ is indexed as $0$, we get the correct output $\termina[0]$.

Let's now discuss the complexity of the algorithm. The initialization of $\occorrenze$, $\termina$, $\rimasti$, $\coda$ are done in time $O(\norma{\cP})$, $O(\abs{\cN})$, $O(\norma{\cP})$, $O(\abs{\cP})$ respectively. During all the update cycles, the operations of taking one production rule from $\coda$, checking and updating the value of $\termina$, and adding a new production rule to $\coda$, are done at most once for each production rule, so the total time required is $O(\abs{\cP})$; the operation of updating $\rimasti$ due to a given occurrence $(c,d)$ of a certain non-terminal symbol is done at most once for each symbol in each of the words $u$ for $(N\pro u)\in\cP$, so the total time required is $O(\norma{\cP})$.
\end{proof}

When the algorithm of Proposition \ref{nonempty2} gives an affirmative answer, we would like to algorithmically and efficiently build a word belonging to the language; however, we point out that the length of the shortest word in the language can be exponential in $\norma{\cP}$, and thus any algorithm that explicitly writes down such a word runs in exponential time. One possible way of going around this problem is to give as output, instead of the full explicit word belonging to the language, some other piece of information, that allows us to build such word in a straightforward manner, while at the same time being shorter.

\begin{myprop}\label{explicit1}
There is an algorithm that, given a context-free grammar $(\cN,\cP,S)$ with non-empty language $\cL$, produces a list of production rules $(N_1\pro u_1),...,(N_m\pro u_m)\in\cP$ such that:

(i) The symbols $N_1,...,N_m$ are pairwise distinct and $N_m=S$ is the initial symbol.

(ii) The word $u_i$ contains only terminal symbols and possibly the symbols $N_1,...,N_{i-1}$.

In particular, starting at $S$ and applying the production rules in the list (in any order) produces a word in the language $\cL$. The algorithm runs in time $O(\norma{\cP})$.
\end{myprop}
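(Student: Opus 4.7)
The plan is to adapt the algorithm of Proposition~\ref{nonempty2} so that, each time it discovers a new non-terminal symbol to be terminable, it also records a \emph{witness} production rule responsible for that discovery. Concretely, I introduce an initially empty list $\lista$ and modify the \update{} step as follows: when a production rule $(N,u)$ is taken from $\coda$ and the entry $\termina[i]$ (where $N$ is the $i$-th non-terminal) is still $\false$, at the moment I set $\termina[i]:=\true$ I also append the pair $(N,u)$ to $\lista$. Nothing else in the algorithm changes.

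The crucial invariant, inherited from the analysis of Proposition~\ref{nonempty2}, is that a production rule can reach $\coda$ only after every non-terminal occurring in its right-hand side has already been marked as terminable. Combined with the modification above, this means that whenever I append $(N,u)$ to $\lista$, every non-terminal occurring in $u$ has already been appended to $\lista$ earlier. Enumerating the entries of $\lista$ in insertion order as $(N_1,u_1),\ldots,(N_t,u_t)$, the $N_i$ are pairwise distinct (each non-terminal is appended at most once, namely when it first gets marked) and each $u_i$ contains, besides terminals, only non-terminals from $\{N_1,\ldots,N_{i-1}\}$. To ensure that the initial symbol appears as the \emph{last} entry, I halt the main loop as soon as $\termina[0]$ turns $\true$; since $\cL\ne\emptyset$ by hypothesis, the correctness of Proposition~\ref{nonempty2} guarantees that $S$ is eventually marked, and at that instant the last entry appended to $\lista$ is precisely a rule with left-hand side $S$. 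Truncating there gives $N_m=S$ and the list has the two desired properties.

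The step I expect to require some care is checking that applying the recorded rules ``in any order'' really produces a word in $\cL$. Starting from the one-letter word $S$, only the rule $(N_m,u_m)=(S,u_m)$ can be applied; after applying it, the working word lies in $(\cA\cup\{N_1,\ldots,N_{m-1}\})^*$. By induction on $m-i$, once each of the rules $(N_m,u_m),\ldots,(N_i,u_i)$ has been applied wherever applicable, the working word contains no symbol from $\{N_i,\ldots,N_m\}$: indeed, rule $(N_i,u_i)$ can only introduce symbols from $\{N_1,\ldots,N_{i-1}\}\cup\cA$, so no later scan can reintroduce $N_i$. At each stage the applicable rules are forced by which left-hand sides are present, so the scanning order is immaterial, and after the list has been swept (possibly several times) the working word lies in $\cA^*$.

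The running-time analysis is then the only bookkeeping item. The extra work per marked symbol is $O(1)$ (allocating and appending one list entry, together with the early-halt check), so the overhead amounts to $O(|\cN|)=O(\norma{\cP})$ beyond what Proposition~\ref{nonempty2} already does. This is absorbed into the $O(\norma{\cP})$ bound, giving the claimed complexity.
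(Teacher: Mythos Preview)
Your proposal is correct and follows essentially the same approach as the paper: both modify the algorithm of Proposition~\ref{nonempty2} by appending to an auxiliary list $\lista$ the production rule that first causes each non-terminal to be marked terminable, and then truncate at the rule for $S$. The paper's proof is terser (it does not spell out the invariant, the ``in any order'' verification, or the running-time accounting), so your extra justification is fine and does not deviate in substance.
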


\begin{myrmk}
In particular, the algorithm of the above Proposition \ref{explicit1} produces a sub-grammar
$$(\{N_1,...,N_m\},\{N_1\pro u_1,...,N_m\pro u_m\},S)$$
of our given grammar $(\cN,\cP,S)$. This sub-grammar has the special property that its language consists of a single word; such a grammar is sometimes referred to as \textit{straight line grammar}.
\end{myrmk}

\begin{proof}[Proof of Proposition \ref{explicit1}]
We run the same algorithm as in the proof of Proposition \ref{nonempty2} but with a few changes.

(i) We initialize an array $\lista$, which at the beginning is empty, but that will contain the list of (the indices of) the production rules that is required as output. During the $\update$ step, if we have taken the index $j$ from $\coda$ and if we update $\termina[i]$ from $\false$ to $\true$, then at the same time we also add $j$ to the end of $\lista$.

(ii) At the end of the algorithm we give as output the array $\lista$, truncated at the (unique) occurrence of a production rule for the symbol $S$.
\end{proof}

We are now interested in producing a word in the language that has minimum \textbf{size}. In order to keep flexible our notion of \textbf{size}, we assume we are given a function $\sigma:\cA\rar\bN$ that assigns to each letter $a$ in our alphabet a non-negative integer number $\sigma(a)$; for a word $w=w_1...w_{l(w)}\in\cA^*$ we define $\sigma(w):=\sum_{i=1}^{l(w)}\sigma(w_i)$.

\begin{mydef}\label{defsize}
Let $(\cN,\cP,S)$ be a context-free grammar.

(i) For a non-terminal symbol $N\in\cN$ define $\sigma(N):=\min\{\sigma(w) : w\in\cA^*\text{ with }N\der w\}$.

(ii) For a word $u=u_1...u_{l(u)}\in(\cA\cup\cN)^*$ define $\sigma(u):=\sum_{i=1}^{l(u)}\sigma(u_i)$.
\end{mydef}

\begin{myprop}\label{explicit2}
There is an algorithm that, given a context-free grammar $(\cN,\cP,S)$ with non-empty language $\cL$, and a function $\sigma:\cA\rar\bN$, produces a list of couples $(N_1\pro u_1,\tau_1),...,(N_m\pro u_m,\tau_m)\in\cP\times\bN$ such that:

(i) The symbols $N_1,...,N_m$ are pairwise distinct and $N_m=S$ is the initial symbol.

(ii) We have $\sigma(N_i)=\tau_i$.

(iii) The word $u_i$ contains only terminal symbols and possibly the symbols $N_1,...,N_{i-1}$.

(iv) The sum of $\sigma(c)$, for each symbol $c$ in $u_i$, is equal to $\tau_i$.

In particular $\tau_m=\sigma(S)$ is the minimum possible size for a word in the language. Moreover, starting at $S$ and applying the production rules in the list (in any order) produces a word of minimum possible size in the language $\cL$. The algorithm runs in time $O(\norma{\cP}\log\abs{\cP})$.
\end{myprop}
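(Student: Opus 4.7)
The plan is to adapt the algorithm of Proposition \ref{explicit1} by replacing the first-in-first-out queue $\coda$ with a min-priority queue (for instance a binary heap), keyed by a running estimate of the size. This is essentially the hypergraph generalization of Dijkstra's shortest-path algorithm, applied to the AND/OR graph associated with the grammar: non-terminal symbols play the role of OR-nodes (a non-terminal can be produced through any one of its production rules) while production rules play the role of AND-nodes (a rule fires only once all non-terminals on its right-hand side are realized). The monotonicity needed for Dijkstra to work is guaranteed by $\sigma$ taking values in $\bN$.

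We keep the arrays $\occorrenze$ and $\termina$ as in the proof of Proposition \ref{nonempty2}, together with the counter $\rimasti[j]$, and we introduce a new accumulator $\stazza[j]$ that stores the current sum $\sum\sigma(c)$ ranging over those symbols $c$ of the right-hand side of the $j$-th rule which are either terminal or already finalized; at initialization, $\stazza[j]$ is the sum of $\sigma(c)$ over the terminal symbols only. We also maintain an array storing $\sigma(N)$ for the already finalized non-terminals, and the output list $\lista$. The priority queue $\coda$ is initialized by inserting every rule $j$ with $\rimasti[j]=0$, using the key $\stazza[j]$. The $\update$ step extracts the rule $j=(N,u)$ of minimum priority $\tau$ from $\coda$: if $N$ is already finalized we skip, otherwise we record $\sigma(N)=\tau=\stazza[j]$, mark $N$ as finalized, append the couple $(N\pro u,\tau)$ to $\lista$, and then traverse $\occorrenze$ at the index of $N$, incrementing $\stazza[c]$ by $\sigma(N)$ and decrementing $\rimasti[c]$ by $1$ for every occurrence $(c,d)$, and inserting $c$ into $\coda$ as soon as $\rimasti[c]$ drops to $0$. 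We iterate $\update$ until $S$ has been finalized, at which point we return $\lista$ truncated after the rule for $S$.

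Correctness reduces to the claim that, when we extract a rule $j=(N,u)$ of minimum priority $\tau$, one has $\sigma(N)=\tau$. The inequality $\sigma(N)\le\tau$ is immediate, since at the moment $j$ was inserted into $\coda$ the value $\stazza[j]$ equalled $\sigma(u)$ (each non-terminal of $u$ had by then been finalized with its true $\sigma$-value), and the rule $N\pro u$ gives a derivation of $N$ of total size $\sigma(u)$. For the reverse inequality we argue by contradiction: assume the set $X$ of non-finalized symbols $N'$ with $\sigma(N')<\tau$ is non-empty, choose $N^{*}\in X$ minimizing $\tau^{*}:=\sigma(N^{*})$, and fix an optimal derivation tree $T$ for $N^{*}$. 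Every internal node of $T$ has value exactly $\sigma$ of its label (otherwise $T$ could be improved), so we may pick a deepest internal node $M^{**}$ whose label $M_0$ satisfies $\sigma(M_0)=\tau^{*}$. Every non-terminal in the rule applied at $M^{**}$ is then a strictly deeper internal node of $T$, hence has $\sigma<\tau^{*}$, hence, by minimality of $N^{*}$ in $X$, is already finalized. The rule at $M^{**}$ thus has $\rimasti=0$ and $\stazza=\tau^{*}<\tau$, so it sits in $\coda$ with priority strictly less than $\tau$, contradicting the minimality of $\tau$. Condition (iii) of the statement then follows because a rule enters $\coda$ only once all non-terminals of its right-hand side are finalized, while (ii) and (iv) are exactly the content of the claim just proved.

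For the running time, the initialization is $O(\norma{\cP})$; each of the $\abs{\cP}$ production rules is inserted into, and extracted from, the heap at most once, contributing $O(\abs{\cP}\log\abs{\cP})$ in total; the combined bookkeeping of incrementing $\stazza$ and decrementing $\rimasti$ costs $O(1)$ per occurrence of a non-terminal in a production rule, which sums to $O(\norma{\cP})$. The overall cost is $O(\norma{\cP}\log\abs{\cP})$ as claimed. The main obstacle in the plan is the correctness lemma above: in the AND/OR-graph setting several non-terminals may share the same $\sigma$ value, and the standard Dijkstra argument must be supplemented with the ``deepest internal node with $\sigma=\tau^{*}$'' trick in order to produce a rule whose right-hand side consists entirely of already finalized symbols.
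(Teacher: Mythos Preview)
Your algorithm is exactly the one the paper gives: replace the FIFO queue by a min-priority queue keyed on $\stazza$, and append $(N\pro u,\tau)$ to $\lista$ at the moment $N$ is first finalized. The paper in fact does \emph{not} supply a correctness argument at all, so your Dijkstra-style proof goes beyond what is required.

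One small wrinkle in your correctness paragraph: you pick $M^{**}$ to be a deepest internal node with $\sigma(M_0)=\tau^{*}$, and then assert that the rule at $M^{**}$ ``sits in $\coda$''. But nothing prevents $M_0$ from being an already \emph{finalized} symbol that happens to have $\sigma(M_0)=\tau^{*}$; in that case the rule at $M^{**}$ may well have been extracted earlier (either to finalize $M_0$ or as a subsequent skip), so it need not be in $\coda$ and you get no contradiction. The fix is immediate: choose $M^{**}$ to be a deepest \emph{non-finalized} node of $T$ (the root $N^{*}$ witnesses that one exists). Its label then has $\sigma$-value $\le\tau^{*}$ by optimality and $\ge\tau^{*}$ by minimality of $\tau^{*}$ over $X$, hence $=\tau^{*}$; its children are finalized by maximality of depth; and since $M_0$ is not yet finalized, the rule at $M^{**}$ has genuinely not been extracted, so it is in $\coda$ with key $\tau^{*}<\tau$, giving the contradiction you want.
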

\begin{proof}
We run the same algorithm as in the proof of Proposition \ref{nonempty2} but with a few changes.

(i) The queue $\coda$ contains couples $(j,\tau)$ where $j$ is the index of a production rule and $\tau\in\bN$. Instead of a first-in first-out queue, this time the elements of $\coda$ are ordered in increasing order on $\tau$. Whenever we insert a new couple to $\coda$, we make sure to preserve the ordering; whenever we extract an element from $\coda$, we take one with $\tau$ smallest.

(ii) We initialize an array $\lista$, which at the beginning is empty, but that will contain the list that is required as output. During the $\update$ step, if we have taken the couple $(j,\tau)$ from $\coda$ and we update $\termina[i]$ from $\false$ to $\true$, then at the same time we also add the couple $(j,\tau)$ to $\lista$.

(iii) We initialize an array $\stazza$ of length $\abs{\cP}$: if the $j$-th production rule is $N\pro u$ then we initialize $\stazza[j]$ to be the sum of $\omega(c)$ for each terminal symbol $c$ in $u$ (with repetition). During the $\update$ step, if we have taken the couple $(j,\tau)$ from $\coda$, whenever we decrease $\rimasti[c]$ by one we also increase $\stazza[c]$ by $\tau$. If $\rimasti[c]$ goes down to zero, then we add the couple $(c,\stazza[c])$ to $\coda$.

(iv) At the end of the algorithm we give as output the array $\lista$, possibly truncated at the (unique) occurrence of a production rule for the symbol $S$.

Each production rule is added at most once to $\coda$, and the steps of adding a couple to $\coda$ is done in time $O(\log\abs{\cP})$. The rest of the operations of the algorithms are performed in total time $O(\norma{\cP})$. Thus the total running time of the algorithm is $O(\norma{\cP}\log\abs{\cP})$.
\end{proof}

If, for any reason, we have information that the language contains at least one word of bounded length, then there is an algorithm to explicitly write down such a word.

\begin{myprop}\label{explicit3}
There is an algorithm that, given a context-free grammar $(\cN,\cP,S)$ and an integer $r\ge0$, produces the following:

(i) A list of all non-terminal symbols $N\in\cN$ that can derive a word of length at most $r$.

(ii) For each $N$ in the list (i), the minimum possible length $l_N$ and a word $w_N\in\cA^*$ of length $l_N$ such that $w_N$ can be produced by means of a derivation starting at $N$.

The algorithm produces the list (i) and the lengths $l_N$ in time $O(\norma{\cP}r)$ and the words $w_N$ in time $O(\norma{\cP}r^2)$.
\end{myprop}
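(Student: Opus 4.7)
The plan is to specialise the bucket-based scheme of Propositions \ref{nonempty2} and \ref{explicit2} to the length function $\sigma \equiv 1$ on terminal symbols and to truncate at length $r$. I initialise $\occorrenze$ as usual, along with counters $\rimasti[j]$ (the number of non-terminal occurrences in the right-hand side of the $j$-th production) and $\stazza[j]$ (the number of terminal ones), an array $l[N]$ set to $\infty$, and an array $\omega[N]$ of witness productions set to undefined. Since all lengths are integers in $\{0,1,\ldots,r\}$, I replace the priority queue of Proposition \ref{explicit2} by $r+1$ buckets $B_0,\ldots,B_r$, and initially place every production $j$ with $\rimasti[j]=0$ and $\stazza[j]\le r$ into $B_{\stazza[j]}$.

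The main loop scans the buckets in order $\ell=0,1,\ldots,r$. When I extract a production $j=(N\pro u)$ from $B_\ell$, if $l[N]=\infty$ I set $l[N]\leftarrow\ell$, $\omega[N]\leftarrow j$, and append $N$ to a \emph{finalisation list}; I then walk through $\occorrenze[N]$ and, for each occurrence in a production $c$, decrement $\rimasti[c]$ by one and add $\ell$ to $\stazza[c]$, inserting $c$ into $B_{\stazza[c]}$ as soon as $\rimasti[c]$ hits zero (provided $\stazza[c]\le r$). Productions whose accumulated length would exceed $r$ are simply discarded. Correctness is the familiar Dijkstra argument: since all weights are non-negative integers, a production extracted from $B_\ell$ has had its entire right-hand side already evaluated with the true minimum contributions, so $l[N]\leftarrow\ell$ is the genuine minimum length (if this minimum is $\le r$; otherwise $N$ stays at $\infty$). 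Each production enters a bucket at most once and each non-terminal is finalised at most once, giving a total time of $O(\norma{\cP}+r)=O(\norma{\cP}r)$, which settles part (i).

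For part (ii), the finalisation list is automatically a topological order for the witness map: when $\omega[N]=(N\pro u)$ is recorded, the fact that $\rimasti$ for that production has dropped to zero forces every non-terminal occurring in $u$ to have been finalised earlier, hence to appear on the list before $N$. I traverse the list in order and, for each $N$, read $\omega[N]=(N\pro u_1\cdots u_m)$ and build $w_N$ by concatenation: each terminal $u_i$ is copied verbatim, and each non-terminal $u_i$ is replaced by the already-stored $w_{u_i}$. Since $l(w_N)=l[N]\le r$, a single $w_N$ is assembled in time $O(r)$, and the total cost over the list is $O(\abs{\cN}\,r)$, comfortably within the stated $O(\norma{\cP}r^2)$ budget.

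The main subtlety I foresee is justifying that the bucket version of Dijkstra remains correct on this \emph{and/or} hypergraph of productions, in which every non-terminal on the right-hand side of a rule must be finalised before the rule can fire. This is controlled by the same $\rimasti$ and $\stazza$ invariants already used in Proposition \ref{explicit2}, and the restriction $\stazza[j]\le r$ changes nothing about the underlying monotonicity argument: the $l$-values are discovered in non-decreasing order of the corresponding buckets, and every production discarded for exceeding $r$ could only ever contribute a value $>r$, hence is irrelevant to the minima we are after.
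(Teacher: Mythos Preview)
Your proof is correct and follows essentially the same approach as the paper: both implement a level-by-level Dijkstra-style search using the $\rimasti$/$\stazza$ counters, processing a production as soon as all its non-terminal occurrences are finalised and its accumulated length equals the current level $s\le r$. The only differences are cosmetic---you maintain explicit buckets $B_0,\ldots,B_r$ whereas the paper rescans the whole production list at each level $s$, and you assemble the witness words $w_N$ in a separate topological pass rather than on the fly inside $\update$---but the invariants, correctness argument, and complexity bounds are the same.
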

\begin{proof}
The algorithm is again similar to the one in the proof of Proposition \ref{nonempty2}, but with a few differences.

(i) We initialize $\occorrenze$ exactly as in Proposition \ref{nonempty2}.

(ii) We create three arrays $\termina$ and $\lunghezza$ and $\parola$ of length $\abs{\cN}$. For $i=0,...,\abs{\cN}-1$ we initialize $\termina[i]=\false$ and $\lunghezza[i]=0$ and $\parola[i]$ to be the empty string.

(iii) We create two arrays $\rimasti$ and $\stazza$ of length $\abs{\cP}$. For $j=0,...,\abs{\cP}-1$, if $(N,u)$ is the $j-th$ production rule, then we initialize $\rimasti[j]$ to be the number of non-terminal symbols in $u$ and $\stazza[j]$ to be the number of terminal symbols in $u$.

(iv) We create a queue $\coda$ that will contain some indices of some production rules. We initialize $\coda$ to be empty.

(v) We create an integer $s$ and we initialize $s=0$.

We now describe the $\update$ step. We take (and remove) an index $j$ from $\coda$, and we consider the $j$-th production rule $(N,u)$, where $N$ is the $i$-th non-terminal symbol. If $\termina[i]$ is $\true$ we immediately terminate the $\update$ step, otherwise we proceed. We set $\termina[i]=\true$, $\lunghezza[i]=s$ and $\parola[i]$ to be the word obtained from $u$ by substituting each occurrence of each non-terminal symbol with the corresponding entry of $\parola$ (i.e. we read $u$ and whenever we encounter an occurrence of the $i'$-th non-terminal symbol we substitute that occurrence with $\parola[i']$). We read the list $\occorrenze[i]$: when we read the couple $(c,d)$, we decrease by $1$ the number $\rimasti[c]$, we increase by $\lunghezza[i]$ the number $\stazza[c]$, and if $\rimasti[c]$ is $0$ and $\stazza[j]$ is $s$ then we add $c$ to $\coda$.

The algorithm now runs as follows: it initializes the data structures (i), (ii), (iii), (iv), (v) as described above. Then for each $s=0,...,r$ it runs the following: it initializes $\coda$ by adding all the indices $j$ such that $\rimasti[j]=0$ and $\stazza[j]=s$, and then it runs the $\update$ step until $\coda$ becomes empty. At the end of this process, the algorithm terminates and outputs the list of all non-terminal symbols such that $\termina[i]=\true$, and for each of them the integer $\lunghezza[i]$ and the string $\parola[i]$.

At any point during the algorithm, if $\termina[i]$ is $\true$ then $\lunghezza[i]$ and $\parola[i]$ contain the minimum length of a word and a word of that length that can be produced from the $i$-th non-terminal symbol. At any point during the algorithm, if the $j$-th production rule is $(N,u)$, then $\rimasti[j]$ is the number of non-terminal symbols that are contained in $u$ and still have $\termina$ equal to $\false$; similarly, $\stazza[j]$ is the number of terminal symbols in the word $u$ to which we sum also, for each occurrence of a non-terminal symbol with $\termina$ equal to $\true$, the length $\lunghezza$ of the shortest word that we can produce from that symbol. The idea is that during the iteration with a certain value of $s$, we find all the non-terminal symbols that can produce a word of length at most $s$; whenever we increase $s$ to $s+1$ we reset $\coda$ by rechecking all the production rules with $\rimasti$ equal to $0$, because it is possible that one of these production rules has $\rimasti$ equal to $0$ but $\stazza$ equal to $s+1$, so that it has been ignored up to this point. The proof that the algorithm works and the computation of the complexity are completely analogous to the proof of Proposition \ref{nonempty2}.
\end{proof}

\subsection{Unambiguous grammars}

We are interested in context-free grammars with the additional property of being unambiguous. This means that each word in the corresponding language can be obtained in an essentially unique way; this is made precise as follows.

\begin{mydef}
Let $(\cN,\cP,S)$ be a context-free grammar. We say that a derivation $$S\pro_{k_0,(S,u_0)}v_1\pro_{k_1,(N_1,u_1)}v_2\pro...\pro_{k_{r-1},(N_{r-1},u_{r-1})}v_r$$ is \textbf{leftmost} if each substitution $v_i\pro_{k_i,(N_i,u_i)}v_{i+1}$ acts on the leftmost non-terminal symbol of $v_i$.
\end{mydef}

\begin{mydef}
A context-free grammar $(\cN,\cP,S)$ is called \textbf{unambiguous} if every word in the corresponding context-free language can be obtained by means of a unique leftmost derivation.
\end{mydef}

We point out that, even if two grammars produce the same language, it is possible that one is ambiguous while the other is not. There are also languages which are context-free, but which don't have any unambiguous grammar: such languages are called \textit{inherently ambiguous}.

The reason why we are interested in unambiguous grammars is that it helps controlling the growth of the corresponding language; this will be explained more in detail in Section \ref{SectionComplexityGrowth}.

\begin{myrmk}
We point out that the notion of unambiguous grammar is related to a special kind of push-down automaton, the \textit{deterministic push-down automata}. To be precise, if a language is recognized by a deterministic push-down automaton, then it admits an unambiguous grammar (but the converse is false in general). In Propositions \ref{languagetrivial} and \ref{languagekernel} we will provide two grammars which are unambiguous: the reason behind this is that the corresponding language can be recognized by means of a deterministic push-down automaton.
\end{myrmk}

\subsection{The intersection of a context-free language and a regular language}

Let $\cA$ be an alphabet. Proposition \ref{cfreeintersection} tells us that the intersection of a context-free language $\cL$ and a regular language $\cR$ is a context-free language $\cL\cap\cR$. We provide here an explicit description of a grammar for $\cL\cap\cR$ in terms of a grammar for $\cL$ and of an automaton for $\cR$; we do so in order to provide bounds on the size of this grammar, as well as on the time required to algorithmically produce it; we also give particular attention to the extra property of being unambiguous.

\begin{myprop}\label{cfreeintersection2}
There is an algorithm that, given a context-free grammar $(\cN,\cP,S)$ for $\cL$ and a finite automaton $(Q,\delta,Q_0,Q_f)$ for $\cR$, produces a context-free grammar $(\cN',\cP',S')$ for $\cL\cap\cR$. Moreover the algorithm satisfies the following:

(i) $\norma{\cP'}$ is $O(\norma{\cP}\abs{Q}^{2+2\ram{\cP}})$ and $\ram{\cP'}=\ram{\cP}$.

(ii) The algorithm runs in time $O(\norma{\cP}\abs{Q}^{4+2\ram{\cP}})$.

(iii) If the $(Q,\delta,Q_0,Q_f)$ is deterministic, the algorithm runs in time $O(\norma{\cP}\abs{Q}^{2+2\ram{\cP}})$.

(iv) If $(\cN,\cP,S)$ is unambiguous and $(Q,\delta,Q_0,Q_f)$ is deterministic, then $(\cN',\cP',S')$ is unambiguous.
\end{myprop}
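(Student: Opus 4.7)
The plan is to use the classical product construction. The non-terminals of $\cN'$ are a fresh start symbol $S'$ together with all triples $[p,N,q]$ for $(p,N,q)\in Q\times\cN\times Q$; I add initial rules $S'\to[q_0,S,q_f]$ for each $(q_0,q_f)\in Q_0\times Q_f$. For each rule $N\to X_1X_2\cdots X_l$ in $\cP$ and each state sequence $(p_0,p_1,\ldots,p_l)\in Q^{l+1}$ satisfying $p_i\in\delta(p_{i-1},X_i)$ whenever $X_i\in\cA$ is a terminal, I add the rule $[p_0,N,p_l]\to Y_1Y_2\cdots Y_l$ with $Y_i=[p_{i-1},X_i,p_i]$ for non-terminal $X_i$ and $Y_i=X_i$ for terminal $X_i$. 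This keeps the right-hand sides unchanged in length, so $\ram{\cP'}=\ram{\cP}$.

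Correctness follows from the standard double induction: one shows that $[p,N,q]\der w$ in $\cP'$ if and only if $N\der w$ in $\cP$ and $q\in\delta^*(p,w)$ in the automaton, by induction on derivation length in both directions. Combined with the $S'$-rules, this gives $S'\der w$ in $\cP'$ if and only if $w\in\cL$ and the automaton accepts $w$, as required.

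For the size bound in (i), I decompose each original rule $N\to u$ as $u=w_0X_1w_1X_2\cdots X_kw_k$ with $k\le\ram{\cP}$ non-terminals $X_1,\ldots,X_k$ and $w_i\in\cA^*$. A valid state sequence is determined by: the initial state $p_0$ ($|Q|$ choices), the end state of each maximal terminal block $w_i$ (at most $|Q|$ further choices per block in the NFA case, and none in the deterministic case since the endpoint is forced), and the free endpoint state of each non-terminal $X_i$ ($|Q|$ choices each). This gives at most $|Q|^{2k+2}$ valid sequences in the NFA case and $|Q|^{k+1}$ in the DFA case. Multiplying by the length $1+l(u)$ of each new rule and summing over $(N,u)\in\cP$ yields $\norma{\cP'}=O(\norma{\cP}|Q|^{2+2\ram{\cP}})$.

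For the running times in (ii) and (iii), in the deterministic case the algorithm enumerates the $|Q|^{k+1}$ valid sequences directly by choosing $p_0$ and the endpoint of each non-terminal, then traversing each terminal block deterministically; this produces the grammar in time $O(\norma{\cP}|Q|^{2+2\ram{\cP}})$, matching the size bound. In the nondeterministic case, to enumerate the allowable end states of each terminal block starting at each given state, we precompute the iterated transitions $\delta^*(p,w_i)$ for every $p\in Q$ by standard subset propagation through $w_i$ in time $O(|Q|^2|w_i|)$; this extra factor of $|Q|^2$ over the size bound accounts for the exponent $4+2\ram{\cP}$ in part (ii). For unambiguity in (iv), every leftmost derivation in $(\cN',\cP',S')$ of a word $w$ projects, by forgetting state decorations, to a leftmost derivation of $w$ in $\cP$, and also determines a sequence of states visited by the automaton while reading $w$; if $\cP$ is unambiguous the projected derivation is uniquely determined by $w$, and if the automaton is deterministic so is the state sequence, and together they pin down the derivation in $\cP'$. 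The main obstacle will be keeping the bookkeeping tight in parts (ii)--(iii): we must avoid naively enumerating all tuples in $Q^{l+1}$, since a rule with long terminal blocks would otherwise inflate the count beyond the claimed polynomial dependence on $|Q|$, and the terminal-block decomposition above is exactly what rescues the bound.
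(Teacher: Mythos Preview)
Your proposal is correct and follows essentially the same approach as the paper: the paper defines non-terminals $N_{p\to q}$ and, for each rule $N\to w_0A_1w_1\cdots A_kw_k$ in $\cP$ and each compatible choice of states $p,q,p_1,q_1,\ldots,p_k,q_k$, adds the rule $N_{p\to q}\to w_0(A_1)_{p_1\to q_1}w_1\cdots(A_k)_{p_k\to q_k}w_k$, together with start rules $S'\to S_{q_0\to q_f}$. The only cosmetic difference is that you first describe the construction letter-by-letter with state sequences $(p_0,\ldots,p_l)$ and then pass to the terminal-block decomposition for the size and time analysis, whereas the paper works with the block decomposition from the outset; the correctness lemma, the $|Q|^{2+2\ram{\cP}}$ count, the extra $|Q|^2$ factor in the nondeterministic case, and the unambiguity argument all match.
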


The grammar $(\cN',\cP',S')$ has non-terminal symbols $N_{p\rar q}$ for $N\in\cN$ and $p,q\in Q$. The grammar has an extra symbol $S'$ which we set as initial symbol.
 
Suppose we are given a production rule $(N\pro w_0A_1w_1A_2...A_kw_k)\in\cP$ with $w_0,...,w_k\in\cA^*$ and $A_1,...,A_k\in\cN$; suppose we are given $p,q,p_1,q_1,...,p_k,q_k\in Q$; suppose $\delta(p,w_0)\ni p_1$ and $\delta(q_i,w_i)\ni p_{i+1}$ for $i=1,...,k-1$ and $\delta(q_k,w_k)\ni q$. Then we define a production rule in $\cP'$
$$N_{p\rar q} \ \pro \ w_0(A_1)_{p_1\rar q_1}w_1...(A_k)_{p_k\rar q_k}w_k$$
For each $q_0\in Q_0$ and $q_f\in Q_f$ we define a production rule in $\cP'$
$$S' \ \pro \ S_{q_0\rar q_f}$$
The rest of this section is dedicated to the proof that this grammar has the desired properties.

\begin{mylemma}\label{spezzaparola}
Let $(Q,\delta,Q_0,Q_f)$ be a finite automaton. Let $w=w'w''$ with $w',w''\in\cA^*$ be a word and let $p,q\in Q$ be such that $\delta(p,w)\ni q$. Then there is a state $r\in Q$ such that $\delta(p,w')\ni r$ and $\delta(r,w'')\ni q$.
\end{mylemma}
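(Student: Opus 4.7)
The plan is to establish the following semigroup-type identity for the transition function: for every subset $T\subseteq Q$ and every pair of words $u,v\in\cA^*$, one has
$$\delta(T,uv)=\delta(\delta(T,u),v).$$
Once this identity is available, the lemma follows immediately by applying it with $T=\{p\}$, $u=w'$, $v=w''$. Indeed, by the identity and the iterative definition of $\delta$ we have
$$\delta(p,w'w'')=\delta(\delta(p,w'),w'')=\bigcup_{r\in\delta(p,w')}\delta(r,w''),$$
so the assumption $q\in\delta(p,w)$ forces the existence of some $r\in Q$ with $r\in\delta(p,w')$ and $q\in\delta(r,w'')$, which is exactly the conclusion.

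To prove the identity I would proceed by induction on $l(v)$. The base case $v=\epsilon$ is immediate, since the iterative definition of $\delta$ given in Section \ref{SectionRegularLanguages} (with $T_0=T$) gives $\delta(T,\epsilon)=T$. For the inductive step, write $v=v'c$ with $c\in\cA$ and a shorter word $v'$. Unfolding the definition of $\delta$ one letter at a time, $\delta(T,uv'c)$ is obtained by first computing $\delta(T,uv')$ and then taking the union of $\delta(q',c)$ over $q'\in\delta(T,uv')$. The inductive hypothesis identifies $\delta(T,uv')$ with $\delta(\delta(T,u),v')$, and then one further application of the recursive formula re-packages the union as $\delta(\delta(T,u),v'c)=\delta(\delta(T,u),v)$, closing the induction.

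There is no real obstacle here: the only care needed is in the bookkeeping, since the paper overloads $\delta$ to act both on single states and on subsets of states, so one has to be consistent about viewing $\delta(p,w')$ as shorthand for $\delta(\{p\},w')$ throughout. Beyond that, the argument is a routine unfolding of the iterative definition of the extended transition function.
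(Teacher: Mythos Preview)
Your argument is correct and is essentially an explicit unpacking of the paper's one-line proof (``Immediate from the definitions''). One very minor remark: the second equality in your displayed chain, $\delta(\delta(p,w'),w'')=\bigcup_{r\in\delta(p,w')}\delta(r,w'')$, is not literally the letter-by-letter recursion but the easy companion identity $\delta(T,v)=\bigcup_{r\in T}\delta(r,v)$, which follows by the same kind of induction; alternatively one can bypass both identities by simply noting that $q\in\delta(p,w)$ unwinds to a chain of states $p=q_0,q_1,\dots,q_{l(w)}=q$ with $q_{i+1}\in\delta(q_i,w_{i+1})$ and taking $r=q_{l(w')}$.
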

\begin{proof}
Immediate from the definitions.
\end{proof}

\begin{mylemma}\label{lemma1intersection}
For $w\in\cA^*$ we have that $N_{p\rar q}\der w$ in the grammar $\cP'$ if and only if the following two conditions hold:

(i) $N\der w$ in the grammar $\cP$.

(ii) $\delta(p,w)\ni q$ in the automaton.
\end{mylemma}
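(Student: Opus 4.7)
The plan is to prove both implications by induction on the length of a derivation, using Lemma \ref{spezzaparola} to split the reading of a word by the automaton whenever we split it according to a production rule of $\cP$.

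For the forward direction, I would induct on the number of steps in a derivation $N_{p\rar q}\der w$ in the grammar $\cP'$. The first step must apply some production rule of the form $N_{p\rar q}\pro w_0(A_1)_{p_1\rar q_1}w_1\ldots(A_k)_{p_k\rar q_k}w_k$, which by the construction of $\cP'$ exists only if there is a rule $N\pro w_0A_1w_1\ldots A_kw_k$ in $\cP$ together with transitions $\delta(p,w_0)\ni p_1$, $\delta(q_i,w_i)\ni p_{i+1}$ for $i=1,\ldots,k-1$, and $\delta(q_k,w_k)\ni q$. The rest of the derivation decomposes as subderivations $(A_i)_{p_i\rar q_i}\der u_i$ with $w=w_0u_1w_1\ldots u_kw_k$, each strictly shorter, so the inductive hypothesis gives $A_i\der u_i$ in $\cP$ and $\delta(p_i,u_i)\ni q_i$. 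Concatenating derivations yields $N\der w$ in $\cP$; chaining the transitions yields $\delta(p,w)\ni q$.

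For the backward direction, I would induct on the length of a derivation $N\der w$ in $\cP$. If the first step is $N\pro w_0A_1w_1\ldots A_kw_k$ followed by subderivations $A_i\der u_i$ with $w=w_0u_1w_1\ldots u_kw_k$, then applying Lemma \ref{spezzaparola} repeatedly to the decomposition $w=w_0\cdot u_1\cdot w_1\cdots u_k\cdot w_k$ (together with the hypothesis $\delta(p,w)\ni q$) produces states $p_1,q_1,\ldots,p_k,q_k\in Q$ satisfying $\delta(p,w_0)\ni p_1$, $\delta(p_i,u_i)\ni q_i$, $\delta(q_i,w_i)\ni p_{i+1}$ for $i<k$, and $\delta(q_k,w_k)\ni q$. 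By the construction of $\cP'$, the corresponding production rule $N_{p\rar q}\pro w_0(A_1)_{p_1\rar q_1}w_1\ldots (A_k)_{p_k\rar q_k}w_k$ is present, and the inductive hypothesis gives subderivations $(A_i)_{p_i\rar q_i}\der u_i$ in $\cP'$, which assemble into the desired derivation $N_{p\rar q}\der w$.

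The argument is essentially bookkeeping, and I do not expect a genuine obstacle; the only point that needs care is to induct on the length of the $\cP$-derivation (rather than on $|w|$) in the backward direction, since $w$ can easily be the empty word while the derivation is long. Lemma \ref{spezzaparola} applied inductively at each of the $2k+1$ cut points provides exactly the intermediate states required to invoke the construction of $\cP'$, making the two inductions perfectly symmetric.
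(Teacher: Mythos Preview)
Your proposal is correct and follows essentially the same approach as the paper: both directions are handled by induction on the length of the relevant derivation (in $\cP'$ for one implication, in $\cP$ for the other), with Lemma \ref{spezzaparola} supplying the intermediate states in the automaton. The paper presents the two implications in the opposite order but the arguments are otherwise identical.
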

\begin{proof}
We prove that if $N\der w$ and $\delta(p,w)\ni q$ then $N_{p\rar q}\der w$, by induction on the length of the derivation $N\der w$. The base step, when the derivation has length one, is trivial. For the inductive step, suppose $N\pro w_0A_1w_1A_2...A_kw_k\der w_0v_1w_1v_2...v_kw_k=w$, meaning that $A_i\der v_i$ for $i=1,...,k$, and suppose $\delta(p,w)=q$. Using repeatedly Lemma \ref{spezzaparola} we find states $p_1,q_1,...,p_k,q_k\in Q$ such that $\delta(p,w_0)\ni p_1$, $\delta(p_i,v_i)\ni q_i$ for $i=1,...,k$, $\delta(q_i,w_i)\ni p_{i+1}$ for $i=1,...,k-1$ and $\delta(q_k,w_k)\ni q$. By definition of the grammar $\cP'$ we have $N_{p\rar q}\pro w_0(A_1)_{p_1\rar q_1}w_1(A_2)_{p_2\rar q_2}...(A_k)_{p_k\rar q_k}w_k$ and by inductive hypothesis we have $(A_i)_{p_i\rar q_i}\der v_i$. It follows that $N_{p\rar q}\der w$ as desired.

We prove that if $N_{p\rar q}\der w$ then $N\der w$ and $\delta(p,w)\ni q$, by induction on the length of the derivation $N_{p\rar q}\der w$. The base step, when the derivation has length one, is trivial. For the inductive step, suppose $N_{p\rar q}\pro w_0(A_1)_{p_1\rar q_1}w_1(A_2)_{p_2\rar q_2}...(A_k)_{p_k\rar q_k}w_k\der w_0v_1w_1v_2...v_kw_k=w$, meaning that $(A_i)_{p_i\rar q_i}\der v_i$ for $i=1,...,k$. By definition of the grammar $\cP'$ we have $N\pro w_0A_1w_1A_2...A_kw_k$ and by inductive hypothesis we have $A_i\der v_i$ for $i=1,...,k$, yielding that $N\der w$. By definition of the grammar $\cP'$ we have that $\delta(p,w_0)\ni p_1$ and $\delta(q_i,w_i)\ni p_{i+1}$ for $i=1,...,k-1$ and $\delta(q_k,w_k)\ni q$. By inductive hypothesis we have that $\delta(p_i,v_i)\ni q_i$ for $i=1,...,k$. It follows that $\delta(p,w)=\delta(p,w_0v_1w_1v_2...v_kw_k)\supseteq\delta(p_1,v_1w_1v_2...v_kw_k)\supseteq\delta(q_1,w_1v_2...v_kw_k)\supseteq...\supseteq\delta(q_k,w_k)\ni q$, and the conclusion follows.
\end{proof}

It immediately follows that the context-free grammar $(\cN',\cP',S')$ produces the language $\cL\cap\cR$. Each production rule in $\cP$ gives us at most $\abs{Q}^{2+2\ram{\cP}}$ production rules in $\cP'$, each of the same length and with the same number of non-terminal symbols; this easily proves that $\norma{\cP'}$ is $O(\norma{\cP}\abs{Q}^{2+2\ram{\cP}})$ and $\ram{\cP'}=\ram{\cP}$.

In order to estimate the complexity of explicitly performing the above construction, we first need to estimate the complexity of the membership problem, for a word of length $l$, to the language generated by the automaton $(Q,\delta,Q_0,Q_f)$. According to Section 4.3.3 of \cite{HMU06}, the membership problem can be solved in time $O(\abs{Q}^2l)$. Now, for every production rule $(N,u)=(N,w_0A_1w_1A_2...A_kw_k)\in\cP$ and for every $p,q,p_1,q_1,...,p_k,q_k\in Q$, we have to test the (equivalent of) membership problem for the words $w_0,...,w_k$: this can be done in time $O(\abs{Q}^2(l(w_0)+...+l(w_k)))$ which is at most $O(\abs{Q}^2(1+l(u)))$; repeating for all choices of $p,q,p_1,q_1,...,p_k,q_k\in Q$ can be done in time $O(\abs{Q}^{4+2k}(1+l(u)))$; repeating for all production rules $(N,u)\in\cP$ can be done in time $O(\abs{Q}^{4+2\ram{\cP}}\norma{\cP})$. In the case the $(Q,\delta,Q_0,Q_f)$ is deterministic, according to Section 4.3.3 of \cite{HMU06}, the membership problem for a word of length $l$ can be solved in time $O(l)$ instead of $O(\abs{Q}^2l)$. The same computation as above gives complexity $O(\abs{Q}^{2+2\ram{\cP}}\norma{\cP})$.

\begin{mylemma}\label{lemma2intersection}
If $(\cN,\cP,S)$ is unambiguous and $(Q,\delta,Q_0,Q_f)$ is deterministic, then $(\cN',\cP',S')$ is unambiguous.
\end{mylemma}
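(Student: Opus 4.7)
The plan is to show directly that every word in the language of $(\cN',\cP',S')$ admits a unique leftmost derivation, by combining the unambiguity of $\cP$ with the determinism of the automaton. Given two leftmost derivations $D_1, D_2$ of the same word $w$ in $\cP'$, I would first apply the state-forgetting projection that sends each symbol $N_{p\rar q}$ to $N\in\cN$ and each production rule of $\cP'$ to its underlying production of $\cP$. After the initial step $S'\pro S_{q_0\rar q_f}$, this projection carries $D_1$ and $D_2$ to two leftmost derivations of $w$ from $S$ in $\cP$. By unambiguity of $\cP$ those projected derivations coincide, so at every step $D_1$ and $D_2$ apply rules lifting the same underlying base production $N\pro w_0 A_1 w_1\dots A_k w_k$, and the decomposition of the suffix of $w$ derived from each non-terminal occurrence is forced.

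The remaining work is to show by induction on the step number that the state decorations are also forced. For the initial step $S'\pro S_{q_0\rar q_f}$, the state $q_0$ is forced because $Q_0$ is a singleton, and $q_f$ is forced because by determinism $\delta(q_0,w)$ contains a unique element, which must lie in $Q_f$ since $w$ belongs to the language. For the inductive step, assume $D_1$ and $D_2$ coincide after step $i$, so the leftmost non-terminal of the intermediate word is some uniquely determined $N_{p\rar q}$. The common projected $\cP$-derivation fixes the decomposition $w_0 v_1 w_1\dots v_k w_k$ of the substring of $w$ derived from this occurrence, with $A_i\der v_i$. Determinism of $\delta$ then forces $p_1=\delta(p,w_0)$, and then inductively $q_i=\delta(p_i,v_i)$ and $p_{i+1}=\delta(q_i,w_i)$ for all $i$. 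Finally Lemma \ref{lemma1intersection} applied to $N_{p\rar q}\der w_0 v_1 w_1\dots v_k w_k$ together with determinism guarantees that the state $q_k$ so produced equals $q$, so the applied production rule in $\cP'$, with all its state labels, is the same in $D_1$ and $D_2$.

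The only delicate point is the bookkeeping around leftmost derivations: one must verify that the portion of $w$ eventually produced by a given non-terminal occurrence is determined by the projected $\cP$-derivation and the position of the occurrence, so that the words $v_i$ used above are genuinely well-defined independently of $D_1$ versus $D_2$. Once this is in place, the inductive argument shows that at every step of the leftmost derivation in $\cP'$ both the base rule and every state decoration are uniquely determined by $w$, whence $D_1=D_2$ and $(\cN',\cP',S')$ is unambiguous.
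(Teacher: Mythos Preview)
Your proposal is correct and follows essentially the same approach as the paper's proof: project a leftmost derivation in $\cP'$ to one in $\cP$ by forgetting the state labels, invoke unambiguity of $\cP$ to pin down the underlying productions, and then use determinism of the automaton to recover the state decorations uniquely. The paper compresses this into three sentences, while you spell out the initial step $S'\pro S_{q_0\rar q_f}$ and the inductive bookkeeping more carefully; the invocation of Lemma~\ref{lemma1intersection} at the end is not really needed (the consistency $\delta(q_k,w_k)=q$ is automatic since you are analysing an actual $\cP'$-derivation), but it does no harm.
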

\begin{proof}
Suppose we have a leftmost derivation $N_{p\rar q}\pro...\pro w$ in $(\cN',\cP',S')$. Removing the labels gives a leftmost derivation $N\pro...\pro w$ in $(\cN,\cP,S)$, and thus the productions of this derivation are uniquely determined. Since the automaton is deterministic, the labels $p_i,q_i$ to be added in each of these productions are uniquely determined too. Thus the leftmost derivation $N_{p\rar q}\pro...\pro w$ is uniquely determined too. This shows that $(\cN',\cP',S')$ is unambiguous.
\end{proof}

The proof of Proposition \ref{cfreeintersection2} is thus complete.

\subsection{A grammar for the language of the trivial element}

The result of this section is a particular case of the result of next Section \ref{sec:languagekernel}; we think it's useful to examine the proof in this easier case first.

Let $F_m$ be a free group generated by $b_1,...,b_m$ and consider the alphabet $\cB=\{b_1,...,b_m,\ol{b}_1,...,\ol{b}_m\}$. Let $\cL(1)\subseteq\cB^*$ be the language of all words representing the trivial element $1\in F_m$.

\begin{myprop}\label{languagetrivial}
The language $\cL(1)$ is context-free and has a grammar $(\cN,\cP,S)$ such that:

(i) $\norma{\cP}$ is $O(m^2)$ and $\ram{\cP}=2$.

(ii) $(\cN,\cP,S)$ is unambiguous.
\end{myprop}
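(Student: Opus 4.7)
The plan is to exhibit an explicit unambiguous grammar for $\cL(1)$ and then read off the required properties. The guiding principle is to give each word in $\cL(1)$ a canonical derivation based on the \emph{leftmost matching} decomposition.

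Namely, for any non-empty $w\in\cL(1)$ with first letter $c$, the stack-based free reduction pairs the initial $c$ with a unique later occurrence of $\ol{c}$; equivalently, $\ol{c}$ appears at the smallest index $k\ge 2$ for which the prefix $w_{[1..k]}$ lies in $\cL(1)$. Writing $w=c\,v\,\ol{c}\,u$ with this $\ol{c}$, both $v$ and $u$ lie in $\cL(1)$, and $v$ moreover satisfies the extra condition that no proper prefix of $v$ lies in $\cL(1)$ and is immediately followed in $v$ by the letter $\ol{c}$ (otherwise the leftmost matching would close earlier). This decomposition is unique, and applies recursively to $v$ and to $u$.

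To encode this grammatically, I would take a start symbol $S$ together with, for each $c\in\cA$, an auxiliary non-terminal $T_c$ generating exactly the words satisfying the ``interior'' condition above with respect to $c$. The productions then have the schematic form
\[
S\to\epsilon\ \mid\ c\,T_c\,\ol{c}\,S \qquad (c\in\cA),
\]
together with analogous productions for each $T_c$ that mirror the same recursive decomposition for valid primitive interiors. By construction, each right-hand side contains at most two non-terminal symbols, so $\ram{\cP}=2$ is immediate.

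Then I would verify the properties. That $\mathcal{L}(S)\subseteq\cL(1)$ is a routine induction on derivation length, since every production preserves the property of reducing to the identity in $F_n$; the reverse inclusion follows by applying the canonical decomposition above recursively. Unambiguity follows from uniqueness of this decomposition: in a leftmost derivation, the first letter of the remaining word pins down the $S$-production (or the currently active $T_c$-production) that must be used, and the analogous rigidity holds within the interiors, so every word admits a unique leftmost derivation. This is essentially the same reason, alluded to in the remark preceding the proposition, that $\cL(1)$ being recognised by a deterministic push-down automaton forces the existence of an unambiguous grammar.

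The main technical obstacle is to organise the grammar so that $\norma{\cP}=O(n)$ rather than $O(n^2)$: a naive encoding, listing productions separately for each pair $(c,d)$ of letters appearing as the interior of a $T_c$, has quadratic total size. I would therefore package the productions for the $T_c$'s so as to share as much structure as possible across different values of $c$, keeping the total number of rules (and their combined length) linear in $n$. The correctness and unambiguity arguments above are insensitive to this packaging, so this last step can be carried out independently; it is where the bulk of the bookkeeping lies, and it is the part I expect to require the most care.
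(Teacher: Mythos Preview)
Your approach is essentially identical to the paper's. The paper also builds the grammar around the leftmost-matching decomposition, with non-terminals $A_i,\ol{A}_i$ (generating words $\equiv a_i$, resp.\ $\equiv\ol{a}_i$, having no proper prefix with that property) playing exactly the role of your $T_c$; the only cosmetic difference is that the closing letter is absorbed into the non-terminal, so the paper writes $S\to \ol{a}_j A_j S$ where you write $S\to c\,T_c\,\ol{c}\,S$. Correctness and unambiguity are proved by precisely the inductive arguments you sketch (Lemmas~\ref{lemma1trivial}--\ref{lemma3trivial}).

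On your final paragraph: the concern is legitimate, but you should know that the paper's grammar is exactly the ``naive'' one you warn against. Each $A_i$ carries one production per possible first letter, and with $2n$ such non-terminals this gives $\Theta(n^2)$ total size; the paper asserts $\norma{\cP}=O(n)$ without verifying it, and the grammar as written does not meet that bound. So you need not labour over a special ``packaging'' step---the paper does not carry one out either. (The slip is harmless downstream: the applications in Section~\ref{SectionComplexity1} go through the more general Proposition~\ref{languagekernel}, whose stated size bound is consistent with its grammar.)
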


The grammar $(\cN,\cP,S)$ has non-terminal symbols $S,B_i,\ol{B}_i$ for $i=1,...,m$. The initial symbol is $S$ and the production rules are as follows:

\vspace{0.2cm}

\begin{tabular}{l l l p{0.5cm} l}
$S$ & $\pro$ & $\epsilon \quad | \quad b_j\ol{B}_jS \quad | \quad \ol{b}_jB_jS$ && for $j=1,...,m$. \\
$B_i$ & $\pro$ & $b_i \quad | \quad \ol{b}_iB_iB_i \quad | \quad b_j\ol{B}_jB_i \quad | \quad \ol{b}_jB_jB_i$ && for $j=1,...,m$ with $j\not=i$. \\
$\ol{B}_i$ & $\pro$ & $\ol{b}_i \quad | \quad b_i\ol{B}_i\ol{B}_i \quad | \quad b_j\ol{B}_j\ol{B}_i \quad | \quad \ol{b}_jB_j\ol{B}_i$ && for $j=1,...,m$ with $j\not=i$. \\
\end{tabular}

\vspace{0.2cm}

The rest of this section is dedicated to the proof that this grammar has the desired properties. When we say that $w'$ is a \textbf{proper} initial segment of $w$ we mean $w'\not=\epsilon$ and $w'\not=w$. The following is a standard lemma about cancellations in words, and the proof will be omitted.

\begin{mylemma}\label{segmentobanale}
Let $w,w_r\in\cB^*$ be non-empty words such that $w\equiv w_r$ in $F_m$ and $w_r$ is reduced. If the first letter of $w$ and the first letter of $w_r$ are different, then there is a proper initial segment $w'$ of $w$ such that $w'\equiv 1$ in $F_m$.
\end{mylemma}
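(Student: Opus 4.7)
The plan is to interpret $w$ as a combinatorial walk in the Cayley graph of $F_n$ with respect to the basis $a_1,\ldots,a_n$. Since $F_n$ is free, this Cayley graph is a $2n$-regular tree $T$; the walk spelled by $w$ starts at the identity and ends at the element $w\equiv w_r\in F_n$. The unique reduced edge path in $T$ from $1$ to $w_r$ is the one spelled by the reduced word $w_r$ itself. By hypothesis the first letter of $w$ differs from that of $w_r$, so the very first edge of the walk $w$ is not an edge of the geodesic from $1$ to $w_r$.

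Write $w=c_1c_2\cdots c_L$ with each $c_i\in\cA$. The key step is to argue that there exists an index $k$ with $2\le k\le L$ such that $c_1c_2\cdots c_k\equiv 1$ in $F_n$. This is the standard tree lemma that a walk leaving a vertex $v$ along an edge not belonging to the geodesic to its endpoint must revisit $v$; equivalently, running the free-reduction algorithm on $w$, the letter $c_1$ cannot survive to the reduced form (otherwise the reduced form would start with $c_1$), so it must be cancelled at some step, necessarily against some later letter $c_k=\overline{c_1}$ whose intermediate block $c_2\cdots c_{k-1}$ has already reduced to the empty word. Either formulation yields $c_1\cdots c_k\equiv 1$. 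Take $k$ to be the smallest such index.

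Next I would verify that $k<L$: otherwise $w\equiv 1$, contradicting that $w_r$ is a non-empty reduced word (which is therefore non-trivial in $F_n$). Setting $w':=c_1c_2\cdots c_k$ then gives a proper initial segment of $w$ with $w'\equiv 1$, as required. The only real content is the tree lemma recalled above; every other step is bookkeeping. I expect no serious obstacle, since this is the combinatorial heart of free reduction and the author explicitly notes that the proof will be omitted as a standard cancellation fact.
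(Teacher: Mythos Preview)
Your argument is correct. The paper explicitly omits the proof of this lemma, calling it ``a standard lemma about cancellations in words,'' so there is no approach in the paper to compare against; your Cayley-tree/stack-reduction argument is precisely the sort of standard justification the author has in mind, and you have correctly verified both conditions in the paper's definition of \emph{proper} initial segment ($w'\neq\epsilon$ via $k\ge 2$, and $w'\neq w$ via $k<L$ using that $w_r$ is non-empty and reduced).
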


\begin{mylemma}\label{lemma1trivial}
For $w\in\cB^*$ and $i=1,...,m$ we have that $B_i\der w$ (resp. $\ol{B}_i\der w$) if and only if the following two conditions hold:

(i) $w\equiv b_i$ in $F_m$ (resp. $w\equiv\ol{b}_i$).

(ii) No proper initial segment $w'$ of $w$ satisfies $w'\equiv b_i$ in $F_m$ (resp. $w'\equiv\ol{b}_i$).
\end{mylemma}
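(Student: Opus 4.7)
The plan is to prove both implications by a single simultaneous induction that treats the parallel statement for $\ol{A}_i$ at the same time, since the productions for $A_i$ refer to $\ol{A}_j$ (for $j\ne i$) and vice versa. For $(\Rightarrow)$ I would induct on the length of the derivation $A_i\der w$; for $(\Leftarrow)$ on $l(w)$. The single recurring technical tool I would rely on is the following consequence of Lemma \ref{segmentobanale}: if $u\in\cA^*$ satisfies $u\equiv g_1g_2$ in $F_n$ with the concatenation $g_1g_2$ already reduced (no cancellation between $g_1$ and $g_2$), then some initial segment of $u$ represents $g_1$ in $F_n$. This ``tree fact'' follows by a short induction on $l(u)$: either the first letter of $u$ coincides with the first letter of the reduced form of $g_1g_2$ and we strip it off, or Lemma \ref{segmentobanale} produces a proper initial segment of $u$ equal to $1$ that we peel off before recursing.

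For $(\Rightarrow)$, the base case $A_i\pro a_i$ is immediate. In the inductive step, consider the first production used. The typical case is $A_i\pro\ol{a}_iA_iA_i\der\ol{a}_iv_1v_2=w$ with $A_i\der v_1$ and $A_i\der v_2$. By induction $v_1,v_2\equiv a_i$, so $w\equiv a_i$. Every proper initial segment of $w$ has the form $\ol{a}_i$, or $\ol{a}_iv_1'$ with $v_1'$ a non-empty initial segment of $v_1$, or $\ol{a}_iv_1v_2'$ with $v_2'$ a non-empty proper initial segment of $v_2$. The first evaluates to $\ol{a}_i$ and the case $v_1'=v_1$ gives $1$. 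If a segment $\ol{a}_iv_1'$ with $v_1'$ a proper initial segment of $v_1$ equalled $a_i$, then $v_1'\equiv a_i^2$, and the tree fact would produce a proper initial segment of $v_1$ representing $a_i$, contradicting the inductive hypothesis on $v_1$; if a segment $\ol{a}_iv_1v_2'$ with $v_2'$ a proper initial segment of $v_2$ equalled $a_i$, then $v_2'\equiv a_i$, directly contradicting the inductive hypothesis on $v_2$. The productions $A_i\pro a_j\ol{A}_jA_i$ and $A_i\pro\ol{a}_jA_jA_i$ (with $j\ne i$) follow the same template, invoking the tree fact with the reduced products $\ol{a}_ja_i$ and $a_ja_i$.

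For $(\Leftarrow)$, suppose (i) and (ii) hold for $w$. If $l(w)=1$ then $w=a_i$. Otherwise, the first letter of $w$ cannot be $a_i$, since then the length-$1$ initial segment would already equal $a_i$ and violate (ii). If the first letter is $\ol{a}_i$, write $w=\ol{a}_iu$ so that $u\equiv a_i^2$; the tree fact furnishes an initial segment of $u$ representing $a_i$, and we let $v_1$ be the shortest such and set $v_2$ to be the remaining suffix. By construction $v_1$ satisfies (i) and (ii). Any proper initial segment $v_2'$ of $v_2$ with $v_2'\equiv a_i$ would yield a proper initial segment $\ol{a}_iv_1v_2'$ of $w$ equal to $a_i$, contradicting (ii) for $w$; so (ii) holds for $v_2$ too. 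The inductive hypothesis now gives $A_i\der v_1$ and $A_i\der v_2$, and $A_i\pro\ol{a}_iA_iA_i$ closes the case. If the first letter is $a_j$ (resp.\ $\ol{a}_j$) for some $j\ne i$, the argument is identical, using the tree fact to locate the shortest initial segment of the tail representing $\ol{a}_j$ (resp.\ $a_j$) and invoking the parallel statement for $\ol{A}_j$ (resp.\ $A_j$) obtained by the simultaneous induction.

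The main obstacle is purely the bookkeeping: every case of both directions requires checking that the two halves $v_1,v_2$ of $w$ genuinely inherit the minimal-initial-segment condition (ii), which each time reduces to one application of the tree fact. Once that fact is in hand, each of the several symmetric cases is a short verification following the same template, so the difficulty is organizational rather than conceptual.
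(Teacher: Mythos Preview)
Your proposal is correct and follows essentially the same approach as the paper: induction on derivation length for $(\Rightarrow)$ and on $l(w)$ for $(\Leftarrow)$, with Lemma~\ref{segmentobanale} doing the work of verifying condition (ii). The only cosmetic difference is that you package the repeated applications of Lemma~\ref{segmentobanale} into a single ``tree fact'', whereas the paper applies it directly each time (e.g.\ from $\ol{a}_jx'\equiv a_i$ it extracts a trivial proper initial segment of $\ol{a}_jx'$ and hence an initial segment of $x$ representing $a_j$); this is the same argument in slightly different clothing.
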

\begin{proof}
We prove that if $B_i\der w$ then (i) and (ii) hold, by induction on the length of the derivation. The base step, when the derivation has length one, is trivial. Suppose $B_i\der w$ and suppose the first production applied is $B_i\pro\ol{b}_jB_jB_i$ for some $j\in\{1,...,m\}$ (the other case is analogous): then we can write $w=\ol{b}_jxy$ with $B_j\der x$ and $B_i\der y$. By inductive hypothesis $x\equiv b_j$ and $y\equiv b_i$, so that $w\equiv b_i$, yielding (i). Suppose a proper initial segment $w'$ of $w$ satisfies $w'\equiv b_i$: then it is either of the form $w'=\ol{b}_jx'$ where $x'$ is an initial segment of $x$, or of the form $w'=\ol{b}_jxy'$ where $y'$ is a proper initial segment of $y'$. If $w'=\ol{b}_jx'$ then we have $\ol{b}_jx'\equiv b_i$ and thus by Lemma \ref{segmentobanale} we find a proper initial segment $x''$ of $x$ such that $\ol{b}_jx''\equiv 1$, giving $x''\equiv b_j$ and contradicting the inductive hypothesis. If $w'=\ol{b}_jxy'$ then we have $\ol{b}_jxy'\equiv b_i$ and thus $y'\equiv b_i$, contradicting again the inductive hypothesis. This yields (ii).

We prove that if (i) and (ii) holds for $w$ then $B_i\der w$, by induction on the length of $w$. The base step, when $w$ has length one, is trivial. If $w$ has length bigger than one, then by (ii) the first letter of $w$ can't be $b_i$; suppose the first letter of $w$ is $\ol{b}_j$ for some $j\in\{1,...,m\}$ (the other case is analogous). Since $w\equiv b_i$ and the first letter of $w$ is $\ol{b}_j$, by Lemma \ref{segmentobanale} we can find a decomposition $w=\ol{b}_jxy$ with $x\equiv b_j$; without loss of generality we can also assume that $x$ has minimum possible length for such a decomposition. We have that $x\equiv b_j$ and no proper initial segment $x'$ of $x$ satisfies $x'\equiv b_j$, and thus by inductive hypothesis we must have $B_j\der x$. Since $w\equiv b_i$ and $w=\ol{b}_jxy$ we obtain that $y\equiv b_i$; since no proper initial segment $w'$ of $w$ satisfies $w'\equiv b_i$, it follows that no proper initial segment $y'$ of $y$ satisfies $y'\equiv b_i$; it follows by inductive hypothesis that $B_i\der y$. But then we have $B_i\pro\ol{b}_jB_jB_i\der\ol{b}_jxB_i\der\ol{b}_jxy=w$ as desired.
\end{proof}

\begin{mylemma}\label{lemma2trivial}
For $w\in\cB^*$ we have that $S\der w$ if and only if $w\equiv1$ in $F_m$.
\end{mylemma}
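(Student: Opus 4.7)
The plan is to prove both implications by induction, paralleling the proof structure of Lemma \ref{lemma1trivial} and using that lemma as a black box to handle the derivations from $A_i$ and $\ol{A}_i$.

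For the forward direction ``$S \der w \Rightarrow w \equiv 1$'', I would induct on the length of the derivation. The base case uses the production $S \pro \epsilon$ which gives $w = \epsilon \equiv 1$. For the inductive step, suppose the first production applied is $S \pro a_j\ol{A}_jS$ (the case $S \pro \ol{a}_jA_jS$ is analogous). Then $w$ decomposes as $w = a_j y z$ with $\ol{A}_j \der y$ and $S \der z$, where both sub-derivations are shorter. By Lemma \ref{lemma1trivial} we have $y \equiv \ol{a}_j$, and by the inductive hypothesis $z \equiv 1$, whence $w \equiv a_j\ol{a}_j = 1$.

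For the backward direction ``$w \equiv 1 \Rightarrow S \der w$'', I would induct on the length of $w$. The base case $w = \epsilon$ is handled by the production $S \pro \epsilon$. For the inductive step, let $w$ be non-empty with $w \equiv 1$; suppose the first letter of $w$ is $a_j$ (the case $\ol{a}_j$ is symmetric). Then $w = a_j w_1$ with $w_1 \equiv \ol{a}_j$, so the set of non-empty initial segments of $w_1$ that represent $\ol{a}_j$ in $F_n$ is non-empty (it contains $w_1$ itself). Let $y$ be such an initial segment of minimal length, and write $w_1 = yz$. Then $y$ satisfies both conditions of Lemma \ref{lemma1trivial} for $\ol{A}_j$, so $\ol{A}_j \der y$. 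Moreover $z \equiv 1$ and $z$ is strictly shorter than $w$ (since $y$ is non-empty), so by the inductive hypothesis $S \der z$. Applying the production $S \pro a_j\ol{A}_jS$ and then the sub-derivations gives $S \der a_j y z = w$, as required.

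The main point requiring care is the existence of the factorization $w = a_j y z$ in the backward direction, together with the argument that the shortest prefix $y$ of $w_1$ representing $\ol{a}_j$ is a legitimate object to which Lemma \ref{lemma1trivial} applies; the minimality condition is exactly condition (ii) of that lemma. Everything else is essentially a bookkeeping exercise combining the first-production decomposition with the already-established characterization of the derivations from $A_i$ and $\ol{A}_i$.
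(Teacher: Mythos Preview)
Your proposal is correct and follows essentially the same approach as the paper's proof: both directions are handled by the same inductions (on the length of the derivation for the forward direction, on the length of $w$ for the backward direction), using Lemma~\ref{lemma1trivial} to control the $A_j$/$\ol{A}_j$ sub-derivations and taking the shortest prefix representing the appropriate generator to set up the factorization. The only cosmetic difference is that you treat the case where the first letter is $a_j$ while the paper treats the symmetric case $\ol{a}_j$.
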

\begin{proof}
We prove that if $S\der w$ then $w\equiv1$, by induction on the length of the derivation. The base step, when the derivation has length one, is trivial. Suppose $S\der w$ and suppose the first production applied is $S\pro \ol{b}_iB_iS$ (the other case is analogous): then we can write $w=\ol{b}_ixy$ with $B_i\der x$ and $S\der y$. By Lemma \ref{lemma1trivial} we have $x\equiv b_i$ and by inductive hypothesis we have $y\equiv1$, yielding $w\equiv 1$ as desired.

We prove that if $w\equiv 1$ then $S\der w$, by induction on the length of the word. The base step, when the word has length zero, is trivial. Suppose $w\equiv1$ and without loss of generality assume that $w$ begins with $\ol{b}_j$. Decompose $w=\ol{b}_jxy$ where $x$ is the shortest possible such that $x\equiv b_j$. By Lemma \ref{lemma1trivial} we have $B_j\der x$ and by inductive hypothesis we have $S\der y$, yielding $S\pro\ol{b}_jB_jS\der\ol{b}_jxS\der\ol{b}_jxy=w$ as desired.
\end{proof}

\begin{mylemma}\label{lemma3trivial}
The above grammar is unambiguous.
\end{mylemma}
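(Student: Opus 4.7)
The plan is to prove, by strong induction on the length $l(w)$, the following simultaneous statement: for every non-terminal $N \in \{S\} \cup \{A_i, \ol{A}_i : 1 \le i \le n\}$ and every word $w$ with $N \der w$, there is a unique leftmost derivation from $N$ to $w$. In a leftmost derivation, the only freedom at each step is (a) which production is applied to the leftmost non-terminal, and (b) how the target word is split among the non-terminals occurring on the right-hand side of that production. I will pin down both (a) and (b) from $w$ itself, after which the inductive hypothesis supplies unique sub-derivations.

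For (a), inspection of the productions for $A_i$ (and symmetrically for $\ol{A}_i$) shows that their four families of right-hand sides begin with distinct letters, namely $a_i$, $\ol{a}_i$, $a_j$ with $j \ne i$, and $\ol{a}_j$ with $j \ne i$. Hence the production to apply is forced by the first letter of $w$. The only subtlety is that $A_i \pro a_i$ has a right-hand side of length one and produces only the single word $a_i$; but if $A_i \der w$ and $w$ begins with $a_i$, then $a_i$ is an initial segment of $w$ equal to $a_i$ in $F_n$, so the minimality clause of Lemma \ref{lemma1trivial} forces $w = a_i$, meaning this rule is indeed applicable and is the only option. The case $N = S$ is analogous: either $w = \epsilon$ and the only applicable rule is $S \pro \epsilon$, or the first letter of $w$ selects between $S \pro a_j \ol{A}_j S$ and $S \pro \ol{a}_j A_j S$.

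For (b), consider as a representative case the rule $A_i \pro \ol{a}_j A_j A_i$ applied to $w = \ol{a}_j u$; I must split $u = xy$ with $A_j \der x$ and $A_i \der y$. By Lemma \ref{lemma1trivial}, such an $x$ is a prefix of $u$ with $x \equiv a_j$ in $F_n$ and no proper initial segment of $x$ equal to $a_j$. If two candidate prefixes $x, x'$ satisfied these conditions with $l(x) < l(x')$, then $x$ would be a proper non-empty initial segment of $x'$ with $x \equiv a_j$, contradicting the minimality required of $x'$. So $x$ is uniquely determined, hence so is $y$, and the inductive hypothesis supplies unique sub-derivations $A_j \der x$ and $A_i \der y$. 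Every other production has at most two non-terminals on its right-hand side and is handled identically. The substantive content has been packaged into the minimality clause of Lemma \ref{lemma1trivial}, crafted precisely so that unique decomposition comes for free; the only remaining labor is the case analysis, which poses no real obstacle.
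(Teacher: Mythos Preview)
Your proof is correct, but it takes a different route from the paper's. You argue by strong induction on $l(w)$: the first letter of $w$ determines the first production, and then Lemma \ref{lemma1trivial}'s minimality clause pins down the split point between the two non-terminals on the right-hand side, after which induction finishes the job. The paper instead argues step-by-step along a given leftmost derivation $S=v_0\pro v_1\pro\cdots\pro v_r=w$: writing $v_k=w'Np$ with $w'\in\cA^*$, it observes that every production for $A_i$ (and $\ol{A}_i$) begins with a distinct terminal letter, so the single letter following $w'$ in $w$ forces the production; for $S$ one also notes that $S$ occurs exactly once, at the end, so the choice between $S\pro\epsilon$ and a letter-initial rule is forced by whether $w'=w$. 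The paper never invokes Lemma \ref{lemma1trivial} and never needs to locate a split point---the argument is purely syntactic and essentially shows the grammar is determined by one-letter lookahead. Your approach does more work (it relies on the semantic characterization) but has the virtue of making explicit \emph{why} the minimality clause (ii) in Lemma \ref{lemma1trivial} was built into the grammar: precisely so that the decomposition $u=xy$ is unique. One minor phrasing issue: your sentence ``the only freedom at each step is (a) \ldots and (b) how the target word is split'' slightly conflates a single derivation step with the block of steps resolving one non-terminal; the split is not a choice made at one step but emerges from later steps. Your actual argument (first production plus split determines the sub-derivations) is the standard parse-tree decomposition of a leftmost derivation and is fine.
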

\begin{proof}
Suppose we are given a leftmost derivation $S=v_0\pro v_1\pro...\pro v_{r-1}\pro v_r=w$ for a certain word $w\in\cB^*$. We fix $k\in\{0,...,r-1\}$ and we want to prove that the production rule to be applied on $v_k$ is forced. Write $v_k=w'Np$ with $w'\in\cB^*$ and $N\in\cN$ and $p\in(\cB\cup\cN)^*$, and notice that $w'$ must be an initial segment of $w$.

Suppose $N=B_i$ for some $i\in\{1,...,m\}$ (the case $N=\ol{B}_i$ is analogous). Each production rule for $B_i$ begins with a terminal symbol, so we must have $w'\not=w$: we call $c\in\cB$ the next letter of $w$, so that $w'c$ is an initial segment of $w$. But for each $c\in\cB^*$ there is exactly one production rule for $B_i$ that begins with $c$, and thus we are forced to apply that production rule.

It is easy to show by induction that each of $v_0,...,v_{r-1}$ contains exactly one occurrence of $S$, and exactly at the end. Suppose $N=S$; if $w'=w$ then we are forced to apply $(S,\epsilon)$; otherwise we have that $w'c$ is an initial segment of $w$ for exactly one $c\in\cB$, and we are forced to apply the unique production rule for $S$ that begins with $c$. This proves that the grammar is unambiguous.
\end{proof}

The proof of Proposition \ref{languagetrivial} is thus complete.

\subsection{A grammar for the language of a kernel}\label{sec:languagekernel}

The following Proposition \ref{languagekernel} is a generalization of Proposition \ref{languagetrivial}, obtained by looking at the proof of Proposition \ref{cfreepreimage}, but with particular care to the size and to the unambiguity of the grammars involved.

Let $F_n$ be a free group generated by $a_1,...,a_n$ and let $\cA=\{a_1,...,a_n,\ol{a}_1,...,\ol{a}_n\}$. Let $F_m$ be a free group generated by $b_1,...,b_m$ and let $\cB=\{b_1,...,b_m,\ol{b}_1,...,\ol{b}_m\}$.

Let $\phi:F_m\rar F_n$ be a homomorphism. For $k=1,...,m$, let $\beta_k,\ol{\beta}_k\in\cA^*$ be the reduced words representing $\phi(b_k),\phi(\ol{b}_k)$ respectively, so that $\ol{\beta}_k$ is the formal inverse of $\beta_k$. Denote with $\norma{\phi}=\max\{l(\beta_1),...,l(\beta_m)\}$.

\begin{myprop}\label{languagekernel}
Let $\cL(\ker(\phi))\subseteq\cB^*$ be the language of all words representing an element of $\ker(\phi)\subseteq F_m$. Then the language $\cL(\ker(\phi))$ is context-free and has a grammar $(\cN,\cP,S)$ such that:

(i) $\norma{\cP}$ is $O(nm^3\norma{\phi}^3)$ and $\ram{\cP}=2$.

(ii) $(\cN,\cP,S)$ is unambiguous.
\end{myprop}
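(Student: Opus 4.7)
The plan is to generalize the grammar of Proposition \ref{languagetrivial} from words representing $1\in F_n$ to words representing elements of $\ker(\phi)$. Observing that $\cL(\ker(\phi))=\tilde\phi^{-1}(\cL(1))$, where $\tilde\phi:\cB^*\rar\cA^*$ is the extension of $\phi$ to concatenations, the construction should lift the grammar of Proposition \ref{languagetrivial} through $\tilde\phi$, while preserving both unambiguity and a sharp size bound. The intuition is that of a deterministic pushdown automaton that processes $w\in\cB^*$ one letter at a time, pushing the block $\beta_k^\epsilon$ onto a stack (with cancellations against the current top) whenever a letter $b_k^\epsilon$ is read, and accepting when the stack becomes empty; its determinism is exactly what will force unambiguity of the grammar.

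For the non-terminals I would use an initial symbol $S$ together with symbols $A_{p,q}$ parameterized by ordered pairs of \emph{block markers} $p=(k,i,\epsilon)$ with $1\le k\le m$, $\epsilon\in\{+1,-1\}$, and $0\le i\le l(\beta_k)$. The intended meaning, generalizing Lemma \ref{lemma1trivial}, is that $A_{p,q}\der w$ exactly when the concatenation of the suffix of $\beta_k^\epsilon$ starting at position $i+1$, then $\tilde\phi(w)$, then the prefix of $\beta_{k'}^{\epsilon'}$ up to position $i'$, reduces to $1$ in $F_n$, subject to a minimality condition on prefixes of $w$ analogous to the one already used for $\cL(1)$. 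The production rules come in three shapes mirroring Proposition \ref{languagetrivial}: a base case $A_{p,q}\pro\epsilon$ when the two partial blocks already cancel against each other letter-by-letter in $F_n$; a ``read and split'' rule $A_{p,q}\pro b_{k''}^{\epsilon''}A_{p,r}A_{r,q}$, in which reading a fresh $\cB$-letter inserts a new block and $r$ is an intermediate marker chosen so that $A_{p,r}$ handles the cancellation of the new block against the partial block at $p$; and initial productions $S\pro\epsilon\mid b_{k''}^{\epsilon''}A_{p,q}$. Counting: there are $O(m\norma\phi)$ markers and thus $O(m^2\norma\phi^2)$ non-terminals, each with $O(m\norma\phi)$ productions of bounded length and a further factor of $n$ coming from the letters of $\cA$ involved at block boundaries; this yields $\norma{\cP}=O(nm^3\norma\phi^3)$, and $\ram{\cP}=2$ is immediate from the shape of the right-hand sides.

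For correctness I would prove the analogues of Lemmas \ref{lemma1trivial} and \ref{lemma2trivial} by induction on derivation length in one direction and on word length in the other, exactly as in Proposition \ref{languagetrivial}. For unambiguity I would adapt Lemma \ref{lemma3trivial}: in any leftmost derivation, the next $\cB$-letter of the target word together with the current marker pair $(p,q)$ uniquely determine which of the three production shapes applies and, via the minimality condition, which intermediate marker $r$ is forced in the ``read and split'' rule. The hard part will be formulating the minimality condition so that simultaneously (i) every $w\in\cL(\ker(\phi))$ admits a unique leftmost derivation and (ii) no spurious words outside $\cL(\ker(\phi))$ are produced. Cancellations in $F_n$ can occur both across block boundaries and inside a single block, so the condition has to track sub-derivations that partially traverse a block on either side; once this bookkeeping is pinned down, the proof pattern of Proposition \ref{languagetrivial} transfers directly and the size bound follows from the counting above.
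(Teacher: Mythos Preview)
Your high-level plan—lifting the unambiguous grammar of Proposition \ref{languagetrivial} through $\tilde\phi$ by decorating the non-terminals with positions inside the blocks $\beta_k^{\pm1}$, guided by a deterministic-PDA picture—is exactly the paper's approach, and the size accounting is on target. The gap is in the one concrete production rule you write. With your invariant ``suffix$(p)\cdot\tilde\phi(w)\cdot\text{prefix}(q)\equiv 1$'', reading $b_{k''}^{\epsilon''}$ inserts the block $\beta_{k''}^{\epsilon''}$ \emph{between} suffix$(p)$ and whatever the first right-hand non-terminal will later produce; but in your rule $A_{p,q}\pro b_{k''}^{\epsilon''}A_{p,r}A_{r,q}$ the first non-terminal $A_{p,r}$ still carries $p$ on the left and nowhere records the newly inserted block, so by your own invariant it asserts suffix$(p)\cdot\tilde\phi(v)\cdot\text{prefix}(r)\equiv 1$ with $\beta_{k''}$ vanished. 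No choice of $r$ repairs this, and trying to fold the new block into the left marker fails whenever suffix$(p)$ is longer than $\beta_{k''}$.

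The paper resolves this by two adjustments. First, the non-terminals $A_i^{\lambda,\mu}$ carry an extra index $i\in\{1,\dots,n\}$, the target $\cA$-letter: the invariant is $\lambda\cdot\psi(u)=\alpha\cdot\mu$ with $\alpha\equiv a_i$, subject to minimality both on initial segments of $\alpha$ and on initial segments of $u$ (Lemma \ref{lemma1ker}). Second—and this is what makes the rule shapes type-check—the grammar \emph{separates} reading from cancelling: a ``read'' rule $A_i^{\epsilon,\mu}\pro b_k\,A_i^{\beta_k,\mu}$ fires only when the left suffix is empty and simply loads the fresh block into the left index, while a ``cancel'' rule $A_i^{\ol a_j\lambda,\mu}\pro A_j^{\lambda,\nu}A_i^{\nu,\mu}$ peels one $\cA$-letter off the left suffix and reads no $\cB$-letter at all. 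With this decoupling the induction (Lemmas \ref{lemma1ker}--\ref{lemma2ker}) and the unambiguity argument (Lemma \ref{lemma3ker}) become verbatim lifts of Lemmas \ref{lemma1trivial}--\ref{lemma3trivial}, and the two minimality conditions you correctly flag as the hard part become straightforward to state.
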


\begin{myrmk}
We can fix $F_m$ and we can take any injective homomorphism $\phi$ from $F_m$ to any finitely generated free group $F_n$. This in particular allows to produce a non-ambiguous grammar $(\cN,\cP,S)$ for the language $\cL(1)\subseteq\cB^*$ with $\ram{\cP}=2$. Thus the above Proposition \ref{languagekernel} gives Proposition \ref{languagetrivial} as a corollary, except for the bound on $\norma{\cP}$ as $O(m^2)$; in that regard, the construction of the proof of Proposition \ref{languagetrivial} is more efficient in this particular case.
\end{myrmk}

Let $\Lambda=\{\lambda\in\cA^* : \lambda$ is a final segment of some of $\beta_1,...,\beta_m,\ol{\beta}_1,...,\ol{\beta}_m\}$, including the empty final segment $\epsilon$ and the full final segments $\beta_1,...,\beta_m,\ol{\beta}_1,...,\ol{\beta}_m$. The grammar $(\cN,\cP,S)$ has non-terminal symbols $S^\lambda,A_i^{\lambda,\mu},\ol{A}_i^{\lambda,\mu}$ for $i=1,...,n$ and $\lambda,\mu\in\Lambda$. The initial symbol is $S=S^\epsilon$ and the production rules are as follows:

\vspace{0.2cm}

\begin{tabular}{l l l p{0.5cm} l}
$S^\epsilon$ & $\pro$ & $\epsilon \quad | \quad b_kS^{\beta_k} \quad | \quad \ol{b}_kS^{\ol{\beta}_k}$ && for $k=1,...,m$. \\
$S^{a_i\lambda}$ & $\pro$ & $\ol{A}_i^{\lambda,\nu}S^\nu$ && for $\nu\in\Lambda$. \\ 
$S^{\ol{a}_i\lambda}$ & $\pro$ & $A_i^{\lambda,\nu}S^\nu$ && for $\nu\in\Lambda$. \\ 
\end{tabular}

\vspace{0.3cm}

\begin{tabular}{l l l p{0.5cm} l}
$A_i^{\epsilon,\mu}$ & $\pro$ & $b_kA_i^{\beta_k,\mu} \quad | \quad \ol{b}_kA_i^{\ol{\beta}_k,\mu}$ && for $k=1,...,m$. \\ 
$A_i^{a_i\lambda,\lambda}$ & $\pro$ & $\epsilon$ && \\ 
$A_i^{a_j\lambda,\mu}$ & $\pro$ & $\ol{A}_j^{\lambda,\nu}A_i^{\nu,\mu}$ && for $j=1,...,n$ with $j\not=i$ and $\nu\in\Lambda$. \\ 
$A_i^{\ol{a}_j\lambda,\mu}$ & $\pro$ & $A_j^{\lambda,\nu}A_i^{\nu,\mu}$ && for $j=1,...,n$ and $\nu\in\Lambda$. \\ 
\end{tabular}

\vspace{0.3cm}

\begin{tabular}{l l l p{0.5cm} l}
$\ol{A}_i^{\epsilon,\mu}$ & $\pro$ & $b_k\ol{A}_i^{\beta_k,\mu} \quad | \quad \ol{b}_k\ol{A}_i^{\ol{\beta}_k,\mu}$ && for $k=1,...,m$. \\ 
$\ol{A}_i^{\ol{a}_i\lambda,\lambda}$ & $\pro$ & $\epsilon$ && \\ 
$\ol{A}_i^{\ol{a}_j\lambda,\mu}$ & $\pro$ & $A_j^{\lambda,\nu}\ol{A}_i^{\nu,\mu}$ && for $j=1,...,n$ with $j\not=i$ and $\nu\in\Lambda$. \\ 
$\ol{A}_i^{a_j\lambda,\mu}$ & $\pro$ & $\ol{A}_j^{\lambda,\nu}\ol{A}_i^{\nu,\mu}$ && for $j=1,...,n$ and $\nu\in\Lambda$. \\ 
\end{tabular}

\vspace{0.2cm}

Notice that there are no production rules for the symbols $A_i^{a_i\lambda,\mu}$ and $\ol{A}_i^{\ol{a}_i\lambda,\mu}$ if $\mu\not=\lambda$. The rest of this section is dedicated to the proof that the above grammar has the desired properties.

Define $\psi:\cB^*\rar\cA^*$ as the unique monoid morphism such that $\psi(b_k)=\beta_k$ and $\psi(\ol{b}_k)=\ol{\beta}_k$ for $k=1,...,m$.

\begin{mylemma}\label{lemma1ker}
For $u\in\cA^*$ and $i=1,...,n$ we have that $A_i^{\lambda,\mu}\der u$ (resp. $\ol{A}_i^{\lambda,\mu}\der u$) if and only if the following three conditions hold:

(i) $\lambda\cdot\psi(u)=\alpha\cdot\mu$ for some $\alpha\in\cA^*$ with $\alpha\equiv a_i$ in $F_n$ (resp. $\alpha\equiv\ol{a}_i$).

(ii) No proper initial segment $\alpha'$ of $\alpha$ satisfies $\alpha'\equiv a_i$ in $F_n$ (resp. $\alpha'\equiv\ol{a}_i$).

(iii) The word $\alpha$ is not an initial segment of $\lambda\cdot\psi(u')$ for any proper initial segment $u'$ of $u$.
\end{mylemma}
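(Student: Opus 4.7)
The proof mirrors that of Lemma \ref{lemma1trivial}, now tracking the additional bookkeeping of the left-buffer $\lambda$, the right-remainder $\mu$, and the homomorphism $\psi$. Since the productions for $A_i^{\lambda,\mu}$ call $\ol{A}_j^{\lambda,\nu}$ and vice versa, the two families must be handled simultaneously by mutual induction; I present only the $A_i$ case, the $\ol{A}_i$ case being obtained by swapping the roles of $a_i$ and $\ol{a}_i$. For the forward direction, I induct on the length of the derivation. The only length-one derivation is via $A_i^{a_i\lambda',\lambda'}\pro\epsilon$, which gives $u=\epsilon$, $\mu=\lambda'$, and $\alpha=a_i$, and makes (i)--(iii) trivial. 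For the inductive step I case on the first production used. The rules $A_i^{\epsilon,\mu}\pro b_k A_i^{\beta_k,\mu}$ (and the $\ol{b}_k$ variant) transfer the IH directly, since $\lambda=\epsilon$ and $\psi(u)=\beta_k\psi(u')$. The rules $A_i^{a_j\lambda',\mu}\pro\ol{A}_j^{\lambda',\nu}A_i^{\nu,\mu}$ (with $j\neq i$; the $\ol{a}_j$ case is symmetric) split $u=u_1u_2$, and the IH applied twice yields $\lambda'\psi(u_1)=\alpha_1\nu$ with $\alpha_1\equiv\ol{a}_j$ and $\nu\psi(u_2)=\alpha_2\mu$ with $\alpha_2\equiv a_i$; setting $\alpha:=a_j\alpha_1\alpha_2$ gives (i). For (ii), a candidate bad proper prefix of $\alpha$ either lies inside $a_j\alpha_1$ (reduce to the IH for $\alpha_1$ via Lemma \ref{segmentobanale}, exactly as in Lemma \ref{lemma1trivial}) or extends into $\alpha_2$ (reduce to the IH for $\alpha_2$, using that $a_j\alpha_1\equiv 1$ cancels). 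For (iii), the proper prefixes of $u$ split into three natural families along the $u_1$--$u_2$ boundary, and each falls under the appropriate instance of the IH's (iii) or a specialization thereof.

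For the backward direction I induct on $|u|$ with a nested induction on $|\lambda|$, case-splitting on the first letter of $\lambda$ (or of $u$, if $\lambda=\epsilon$). If $\lambda=\epsilon$, then $u\neq\epsilon$ (else $\alpha$ would be empty, contradicting $\alpha\equiv a_i$), so $u=bu'$ with $b\in\cB$, and the rule $A_i^{\epsilon,\mu}\pro b\,A_i^{\psi(b),\mu}$ reduces the problem to $A_i^{\psi(b),\mu}\der u'$ via the outer IH on $|u'|<|u|$. If $\lambda=a_i\lambda''$, condition (ii) forces $\alpha=a_i$ (any longer $\alpha$ starting with $a_i$ has $a_i$ itself as a proper prefix satisfying $a_i\equiv a_i$), and condition (iii) applied to the prefix $u'=\epsilon$ then forces $u=\epsilon$; thus $\mu=\lambda''$ and $A_i^{a_i\mu,\mu}\pro\epsilon$ concludes the case. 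The main case is $\lambda=a_j\lambda'$ with $j\neq i$: I aim to apply $A_i^{a_j\lambda',\mu}\pro\ol{A}_j^{\lambda',\nu}A_i^{\nu,\mu}$, and so must construct the splitting $u=u_1u_2$ and the intermediate state $\nu\in\Lambda$. Apply Lemma \ref{segmentobanale} to $\alpha$ (whose first letter $a_j$ differs from the first letter of the reduced form $a_i$) to obtain the shortest proper initial segment $w^{\ast}$ of $\alpha$ with $w^{\ast}\equiv 1$; writing $w^{\ast}=a_j\alpha_1$ gives $\alpha_1\equiv\ol{a}_j$, and minimality of $w^{\ast}$ forces $\alpha_1$ to be the shortest initial segment of $a_j^{-1}\alpha$ representing $\ol{a}_j$. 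Setting $\alpha_2$ to be the remainder yields $\alpha=a_j\alpha_1\alpha_2$ with $\alpha_2\equiv a_i$. Define $u_1$ to be the shortest prefix of $u$ with $|\lambda'\psi(u_1)|\ge|\alpha_1|$, and let $\nu$ be the tail of $\lambda'\psi(u_1)$ past $\alpha_1$. A brief case-split on whether $u_1=\epsilon$ or $u_1=u_0 b'$ (with $b'\in\cB$) shows that $\nu$ is a suffix of either $\lambda'\in\Lambda$ or of some $\psi(b')\in\{\beta_k,\ol{\beta}_k\}$, hence $\nu\in\Lambda$. The conditions (i)--(iii) for both sub-problems $\ol{A}_j^{\lambda',\nu}\der u_1$ and $A_i^{\nu,\mu}\der u_2$ follow by translating the original conditions together with the minimality choices of $\alpha_1$ and $u_1$; the induction then closes since either $u_1,u_2\subsetneq u$ (outer IH on $|u|$), or else one of them equals $\epsilon$ and the corresponding new $\lambda$-value ($\lambda'$ or $\nu$) is strictly shorter than $|\lambda|$ (inner IH).

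The main obstacle lies in the backward direction's main case, specifically in verifying that the splitting $u=u_1u_2$ and intermediate state $\nu$ exist with the required properties and that all three conditions translate faithfully to both sub-problems. Condition (iii) is the most delicate, since proper prefixes of $u$ must be systematically matched to proper prefixes of $u_1$, to $u_1$ itself, and to $u_1$ extended by proper prefixes of $u_2$, each invoking a specific case of the IH's (iii). Once the conceptual decomposition via Lemma \ref{segmentobanale} is in place, however, every remaining verification is mechanical; the grammar has been engineered so that $\Lambda$, as the set of final segments of the $\beta_k$'s, precisely accommodates the possible values of $\nu$, and no further difficulty arises.
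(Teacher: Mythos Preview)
Your proof is correct and follows essentially the same approach as the paper's: induction on the length of the derivation for the forward direction, and on a measure of $(u,\lambda)$ for the backward direction, with the same case-split on the first letter of $\lambda$ and the same use of Lemma \ref{segmentobanale} to decompose $\alpha$. Your nested lexicographic induction on $(|u|,|\lambda|)$ is a harmless variant of the paper's induction on $l(u)+l(\lambda)$; if anything it is slightly cleaner, since for the second sub-problem $A_i^{\nu,\mu}\der u_2$ one only needs $|u_2|<|u|$ (which holds once $u_1\neq\epsilon$), whereas with the sum $l(u_2)+l(\nu)$ one must also check that $\nu$ does not grow too much.
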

\begin{proof}
We prove that if $A_i^{\lambda,\mu}\der u$ then (i), (ii) and (iii) hold, by induction on the length of the derivation. The base step, when the derivation has length one, is trivial.

Suppose $A_i^{\epsilon,\mu}\der u$ and suppose without loss of generality the first production rule used is $A_i^{\epsilon,\mu}\pro b_kA_i^{\beta_k,\mu}$: then we can write $u=b_ku_1$ with $A_i^{\beta_k,\mu}\der u_1$. By inductive hypothesis we have that $\beta_k\cdot\psi(u_1)=\alpha\cdot\mu$ where $\alpha\equiv a_i$ satisfies (ii) and (iii). But then we have $\epsilon\cdot\psi(u)=\alpha\cdot\mu$ where $\alpha\equiv a_i$ satisfies (ii) and (iii), as desired.

Suppose $A_i^{\ol{a}_j\lambda,\mu}\der u$ for some $j\in\{1,...,n\}$: then the first production used is of the form $A_i^{\ol{a}_j\lambda,\mu}\pro A_j^{\lambda,\nu}A_i^{\nu,\mu}$ and we can write $u=u_1u_2$ with $A_j^{\lambda,\nu}\der u_1$ and $A_i^{\nu,\mu}\der u_2$. By inductive hypothesis we can write $\lambda\cdot\psi(u_1)=\alpha_1\cdot\nu$ where $\alpha_1\equiv a_j$ satisfies (ii) and (iii); similarly we can write $\nu\cdot\psi(u_2)=\alpha_2\cdot\mu$ where $\alpha_2\equiv a_i$ satisfies (ii) and (iii). But then we have that $\ol{a}_j\lambda\cdot\psi(u)=\ol{a}_j\alpha_1\alpha_2\cdot\mu$ where $\ol{a}_j\alpha_1\alpha_2\equiv a_i$; property (ii) for $\alpha_1$ and for $\alpha_2$, together with Lemma \ref{segmentobanale}, implies that $\ol{a}_j\alpha_1\alpha_2$ satisfies (ii); property (iii) for $\alpha_1$ and for $\alpha_2$ easily implies property (iii) for $\ol{a}_j\alpha_1\alpha_2$, as desired.

The case $A_i^{a_j\lambda,\mu}\der u$ for some $j\in\{1,...,n\}$ with $j\not=i$ is completely analogous. The case $A_i^{a_i\lambda,\mu}\der u$ can only happen if $\lambda=\mu$ and only in the base step of the induction.

We prove that if $u$ satisfies conditions (i), (ii) and (iii) for some $i\in\{1,...,n\}$ and $\lambda,\mu\in\Lambda$, then $A_i^{\lambda,\mu}\der u$; we proceed by induction on $l(u)+l(\lambda)$. The base step, where $u=\epsilon$ and $\lambda=\epsilon$, is true since (i) can't hold.

Suppose $\lambda=\epsilon$ and $\epsilon\cdot\psi(u)=\alpha\cdot\mu$ where $\alpha\equiv a_i$ satisfies (ii) and (iii). Without loss of generality assume that $u=b_ku_1$ for some $k\in\{1,...,m\}$. We have that $\beta_k\cdot\psi(u_1)=\alpha\cdot\mu$ where $\alpha\equiv a_i$ satisfies (ii) and (iii): by inductive hypothesis we have $A_i^{\beta_k,\mu}\der u_1$. It follows that $A_i^{\epsilon,\mu}\der u$, as desired.

Suppose $\lambda=\ol{a}_j\lambda_1$ for some $j\in\{1,...,n\}$ and suppose $\lambda\cdot\psi(u)=\alpha\cdot\mu$ where $\alpha\equiv a_i$ satisfies (ii) and (iii). Using Lemma \ref{segmentobanale} we can write $\alpha=\ol{a}_j\alpha_1\alpha_2$ where $\alpha_1\equiv a_j$, and without loss of generality we can assume that $\alpha_1$ is the shortest with this property. Write also $u=u_1u_2$ where $u_1$ is the shortest such that $\lambda\cdot\psi(u_1)$ contains $\ol{a}_j\alpha_1$ as an initial segment, and write $\lambda\cdot\psi(u_1)=\ol{a}_j\alpha_1\cdot\nu$ for some $\nu\in\Lambda$. Now we have that $\lambda_1\cdot\psi(u_1)=\alpha_1\cdot\nu$ where $\alpha_1\equiv a_j$ satisfies (ii) and (iii); we also have that $\nu\cdot\psi(u_2)=\alpha_2\cdot\mu$ where $\alpha_2\equiv a_i$ satisfies (ii) and (iii). By inductive hypothesis we have that $A_j^{\lambda_1,\nu}\der u_1$ and $A_i^{\nu,\mu}\der u_2$ and thus $A_i^{\lambda,\mu}\pro A_j^{\lambda_1,\nu}A_i^{\nu,\mu}\der u$, as desired.

The case $\lambda=\ol{a}_j\lambda_1$ for some $j\in\{1,...,n\}$ with $j\not=i$ is analogous.

Suppose $\lambda=a_i\lambda_1$ and suppose $\lambda\cdot\psi(u)=\alpha\cdot\mu$ where $\alpha\equiv a_i$ satisfies (ii) and (iii). Since the first letter of $\lambda$ is $a_i$, the same must hold for $\alpha$ too, and thus property (ii) implies $\alpha=a_i$. Now property (iii) implies $u=\epsilon$, and it follows that $\lambda_1=\mu$. The production rule $A_i^{a_i\lambda_1,\lambda_1}\pro\epsilon$ gives us the conclusion.
\end{proof}

\begin{mylemma}\label{lemma2ker}
For $u\in\cB^*$ we have that $S^\lambda\der u$ if and only if $\lambda\cdot\psi(u)\equiv1$ in $F_n$.
\end{mylemma}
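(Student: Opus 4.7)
The plan is to prove both implications by induction, using Lemma \ref{lemma1ker} as the bridge whenever a production at $S^{a_i\lambda'}$ or $S^{\ol{a}_i\lambda'}$ is applied.

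For the forward direction, I would induct on the length of the derivation $S^\lambda\der u$ and split on the first production rule. If that rule is $S^\epsilon\pro\epsilon$, then $u=\epsilon$ and $\lambda=\epsilon$, so $\lambda\cdot\psi(u)\equiv 1$ trivially. If it is $S^\epsilon\pro b_kS^{\beta_k}$ (analogously $\ol{b}_kS^{\ol{\beta}_k}$), then $u=b_ku_1$ with $S^{\beta_k}\der u_1$, and the inductive hypothesis gives $\beta_k\psi(u_1)\equiv 1$, so $\psi(u)\equiv 1$. If the rule is $S^{a_i\lambda'}\pro\ol{A}_i^{\lambda',\nu}S^\nu$, then $u=u_1u_2$ with $\ol{A}_i^{\lambda',\nu}\der u_1$ and $S^\nu\der u_2$; applying Lemma \ref{lemma1ker} to the first piece and the inductive hypothesis to the second yields $\lambda'\psi(u_1)=\alpha\nu$ with $\alpha\equiv\ol{a}_i$ and $\nu\psi(u_2)\equiv 1$, so $a_i\lambda'\psi(u)\equiv a_i\ol{a}_i\cdot 1=1$ in $F_n$. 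The case $S^{\ol{a}_i\lambda'}$ is symmetric.

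For the backward direction, I would use strong induction on the pair $(l(u),l(\lambda))$ ordered lexicographically. The base case $u=\epsilon$ forces $\lambda\equiv 1$; since $\lambda\in\Lambda$ is a suffix of a reduced word, $\lambda$ itself is reduced, hence $\lambda=\epsilon$, and the rule $S^\epsilon\pro\epsilon$ applies. If $\lambda=\epsilon$ and $u=b_ku_1$ (similarly for $\ol{b}_k$), then $\beta_k\psi(u_1)\equiv 1$ and the inductive hypothesis applied to $(u_1,\beta_k)$, valid since $l(u_1)<l(u)$, gives $S^{\beta_k}\der u_1$, whence $S^\epsilon\pro b_kS^{\beta_k}\der u$. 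If $\lambda=a_i\lambda'$ (symmetric for $\ol{a}_i\lambda'$), then $\lambda'\psi(u)\equiv\ol{a}_i$; I would let $\alpha$ be the shortest initial segment of $\lambda'\psi(u)$ with $\alpha\equiv\ol{a}_i$, and $u_1$ the shortest prefix of $u$ such that $\alpha$ is an initial segment of $\lambda'\psi(u_1)$, writing $\lambda'\psi(u_1)=\alpha\nu$ and $u=u_1u_2$. The minimality of $\alpha$ and $u_1$ are precisely conditions (ii) and (iii) of Lemma \ref{lemma1ker}, so $\ol{A}_i^{\lambda',\nu}\der u_1$; and from $a_i\lambda'\psi(u)\equiv 1$ together with $\alpha\equiv\ol{a}_i$ one obtains $\nu\psi(u_2)\equiv 1$.

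The main obstacle is the combinatorial bookkeeping in this last case: checking $\nu\in\Lambda$ and ensuring that $(u_2,\nu)$ is strictly smaller than $(u,\lambda)$ lexicographically. For the former, if $u_1=\epsilon$ then $\nu$ is a suffix of $\lambda'$, hence of some $\beta_k$ or $\ol{\beta}_k$; otherwise $u_1$ ends with a letter $c\in\cB$, and minimality of $u_1$ forces $\nu$ to be a suffix of $\psi(c)\in\{\beta_k,\ol{\beta}_k\}$, so $\nu\in\Lambda$ either way. For the decrease, if $u_1\neq\epsilon$ then $l(u_2)<l(u)$; if $u_1=\epsilon$ then $\alpha$ is a prefix of the reduced word $\lambda'$ with $\alpha\equiv\ol{a}_i$, forcing $\alpha=\ol{a}_i$ and making $\nu$ a proper suffix of $\lambda$, so $l(u_2)=l(u)$ while $l(\nu)<l(\lambda)$. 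The inductive hypothesis therefore delivers $S^\nu\der u_2$, and the rule $S^{a_i\lambda'}\pro\ol{A}_i^{\lambda',\nu}S^\nu$ completes the derivation $S^\lambda\der u_1u_2=u$.
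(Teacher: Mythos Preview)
Your proof is correct and follows essentially the same approach as the paper: forward by induction on the length of the derivation, backward by induction with a case split on the first letter of $\lambda$, invoking Lemma~\ref{lemma1ker} for the $\ol{A}_i$/$A_i$ pieces. Your use of the lexicographic order on $(l(u),l(\lambda))$ in place of the paper's $l(u)+l(\lambda)$, together with your explicit checks that $\nu\in\Lambda$ and that the inductive quantity strictly decreases, is a slight refinement that makes the well-foundedness of the induction cleaner than in the paper's version.
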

\begin{proof}
We prove that if $S^\lambda\der u$ then $\lambda\cdot\psi(u)\equiv1$, by induction on the length of the derivation. The base step, when the length of the derivation is one, is trivial. Suppose $S^\epsilon\der u$, and without loss of generality write $u=b_ku_1$ for some $k\in\{1,...,m\}$: then the first production applied must be $S^\epsilon\pro b_kS^{\beta_k}\der b_ku_1$, and by inductive hypothesis on $S^{\beta_k}$ we are done. Suppose $S^{a_i\lambda}\der u$: then the first production applied must be of the form $S^{a_i\lambda}\pro\ol{A}_i^{\lambda,\nu}S^\nu$ and the conclusion follows from Lemma \ref{lemma1ker} and from the inductive hypothesis on $S^\nu$.

We prove that if $\lambda\cdot\psi(u)\equiv1$ then $S^\lambda\der u$, by induction on $l(u)+l(\lambda)$. The base step, when $u=\epsilon$ and $\lambda=\epsilon$, is trivial. Suppose $\lambda=\epsilon$ and $\psi(u)\equiv1$, and without loss of generality assume that $u=b_ku_1$ for some $k\in\{1,...,m\}$: then we have $\beta_k\cdot\psi(u_1)\equiv1$ and thus by inductive hypothesis $S^{\beta_k}\der u_1$, which yields $S^\epsilon\pro b_kS^{\beta_k}\der b_ku_1$, as desired.

Finally, suppose $\lambda=\ol{a}_i\lambda_1$ for some $i\in\{1,...,n\}$ (the case $\lambda=a_i\lambda_1$ being analogous) and suppose $\lambda\cdot\psi(u)\equiv1$. Then we have $\lambda_1\cdot\psi(u)\equiv a_i$ and let $\alpha$ be the shortest initial segment of $\lambda_1\cdot\psi(u)$ which satisfies $\alpha\equiv a_i$. Write $u=u_1u_2$ so that $\lambda_1\cdot\psi(u_1)$ contains $\alpha$ as initial segment, and $u_1$ is the shortest with this property. Then we can write $\lambda_1\cdot\psi(u_1)=\alpha\cdot\nu$ for some $\nu\in\Lambda$ and we have that $\alpha\equiv a_i$ and $\alpha$ satisfies properties (ii) and (iii) of Lemma \ref{lemma1ker}: it follows that $A_i^{\lambda_1,\nu}\der u_1$. Since $\nu\cdot\psi(u_2)\equiv1$, by inductive hypothesis we have $S^\nu\der u_2$ and thus $S^\lambda\pro A_i^{\lambda_1,\nu}S^\nu\der u_1u_2$, as desired.
\end{proof}

\begin{mylemma}\label{lemma3ker}
The above grammar is unambiguous.
\end{mylemma}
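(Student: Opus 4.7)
The plan is to adapt the strategy of Lemma \ref{lemma3trivial}, using the structural description of $A$- and $\ol{A}$-derivations provided by Lemma \ref{lemma1ker}. Given a leftmost derivation $S^\epsilon = v_0 \pro v_1 \pro \cdots \pro v_r = u \in \cB^*$, I will show by induction on $k$ that the production rule applied to the leftmost non-terminal of $v_k$ is uniquely determined by $u$ and by the terminal prefix $w'$ of $v_k$. Two easy structural observations, both proved by inspecting the production rules, will simplify the case analysis: every right-hand side has its terminal symbols only at the left end, so each $v_k$ has the form $w' N_1 \cdots N_s$ with $w' \in \cB^*$ a prefix of $u$ and the $N_i$ non-terminals; and every rule involving an $S$-symbol has at most one such symbol on each side, always placed rightmost, so $v_k$ contains at most one $S$-symbol and, if present, it equals $N_s$. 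In particular, when the leftmost non-terminal $N = N_1$ is itself an $S$-symbol, we have $s = 1$.

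The easy cases are as follows. If $N \in \{S^\epsilon, A_i^{\epsilon,\mu}, \ol{A}_i^{\epsilon,\mu}\}$, the applicable right-hand sides all begin with distinct terminals (or with $\epsilon$, in the $S^\epsilon$ case), so the next letter of $u$ after $w'$ -- or its absence -- singles out the correct rule; for the $A$ and $\ol{A}$ cases one also notes, using Lemma \ref{lemma1ker}, that the derived word is non-empty. If $N = A_i^{a_i\lambda,\mu}$ or $N = \ol{A}_i^{\ol{a}_i\lambda,\mu}$, then since the word $a_i \lambda \cdot \psi(u')$ begins with the letter $a_i$ for every $u' \in \cB^*$, Lemma \ref{lemma1ker} forces $u = \epsilon$ and $\mu = \lambda$; hence the only applicable rule is the $\epsilon$-rule.

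The core of the argument lies in the remaining cases, where $N$ is one of $S^{a_i\lambda}$, $S^{\ol{a}_i\lambda}$, $A_i^{a_j\lambda,\mu}$ with $j \ne i$, $A_i^{\ol{a}_j\lambda,\mu}$, $\ol{A}_i^{a_j\lambda,\mu}$, or $\ol{A}_i^{\ol{a}_j\lambda,\mu}$ with $j \ne i$. In each such case all applicable rules share the shape $N \pro X^{\lambda,\nu} Y$ parameterised by $\nu \in \Lambda$, so the task reduces to showing that $\nu$ is uniquely determined. If two choices $\nu_1 \ne \nu_2$ both produced valid derivations, then $X^{\lambda,\nu_j}$ would derive an initial segment $u^{(j)}$ of the remainder $u - w'$; since both are prefixes of the same word, without loss of generality $u^{(1)}$ is a proper prefix of $u^{(2)}$. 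Writing $\lambda \psi(u^{(j)}) = \alpha_j \nu_j$ as in Lemma \ref{lemma1ker}, condition (ii) of that lemma implies that each $\alpha_j$ is the shortest prefix of $\lambda \psi(u^{(j)})$ equivalent to the relevant generator; combined with the word-inclusion of $\lambda \psi(u^{(1)})$ as a prefix of $\lambda \psi(u^{(2)})$, this forces $\alpha_1 = \alpha_2$. But then $\alpha_2 = \alpha_1$ is a prefix of $\lambda \psi(u^{(1)}) = \alpha_1 \nu_1$, contradicting condition (iii) applied to $u^{(2)}$ with the proper initial segment $u^{(1)}$. The main obstacle I anticipate is keeping track of which of $a_i, \ol{a}_i, a_j, \ol{a}_j$ plays the role of the ``relevant generator'' in each of the six symmetric cases; beyond this bookkeeping, they all reduce to the same combinatorial uniqueness argument on prefixes of reduced words in $F_n$.
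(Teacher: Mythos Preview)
Your proposal is correct and follows essentially the same approach as the paper's proof: both set up the leftmost derivation $v_k = u'Np$, perform the same case analysis on the leftmost non-terminal $N$, and in the non-trivial cases reduce to showing that the parameter $\nu\in\Lambda$ is uniquely determined via conditions (ii) and (iii) of Lemma~\ref{lemma1ker}. The only stylistic difference is that the paper argues the uniqueness of $\nu$ directly---first pinning down $\alpha$ as the shortest prefix of $\lambda\cdot\psi(u'')$ representing the relevant generator, then $t$ as the shortest prefix of $u''$ with $\lambda\cdot\psi(t)\supseteq\alpha$, and finally $\nu$ from $\lambda\cdot\psi(t)=\alpha\nu$---whereas you argue by contradiction, assuming two valid choices $\nu_1\ne\nu_2$ and using (ii) to force $\alpha_1=\alpha_2$ and then (iii) to rule out the shorter $u^{(1)}$; these are two packagings of the same combinatorics. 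One small point worth making explicit in your write-up: the case $u^{(1)}=u^{(2)}$ (where neither is a \emph{proper} prefix of the other) should be noted separately, since then $\alpha_1\nu_1=\alpha_2\nu_2$ together with $\alpha_1=\alpha_2$ immediately gives $\nu_1=\nu_2$.
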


\begin{proof}
Suppose we are given a leftmost derivation $S^\epsilon=v_0\pro v_1\pro...\pro v_{r-1}\pro v_r=u$ for a certain word $u\in\cB^*$ and we want to prove that the production rule to be applied on $v_k$ is forced, for $k=0,...,r-1$. Write $v_k=u'Np$ with $u'\in\cB^*$ and $N\in\cN$ and $p\in(\cB\cup\cN)^*$, and notice that $u'$ must be an initial segment of $u$, let's say $u=u'u''$.

Suppose $N=A_i^{\epsilon,\mu}$ for some $i\in\{1,...,n\}$ and $\mu\in\Lambda$. Each production for $A_i^{\epsilon,\mu}$ begins with a terminal symbol, and thus $u''$ is non-empty and we can call $d\in\cB$ the first letter of $u''$. But there is exactly one production rule for $A_i^{\epsilon,\mu}$ that begins with $d$, and so we are forced to apply that production rule.

Suppose $N=A_i^{c\lambda,\mu}$ for some $i\in\{1,...,n\}$ and $c\in\cA$ and $c\lambda,\mu\in\Lambda$. If $c=a_i$ then there is at most one derivation for $A_i^{a_i\lambda,\mu}$ and thus we have no choice. If $c=\ol{a}_j$ for some $j\in\{1,...,n\}$, then we only have derivations of the form $A_i^{\ol{a}_j\lambda,\mu}\pro A_j^{\lambda,\nu}A_i^{\nu,\mu}$ and we need to prove that $\nu\in\Lambda$ is uniquely determined. Let's say that during the rest of the derivation the symbol $A_j^{\lambda,\nu}$ derives a word $t\in\cB^*$: we must have that $t$ is an initial segment of $u''$ and thus $\lambda\cdot\psi(t)$ is an initial segment of $\lambda\cdot\psi(u'')$. But we have $\lambda\cdot\psi(t)=\alpha\cdot\nu$ where $\alpha\in\cA^*$ represents $a_j\in F_n$ and properties (ii) and (iii) of Lemma \ref{lemma1ker} hold; property (ii) uniquely determines $\alpha$ to be the shortest initial segment of $\lambda\cdot\psi(u'')$ representing $a_j\in F_n$; property (iii) uniquely determines $t$ to the the shortest initial segment of $u''$ such that $\lambda\cdot\psi(t)$ contains $\alpha$. Since $\lambda\cdot\psi(t)=\alpha\nu$ we have that $\nu$ is uniquely determined too, as desired. The same argument works if $c=a_j$ for some $j\in\{1,...,n\}$ and $j\not=i$.

In a similar way we deal with the case $N=\ol{A}_i^{\lambda,\mu}$ for $i\in\{1,...,n\}$ and $\lambda,\mu\in\Lambda$.

It is easy to show by induction that each of $v_0,...,v_{r-1}$ contains exactly one occurrence of one of $\{S^\lambda : \lambda\in\Lambda\}$, and exactly at the end. Suppose $N=S^\epsilon$: if $u'=u$ then we are forced to use the production rule $S^\epsilon\pro\epsilon$, otherwise we look at the first letter of $u''$ and this forces our choice of production rule to use. Suppose $N=S^{a_i\lambda}$ with $i\in\{1,...,n\}$ and $a_i\lambda\in\Lambda$: then we must use the production rule $S^{a_i\lambda}\pro\ol{A}_i^{\lambda,\nu}S^\nu$ and $\nu\in\Lambda$ is uniquely determined with the same argument as above. The case $N=S^{\ol{a}_i\lambda}$ with $i\in\{1,...,n\}$ and $\ol{a}_i\lambda\in\Lambda$ is completely analogous.

This proves that the grammar is unambiguous.
\end{proof}

From the explicit description of the grammar, we can estimate $\norma{\cP}$. For example, if we count the production rules of the form $A_i^{a_j\lambda,\mu}\ \pro\ \ol{A}_j^{\lambda,\nu}A_i^{\nu,\mu}$ we have $n$ possibilities for the choice of $i$ and $\abs{\Lambda}^3$ possibilities for the choice of the suffixes $a_j\lambda,\mu,\nu$, providing a total of at most $n\abs{\Lambda}^3$ production rules; each of them has length $O(1)$, so this gives a summand at most $O(n\abs{\Lambda}^3)$ in $\norma{\cP}$. The other families of production rules are dealt with in a similar way, yielding that $\norma{\cP}$ is at most $O(nm\abs{\Lambda})+O(n\abs{\Lambda}^3)$. We estimate $\abs{\Lambda}$ as $O(m\norma{\phi})$ and we obtain that $\norma{\cP}$ is $O(nm^3\norma{\phi}^3)$. The proof of Proposition \ref{languagekernel} is thus complete.

\subsection{Complexity of the problem of the existence of equations}\label{SectionComplexity1}

We are ready to provide upper bounds on the running time of the algorithms of Theorems \ref{cfreeIg2} and \ref{cfreeJgd2}. Notice that all the running times provided in this section are polynomial in the input data.

Let $F_n$ be a free group on $n$ generators. Let $H\sgr F_n$ be a finitely generated subgroup of rank $r$, and let $g\in F_n$ be any element. Define $\varphi_g:H*\gen{x}\rar F_n$ be the map that is the inclusion on $H$ and that sends $x$ to $g$; in particular we have $\fI_g=\ker\varphi_g$. Let $L=\norma{\varphi_g}=\max\{l(h_1),...,l(h_r),l(g)\}$ where we mean the length of the reduced words representing those elements.

\begin{mythm}\label{cfreeIg3}
We have the following:

(i) The algorithm in Theorem \ref{cfreeIg2} can be constructed so as to run in time $O(nr^9L^3)$.

(ii) Moreover, this algorithm will output data that is sufficient for a description of an explicit equation (following the procedure described in Proposition \ref{explicit1}).

(iii) If the ideal $\fI_g$ contains a non-trivial equation of degree at most $D$, then the algorithm explicitly writes down a non-trivial equation in time $O(nr^{13}L^7D^2)$.
\end{mythm}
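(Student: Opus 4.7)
My plan is to recast the algorithm of Theorem \ref{cfreeIg2} as three standard steps — build a grammar for $\cL(\fI_g)$, intersect with a regular language of non-trivial reduced words, and then run an emptiness or extraction routine on the resulting grammar — and to track the complexity of each step using the tools of this section.

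I would first apply Proposition \ref{languagekernel} to the evaluation homomorphism $\varphi_g : H*\gen{x} \rar F_n$. Since the domain has rank $m = r+1$ and $\norma{\varphi_g} \le L$, this produces a grammar $(\cN,\cP,S)$ for $\cL(\fI_g) \subseteq \cB^*$ with $\norma{\cP} = O(nr^3L^3)$ and $\ram{\cP} = 2$. Next I would build a deterministic finite automaton for the regular language $\cR \subseteq \cB^*$ of non-trivial reduced words: the states record the last letter read (plus an initial state and a sink state), so $\abs{Q} = O(r)$. The deterministic case of Proposition \ref{cfreeintersection2} then yields a grammar $(\cN',\cP',S')$ for $\cL(\fI_g)\cap\cR$ with $\norma{\cP'} = O(\norma{\cP}\abs{Q}^{2+2\ram{\cP}}) = O(nr^9L^3)$, constructed in time $O(nr^9L^3)$.

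For part (i), I would run Proposition \ref{nonempty2} on $(\cN',\cP',S')$; it decides emptiness in time $O(\norma{\cP'}) = O(nr^9L^3)$, so the whole procedure costs $O(nr^9L^3)$. For part (ii), I would substitute Proposition \ref{explicit1} for Proposition \ref{nonempty2} at the last step: it runs in the same $O(nr^9L^3)$ time, and by construction its output is a list of production rules from which an explicit equation is assembled directly.

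For part (iii), I would instead feed the same grammar $(\cN',\cP',S')$ into Proposition \ref{explicit3} with length parameter $r_0 = CL^2D$ for a suitable absolute constant $C$. This would explicitly produce a non-trivial equation in time $O(\norma{\cP'} r_0^2) = O(nr^9L^3 \cdot L^4D^2) = O(nr^9L^7D^2)$, matching the target bound. The hard part, and the real content of (iii), is to justify the length estimate: I need to argue that whenever $\fI_g$ contains a non-trivial equation of degree at most $D$, it also contains one whose length as a word in $\cB^*$ is at most $CL^2D$. My plan for this is to start with any non-trivial equation of degree $\le D$, pass to a cyclic reduction $c_1x^{e_1}\cdots c_\alpha x^{e_\alpha}$ (which remains in $\fI_g$ by normality of $\fI_g$ in $H*\gen{x}$), and bound $\sum_i l(c_i)$ by analysing the free cancellations forcing $\varphi_g(c_1)g^{e_1}\cdots\varphi_g(c_\alpha)g^{e_\alpha} = 1$ in $F_n$, appealing to the short-equation bounds from \cite{PART1} if needed.
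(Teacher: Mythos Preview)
Your proposal is correct and follows essentially the same route as the paper: build the grammar for $\cL(\fI_g)$ via Proposition \ref{languagekernel}, intersect with a deterministic automaton of size $O(r)$ via Proposition \ref{cfreeintersection2}, then invoke Propositions \ref{nonempty2}, \ref{explicit1}, \ref{explicit3} for parts (i), (ii), (iii) respectively. For the length bound in (iii), the paper simply cites $\Pmindeglength$ of \cite{PART1} to get a non-trivial equation of length $O(L^2D)$ directly, so your proposed cancellation analysis is unnecessary --- just invoke that result.
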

\begin{proof}
According to Proposition \ref{languagekernel}, we have an (unambiguous) grammar $(\cN,\cP,S)$ for $\cL(\fI_g)$ such that $\norma{\cP}$ is $O(nr^3L^3)$ and $\ram{\cP}=2$. For the regular language $\cR$ of all non-empty reduced words in $\{h_1,...,h_r,x,\ol{h}_1,...,\ol{h}_r,\ol{x}\}$, it is easy to provide a deterministic finite automaton $(Q,\delta,\{q_0\},Q_f)$ such that $\abs{Q}$ is $O(r)$. Thus we can produce a context-free grammar $(\cN',\cP',S')$ for the (unambiguous) language $\cL(\fI_g)\cap\cR$ such that $\norma{\cP'}$ is $O(nr^9L^3)$. The first statement of Theorem \ref{cfreeIg3} now follows from Propositions \ref{nonempty2} and \ref{explicit1}.

If the ideal contains an equation of degree at most $D$, then by \cite{PART1} we have that $\fI_g$ must contain a non-trivial equation of length $O(L_0^2D)$, where $L_0$ is the number of edges in the core graph $\bcore{H}\vee\bcore{\gen{g}}$ as defined in \cite{PART1}. Since the number of edges in $\bcore{H}$ is at most $l(h_1)+...+l(h_r)$ and the number of edges in $\bcore{\gen{g}}$ is at most $l(g)$, we have that $L_0\le l(h_1)+...+l(h_r)+l(g)$ and thus $L_0$ is $O(rL)$. Thus $\fI_g$ must contain a non-trivial equation of length $O(r^2L^2D)$, and it follows from Proposition \ref{explicit3} that the algorithm writes down an explicit equation in time $O(nr^{13}L^7D^2)$.
\end{proof}

\begin{mythm}\label{cfreeJgd3}
We have the following:

(i) The algorithm in Theorem \ref{cfreeJgd2} can be constructed so as to run in time $O(nr^{15}L^3d^6)$.

(ii) Moreover, this algorithm will output data sufficient for a description of an explicit equation (following the procedure described in Proposition \ref{nonempty2}).

(iii) In addition, this algorithm explicitly writes down an equation in $\fJ_{g,d}$ in time $O(nr^{23}L^{11}d^{10})$.
\end{mythm}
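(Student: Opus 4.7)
The plan is to mirror the structure of the proof of Theorem \ref{cfreeIg3}, replacing the easy regular language ``non-empty reduced words'' by the more structured regular language $\cR$ of cyclically reduced words containing exactly $d$ occurrences of $x$ and $\ol{x}$, and then tracking how the additional parameter $d$ enters the complexity bounds.

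First I would invoke Proposition \ref{languagekernel} applied to the evaluation map $\varphi_g:H*\gen{x}\rar F_n$ (so that $m=r+1$ and $\norma{\varphi_g}=L$) to produce an unambiguous context-free grammar $(\cN,\cP,S)$ for $\cL(\fI_g)$ with $\norma{\cP}=O(nr^3L^3)$ and $\ram{\cP}=2$. Next I would build an explicit \emph{deterministic} finite automaton $(Q,\delta,\{q_0\},Q_f)$ for $\cR$: states record the first letter read, the most recently read letter, and a counter for the number of occurrences of $x,\ol{x}$ truncated at $d{+}1$, together with a sink state for overflow or reduction failures. Since the alphabet $\cB$ has size $2r+2$ and the counter takes $O(d)$ values, one obtains $\abs{Q}=O(r^2 d)$.

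I would then apply Proposition \ref{cfreeintersection2} to these two inputs. Because the automaton is deterministic, the algorithm produces a grammar $(\cN',\cP',S')$ for $\cL(\fI_g)\cap\cR$ in time
$$O\bigl(\norma{\cP}\cdot \abs{Q}^{2+2\ram{\cP}}\bigr)=O\bigl(nr^3L^3\cdot (r^2 d)^{6}\bigr)=O(nr^{15}L^3 d^6),$$
and the resulting grammar has $\norma{\cP'}=O(nr^{15}L^3 d^6)$. Feeding this grammar into Proposition \ref{nonempty2} decides emptiness in linear time in $\norma{\cP'}$, which proves part (i); a single additional pass via Proposition \ref{explicit1}, executed during the same traversal, outputs the list of production rules from which an explicit equation can be recovered, giving part (ii).

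For part (iii) I would invoke the length bound from \cite{PART1} (a consequence of $\Pmindeglength$ / $\Pdeglength$) to conclude that whenever $\fJ_{g,d}$ is non-empty it contains an element of length $O(L^4 d^2)$. Setting $r:=O(L^4 d^2)$ in Proposition \ref{explicit3} and running it on the grammar $(\cN',\cP',S')$ produces an explicit word in time
$$O\bigl(\norma{\cP'}\cdot r^2\bigr)=O\bigl(nr^{15}L^3 d^6 \cdot L^8 d^4\bigr)=O(nr^{15}L^{11}d^{10}),$$
which is the bound claimed. The main obstacles are bookkeeping: one must check that the deterministic automaton for $\cR$ can actually be constructed in time $O(\abs{Q})=O(r^2 d)$ rather than quadratic in $\abs{Q}$, and one must use the correct length bound from \cite{PART1} for equations of fixed degree $d$ (as opposed to the $O(L^2 D)$ bound used in Theorem \ref{cfreeIg3}, which only controls the minimum degree). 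The rest of the argument is a direct transcription of the Theorem \ref{cfreeIg3} proof with these two ingredients swapped in.
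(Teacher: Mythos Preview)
Your proposal is correct and follows precisely the approach the paper takes: the paper's own proof consists of the single sentence ``Completely analogous to the proof of Theorem \ref{cfreeIg3}. Instead of $\Pmindeglength$ of \cite{PART1} we use $\Pdeglength$ of \cite{PART1},'' and you have supplied exactly the computations that make this analogy explicit, including the correct state count $\abs{Q}=O(r^2d)$ for the deterministic automaton, the resulting size bound $O(nr^{15}L^3d^6)$ via Proposition \ref{cfreeintersection2}, and the length bound $O(L^4d^2)$ from $\Pdeglength$ feeding into Proposition \ref{explicit3} for part (iii).
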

\begin{proof}
Completely analogous to the proof of Theorem \ref{cfreeIg3}. Instead of the results of Section 4 of \cite{PART1} we use the results of Section 5.
\end{proof}

\subsection{The minimum possible degree for equations in an ideal}

\begin{mythm}\label{dmin}
There is an algorithm that, given $H\sgr F_n$ finitely generated and $g\in F_n$, computes the minimum possible degree for an equation in $\fI_g$. The algorithm runs in time $O(nr^9L^3\log(nrL))$.
\end{mythm}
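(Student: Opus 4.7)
The plan is to reuse the context-free grammar machinery developed in Theorem \ref{cfreeIg3} together with the weighted minimization algorithm of Proposition \ref{explicit2}. Concretely, first I would construct (exactly as in Theorem \ref{cfreeIg3}) an explicit unambiguous context-free grammar $(\cN',\cP',S')$ for the language $\cL(\fI_g)\cap\cR$, where $\cR\subseteq\cB^*$ is the regular language of all non-empty reduced words in the alphabet $\cB=\{h_1,\ol{h}_1,\dots,h_r,\ol{h}_r,x,\ol{x}\}$. By Propositions \ref{languagekernel} and \ref{cfreeintersection2} together with the deterministic finite automaton of size $O(r)$ recognising $\cR$, this grammar can be produced in time $O(nr^9L^3)$ and satisfies $\norma{\cP'}=O(nr^9L^3)$.

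Next I would introduce the size function $\sigma:\cB\rar\bN$ defined by $\sigma(x)=\sigma(\ol{x})=1$ and $\sigma(h_i)=\sigma(\ol{h}_i)=0$ for $i=1,\dots,r$. For a word $w\in\cB^*$, $\sigma(w)$ is simply the number of occurrences of $x$ and $\ol{x}$ in $w$. I would then apply the algorithm of Proposition \ref{explicit2} to the grammar $(\cN',\cP',S')$ and this choice of $\sigma$: the algorithm returns a list whose last entry gives $\sigma(S')=\min\{\sigma(w):w\in\cL(\fI_g)\cap\cR\}$. The output of the algorithm is then this integer $\sigma(S')$, which I claim equals $d_{\min}$.

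The main (and essentially only) conceptual step is the verification that $\sigma(S')=d_{\min}$. The inequality $\sigma(S')\ge d_{\min}$ follows because any $w\in\cL(\fI_g)\cap\cR$ represents a non-trivial equation whose cyclic reduction has degree equal to the number of $x,\ol{x}$ occurrences in the cyclic reduction, which is at most $\sigma(w)$ (cyclic reduction only removes letters). Conversely, if $w_0\in\fI_g$ is a non-trivial equation of degree $d_{\min}$, then its cyclic reduction $w_0'$ is a cyclically reduced and in particular reduced and non-empty word representing an equation in $\fI_g$, and $\sigma(w_0')=d_{\min}$ by definition of degree; thus $w_0'\in\cL(\fI_g)\cap\cR$ witnesses $\sigma(S')\le d_{\min}$.

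Finally I would bound the running time. The grammar construction costs $O(nr^9L^3)$ as above, and Proposition \ref{explicit2} applied to $(\cN',\cP',S')$ runs in time $O(\norma{\cP'}\log\abs{\cP'})$. Since $\abs{\cP'}\le\norma{\cP'}=O(nr^9L^3)$, we have $\log\abs{\cP'}=O(\log(nrL))$ (absorbing constants into the logarithm), and the overall cost is $O(nr^9L^3\log(nrL))$, as required. I do not expect a serious obstacle here: the only place subtlety enters is the correctness argument above, which relies on the elementary observation that cyclic reduction is length-non-increasing on each letter type.
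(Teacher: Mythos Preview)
Your proposal is correct and follows essentially the same approach as the paper: build the grammar for the equation language, then run the weighted-minimum algorithm of Proposition \ref{explicit2} with the size function that counts occurrences of $x,\ol{x}$. You are in fact more careful than the paper's own (very terse) proof, since you explicitly intersect with the regular language $\cR$ of non-empty reduced words to exclude the trivial equation and you verify that $\sigma(S')=d_{\min}$; the paper's stated complexity $O(nr^9L^3\log(nrL))$ indicates this intersection step was intended there as well.
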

\begin{proof}
We take the unambiguous grammar for $\cL(\fI_g)$ as described in Proposition \ref{languagekernel} and we run the algorithm of Proposition \ref{explicit2}; as size function we use $\sigma:\{h_1,...,h_r,x,\ol{h}_1,...,\ol{h}_r,\ol{x}\}\rar\bN$ given by $\sigma(h_i)=\sigma(\ol{h}_i)=0$ for $i=1,...,r$ and $\sigma(x)=\sigma(\ol{x})=1$.
\end{proof}

We now discuss the problem of finding an estimate on the minimum possible degree $d_{\min}$ for a non-trivial equation in $\fI_g$. A good bound on $d_{\min}$ leads to a good bound on the running time of the algorithm of Theorem \ref{cfreeIg3}. We first need a preliminary proposition.

\begin{myprop}\label{inversenorm}
Let $F_m$ be a finitely generated free group with a fixed basis $c_1,...,c_m$. Let $\psi:F_m\rar F_m$ be an automorphism and suppose each of $\psi(c_1),...,\psi(c_m)$ has length at most $C$ when written in the basis $c_1,...,c_m$. Then there is an inner automorphism $\rho:F_m\rar F_m$ such that each of $\psi^{-1}\circ\rho(c_1),...,\psi^{-1}\circ\rho(c_m)$ has length at most $K_mC^{M_m}$ when written in the basis $c_1,...,c_m$, for some constants $K_m,M_m$ independent on $\psi$.
\end{myprop}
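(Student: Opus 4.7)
The plan is to derive the statement directly from a result in \cite{Ventura}, already advertised by the author in the introduction (in connection with Theorem \ref{introbounddmin}) as providing the constants $K_{r+1}, M_{r+1}$. The relevant result in \cite{Ventura} says, roughly, that if $\psi\in\text{Aut}(F_m)$ has basis-length at most $C$ (meaning $l(\psi(c_i))\le C$ for all $i$), then the outer-automorphism class $[\psi^{-1}]\in\text{Out}(F_m)$ contains a representative whose basis-length is at most $K_mC^{M_m}$, with constants $K_m,M_m$ depending only on $m$.

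Given this, the proof consists of two short steps. First, apply the bound from \cite{Ventura} to obtain an automorphism $\psi'\in\text{Aut}(F_m)$ in the outer class $[\psi^{-1}]$ with $l(\psi'(c_i))\le K_mC^{M_m}$ for each $i$. Second, observe that since $\psi'$ and $\psi^{-1}$ induce the same outer automorphism, they differ by an inner automorphism: there exists $\rho\in\text{Inn}(F_m)$ such that $\psi'=\rho\circ\psi^{-1}$. This $\rho$ is the inner automorphism required by the statement, since by construction $\rho\circ\psi^{-1}(c_i)=\psi'(c_i)$ has length at most $K_mC^{M_m}$ in the basis $c_1,\ldots,c_m$.

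The main obstacle is ensuring that the precise statement in \cite{Ventura} aligns with our formulation. One must verify that the length function used there agrees with the basis-length $\max_i l(\psi(c_i))$ in our fixed basis, that the polynomial bound is given in terms of the rank $m$ alone (and not some auxiliary quantity), and that the result applies specifically to the inverse of an automorphism. Since the author has already committed to these constants in the introduction and used them to state Theorem \ref{introbounddmin}, this matching should be essentially bookkeeping. Once it is carried out, the proposition follows immediately by quoting the theorem and invoking the definition of $\text{Out}(F_m)=\text{Aut}(F_m)/\text{Inn}(F_m)$.
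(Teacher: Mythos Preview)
Your proposal is correct and matches the paper's approach: the paper's proof consists of the single sentence ``Immediately follows from Corollary 4.6 of \cite{Ventura}'', together with the remark that the constants $K_m,M_m$ are those of \cite{Ventura}. Your write-up simply unpacks the passage from the $\mathrm{Out}(F_m)$-level bound to the existence of the inner automorphism $\rho$, which is exactly the intended reading.
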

\begin{proof}
Immediately follows from Corollary 4.6 of \cite{LSV15}. The constants $K_m,M_m$ are the ones introduced in \cite{LSV15} too.
\end{proof}

Mantaining the notation of Section \ref{SectionComplexity1}, fix $F_n$ and $H\sgr F_n$ finitely generated and $g\in F_n$; denote with $h_1,...,h_r$ a fixed basis for $H$, with $\varphi_g:H*\gen{x}\rar F_n$ the evaluation map (whose kernel is $\fI_g$), and with $L=\norma{\varphi_g}=\max\{l(h_1),...,l(h_r),l(g)\}$.

\begin{mythm}\label{bounddmin}
If it is non-trivial, the ideal $\fI_g$ contains a non-trivial equation of degree at most $2K_{r+1}L^{M_{r+1}}$.
\end{mythm}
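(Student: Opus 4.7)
The plan is to produce a short element of $\fI_g$ by invoking Proposition \ref{inversenorm} on a carefully chosen automorphism of $F_{r+1} := H * \gen{x}$. The starting point is that, since $\fI_g$ is non-trivial, the subgroup $K := \gen{H, g} \sgr F_n$ has rank $s \leq r$; I would pick a basis $k_1, \dots, k_s$ of $K$ using the Stallings/core graph of $K$, so that each $k_i$ is reduced of length at most $L$ in $F_n$ and, by the same core-graph argument, the inequality $l_K(w) \leq l(w)$ holds for every $w \in K$. In particular, the expressions $h_i = u_i(k_1, \dots, k_s)$ and $g = v(k_1, \dots, k_s)$ satisfy $l(u_i), l(v) \leq L$.

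The heart of the argument is to build an automorphism $\psi$ of $F_{r+1}$ (with formal basis $e_1, \dots, e_r, e_{r+1}$ corresponding to $h_1, \dots, h_r, x$) with two properties: (a) $\psi^{-1}(e_j) \in \fI_g$ for some index $j$, and (b) $l(\psi(e_i)) \leq L+1$ for every $i$. In the toy case $s = r$ the set $\{h_1, \dots, h_r\}$ is automatically a basis of $K$, so one may take $\psi(e_i) = e_i$ for $i \leq r$ and $\psi(e_{r+1}) = e_{r+1} \cdot v(e_1, \dots, e_r)$, in which case $\psi^{-1}(e_{r+1}) = e_{r+1} \cdot v(e_1, \dots, e_r)^{-1} \in \fI_g$ directly. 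For the general case $s \leq r$, one exploits that the composition $F_{r+1} \twoheadrightarrow K \cong F_s$ is a surjection of free groups with source rank strictly larger than target rank, and uses the short expressions $u_i, v$ to write down an explicit automorphism $\psi$ realising a splitting in normal form, with the ``extra'' basis elements $e_{s+1}, \dots, e_{r+1}$ absorbing what is needed to make $\psi$ a genuine automorphism.

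Given such $\psi$, I would invoke Proposition \ref{inversenorm}: there exists an inner automorphism $\rho$ of $F_{r+1}$ with $l(\rho \circ \psi^{-1}(e_i)) \leq K_{r+1}(L+1)^{M_{r+1}}$ for each $i$. The element $w := \rho \circ \psi^{-1}(e_j)$ still lies in $\fI_g$ by normality of the ideal in $F_{r+1}$, and its degree in $x$ is at most its length; cyclic reduction only decreases this. The constant $2$ in the stated bound absorbs the small overhead of having $L+1$ in place of $L$ and the passage from length to degree.

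The main obstacle I expect is constructing $\psi$ with the required polynomial stretch bound in the case $s < r$. A naive Nielsen reduction of $\{h_1, \dots, h_r, g\}$ down to $\{k_1, \dots, k_s, 1, \dots, 1\}$ can introduce exponential stretch; fed into Proposition \ref{inversenorm}, this would yield only a doubly-exponential bound rather than the polynomial-in-$L$ bound stated. The correct construction must take the short expressions $u_i, v$ read off from the Stallings graph of $K$ as the defining data of $\psi$, rather than manipulating the original generating tuple move by move; verifying that the resulting data extends to a genuine automorphism of $F_{r+1}$, and organising which basis element is realised as the kernel element, is the key technical point.
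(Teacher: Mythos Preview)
Your overall strategy---apply Proposition~\ref{inversenorm} to a well-chosen automorphism of $F_{r+1}=H*\gen{x}$---matches the paper exactly; the gap is in producing that automorphism. The paper does not work with a Stallings basis of $K=\gen{H,g}$ at all. It instead invokes $\Pidealfingen$ of \cite{PART1}, whose folding construction yields a basis $c_1,\dots,c_{r+1}$ of $H*\gen{x}$ such that each of $h_1,\dots,h_r,x$ has length at most $L$ when written in the $c_i$'s, \emph{and} $\fI_g$ is normally generated by words of length at most $2$ in the $c_i$'s. Taking $\psi(c_i)=h_i$, $\psi(c_{r+1})=x$, Proposition~\ref{inversenorm} bounds the length of (a conjugate of) each $c_i$ in the basis $h_1,\dots,h_r,x$, and a length-$2$ normal generator then becomes an equation of length at most $2K_{r+1}L^{M_{r+1}}$. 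So the factor $2$ comes from the normal generators, not from an $(L{+}1)$-versus-$L$ adjustment as you suggest.

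Your toy case $s=r$ has a small slip: you invoke the Stallings basis $k_1,\dots,k_r$ to get $l_K(w)\le l(w)$, hence $l(v)\le L$, but then set $\psi(e_i)=e_i$, tacitly assuming $k_i=h_i$; for an arbitrary basis $h_1,\dots,h_r$ of $K$ that bound on $l(v)$ need not hold. The fix is to set $\psi(e_i)=u_i(e_1,\dots,e_r)$, which is still an automorphism since the $h_i$ form a basis of $K$. The substantive gap is the one you yourself name: for $s<r$ you give no construction of $\psi$. Note also that your target---some single $\psi^{-1}(e_j)\in\fI_g$ with each $\psi(e_i)$ of length at most $L{+}1$---is \emph{stronger} than what the paper actually obtains, and it is not clear it can be achieved directly; the paper sidesteps the issue by relaxing to length-$2$ normal generators and importing the needed basis from \cite{PART1}. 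That folding input is the missing ingredient.
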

\begin{proof}
As explained in $\Psecidealfingen$ of \cite{PART1}, we can find a basis $c_1,...,c_{r+1}$ for $H*\gen{x}$ with the following properties:

(i) $\fI_g$ is generated by words of length at most $2$ in $c_1,...,c_{r+1}$;

(ii) The length of $h_1$ (resp. $h_2,...,h_r,x$) written as a reduced word in $c_1,...,c_{r+1}$ is smaller or equal than the length of $h_1$ (resp. $h_2,...,h_r,g$) written as a reduced word in $a_1,...,a_n$.

The basis $c_1,...,c_{r+1}$ is obtained by looking at certain paths in the folded graph at the end of a suitable sequence of Stallings' folding operations; the details of the construction can be found in the remark at the end of Section 3 of \cite{PART1}. In particular, each of $h_1,...,h_r,x$ has length at most $L$ when written as a reduced word in $c_1,...,c_{r+1}$.

Take the automorphism $\psi:H*\gen{x}\rar H*\gen{x}$ with $\psi(c_1)=h_1,...,\psi(c_r)=h_r,\psi(c_{r+1})=x$. Each of $\psi(c_1),...,\psi(c_{r+1})$ has length at most $L$ when written in the basis $c_1,...,c_{r+1}$. By Proposition \ref{inversenorm}, there is a conjugation automorphism $\rho:H*\gen{x}\rar H*\gen{x}$ such that each of $\psi^{-1}\circ\rho(c_1),...,\psi^{-1}\circ\rho(c_{r+1})$ has length at most $K_{r+1}L^{M_{r+1}}$ when written in the basis $c_1,...,c_{r+1}$.

In particular we have that each of $\psi(\psi^{-1}\circ\rho(c_1)),...,\psi(\psi^{-1}\circ\rho(c_{r+1}))$ has length at most $K_{r+1}L^{M_{r+1}}$ when written in the basis $\psi(c_1),...,\psi(c_{r+1})$; this means that each of $\rho(c_1),...,\rho(c_{r+1})$ has length at most $K_{r+1}L^{M_{r+1}}$ when written in the basis $h_1,...,h_r,x$. We can take an equation which is a word of length at most two in $\rho(c_1),...,\rho(c_{r+1})$, and thus is a word of length at most $2K_{r+1}L^{M_{r+1}}$ in $h_1,...,h_r,x$. In particular its degree is at most $2K_{r+1}L^{M_{r+1}}$, as desired.
\end{proof}

One may hope to improve the bound of Theorem \ref{bounddmin} to a bound which is polynomial in both $r,L$. Unfortunately this isn't possible, as shown in Section \ref{exampledmin}, where we provide examples of ideals $\fI_g$ where $r=n+1$ and $L\approx p^2$ and the minimum possible degree for a non-trivial equation is at least $\approx p^n$, which is $\approx L^{\frac{r-1}{2}}$.

\subsection{Complexity of the problem of computing the growth rates}\label{SectionComplexityGrowth}

\begin{mythm}\label{growth3}
There is an algorithm as in Theorem \ref{growth2} that runs in polynomial time.
\end{mythm}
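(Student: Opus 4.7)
The plan is to trace through the proof of Theorem \ref{growth2} and verify that every step can be carried out in polynomial time, using the complexity bounds already established in the previous subsections together with the polynomial-time guarantee in Theorem \ref{cfreegrowth}.

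First, I would recall that the algorithm of Theorem \ref{growth2} works by producing a context-free grammar for $\clan{\fJ_{g,d}}$ and then applying Theorem \ref{cfreegrowth} to this grammar. For the first step, the construction of a grammar for $\clan{\fJ_{g,d}}$ is exactly what is analyzed in Theorem \ref{cfreeJgd3}: by Proposition \ref{languagekernel} we obtain an (unambiguous) grammar for $\cL(\fI_g)$ of total size $O(nr^3L^3)$ with ramification $2$, then intersect via Proposition \ref{cfreeintersection2} with a deterministic finite automaton of size $O(rd)$ recognizing the regular language of cyclically reduced words with exactly $d$ occurrences of $x$ and $\ol{x}$. Proposition \ref{cfreeintersection2} yields a grammar of size polynomial in $n,r,L,d$, constructed in polynomial time; the precise bound $O(nr^{15}L^3d^6)$ established in Theorem \ref{cfreeJgd3} is more than enough for our purposes here.

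Next, I would apply the algorithm of Theorem \ref{cfreegrowth}, which is guaranteed (by the result of \cite{Shallit}) to decide in polynomial time in the size of the input grammar whether the language has polynomial or exponential growth, and to compute the degree $k$ in the polynomial case. Since the input grammar has size polynomial in $n,r,L,d$, the total running time of the combined algorithm is polynomial in these parameters, as required.

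The only subtle point is that Theorem \ref{cfreegrowth} must be applied to a grammar recognizing $\clan{\fJ_{g,d}}$ rather than $\cL(\fJ_{g,d})$, since restricting to cyclically reduced words is essential for the growth rate to reflect the true number of distinct equations; this is exactly what the construction of Theorem \ref{cfreeJgd} / \ref{cfreeJgd3} produces, so no additional work is needed. There is no genuine obstacle in the proof: the entire argument is a bookkeeping exercise in combining the polynomial bounds from Theorem \ref{cfreeJgd3} with the polynomial-time guarantee of Theorem \ref{cfreegrowth}.
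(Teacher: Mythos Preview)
Your proposal is correct and follows essentially the same approach as the paper: build a polynomial-size grammar for the language $\clan{\fJ_{g,d}}$ via Proposition \ref{languagekernel} and Proposition \ref{cfreeintersection2} (as in Theorem \ref{cfreeJgd3}), then feed it to the polynomial-time algorithm of Theorem \ref{cfreegrowth} from \cite{Shallit}. Your write-up is in fact more careful than the paper's own three-sentence proof, in that you explicitly flag the need for $\clan{\fJ_{g,d}}$ rather than $\cL(\fJ_{g,d})$.
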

\begin{proof}
According to Proposition \ref{languagekernel} we can build, in polynomial time in $H,g,d$, a grammar $(\cP,\cN,S)$ for $\cL(\fJ_{g,d})$ with polynomial size $\norma{\cP}$. The algorithm of Theorem \ref{cfreegrowth} runs in polynomial time too, according to \cite{GKRS08}. We run such algorithm on such grammar, and the conclusion follows.
\end{proof}

We point out that most of the complexity of the algorithm of Theorem \ref{cfreegrowth} comes from the possible presence of ambiguity in the grammar. If we are only interested in unambiguous grammars, then the algorithm can be made much easier and much faster. Since our grammar is obtained using Propositions \ref{languagekernel} and \ref{cfreeintersection2}, it is unambiguous, and so we are able to apply the easier version of the algorithm.

\section{Examples}\label{SectionExamples}

We now provide a few examples. In each of them we have a free group $F_n$, a subgroup $H\sgr F_n$ and an element $g\in F_n$ such that $\fI_g$ is non-trivial; we are interested in studying the growth rate of the functions $\rho_{g,d}$ according to Theorem \ref{growth2}. Some of the examples are taken from \cite{PART1}.

In Section \ref{examplecyclic} we deal with the case where $\rank{H}=1$. We show that $\rho_{g,d}(M)$ has polynomial growth rate for every $d\in D_g$, in contrast with Theorem \ref{Dgpolk}.

In Example \ref{example46} we show that it is possible that the function $\rho_{g,d}(M)$ has polynomial growth rate of degree zero, i.e. it is constant for all $M$ big enough.

We also provide the additional Example \ref{exampledmin}, where the minimum degree in $D_g$ is big compared to the size of the subgroup $H$.

\subsection{Cyclic subgroups}\label{examplecyclic}

We here deal with the case where $\rank{H}=1$. As shown in $\Pexamplecyclic$ of \cite{PART1}, we can assume that $F_n=\gen{a}$ and that $H=\gen{h}$ where $h=a^m$ with $m\ge1$, and that $g=a^k$ with $k\ge0$ coprime with $m$.

Every cyclically reduced equation in $\fI_g$ has the form $w=h^{e_0}x^{f_1}h^{e_1}...h^{e_{s-1}}x^{f_s}h^{e_s}$ with $s\ge1$ and $e_1,...,e_{s-1},f_1,...,f_s\in\bZ\setminus\{0\}$ and $e_0,e_s\in\bZ$ with the same sign (possibly one or both of them equal to zero). When we substitute $h=a^m$ and $x=a^k$ we obtain the trivial word, and thus we must have $(e_0+...+e_s)m+(f_1+...+f_s)k=0$. The degree of such equation is $\abs{f_1}+...+\abs{f_s}$.

We want to estimate $\rho_{g,d}(M)$ and thus we restrict our attention to equations of degree $d$ and length at most $M$. This implies that $s\le d$ and gives only a finite number of possible choices for the numbers $f_1,...,f_s$. Once $s,f_1,...,f_s$ are fixed, we must choose $e_0,...,e_s$ such that $e_0+...+e_s=-(f_1+...+f_s)k/m$ and such that $\abs{e_0}+...+\abs{e_s}\le M-d$. The number of possible choices is polynomial of degree at most $s$ in $M$.

This shows that $\rho_{g,d}(M)$ has polynomial growth of degree at most $d$ for every $m\ge1$ and $k\ge0$ and $d\in D_g$.

\subsection{An ideal with a finite number of minimum-degree equations}\label{example46}

As in $\Pexampleeven$ of \cite{PART1}, let $F_2=\gen{a,b}$ and consider the subgroup $H=\gen{h_1,h_2}$ with $h_1=ba$ and $h_2=ab^2\ol{a}$ and the element $g=a$. In this case the ideal $\fI_g$ is the normal subgroup generated by the equation $\ol{x}h_2xx\ol{h}_1x\ol{h}_1$ and we have $D_g=\{d : d\ge4$ even$\}$.

The function $\rho_{g,4}(M)$ is constant for all $M$ big enough, since $\fI_g$ only contains one equation of degree $4$ up to inverse and conjugations, which is the generator of the ideal. The function $\rho_{g,6}(M)$ has polynomial growth of degree at least one. The function $\rho_{g,d}(M)$ has exponential growth for every $d\ge8$ even, by Lemma \ref{expdegrees}.

\subsection{An ideal where the minimum-degree equations have linear growth}\label{example23}

As in $\Pexampleodd$ of \cite{PART1}, let $F_2=\gen{a,b}$ and consider the subgroup $H=\gen{h_1,h_2}$ with $h_1=b$ and $h_2=ababa$. Let $g=a$. The ideal $\fI_g$ is the normal subgroup generated by $\ol{h}_2xh_1xh_1x$. We have $D_g=\{d : d\ge2$ integer$\}$ as the ideal contains equations of degree $2$ (for example $h_2h_1x\ol{h}_2\ol{h}_1\ol{x}\in\fI_g$).

The function $\rho_{g,2}(M)$ has linear growth, since a complete list of the degree $2$ equations, up to inverses and conjugations, is given by $[(h_2h_1)^i,xh_1]$ for $i\in\bZ\setminus\{0\}$. The function $\rho_{g,3}(M)$ has polynomial growth of degree at least one. The function $\rho_{g,d}(M)$ has exponential growth for $d\ge4$, by Lemma \ref{expdegrees}.

\subsection{An ideal with no small-degree equations}\label{exampledmin}

The idea of this example is taken from Section 4.1 of \cite{LSV15}, but some changes were needed to fit our purposes. Fix a natural number $p$ and set $u(y,x)=xyx^2yx^3...yx^{p+1}$.

For $n\ge2$ let $F_{n+1}=\gen{a_1,...,a_n,b}$. Consider the subgroup $H=\gen{h_1,...,h_n,h'}$ where $h_1=a_1$ and $h_i=a_i\cdot(u(a_{i-1},b))^{-1}$ for $i=2,...,n$ and $h'=\ol{a}_n\ol{b}$. Consider the element $g=b$.

Consider the alphabet $\cH=\{h_1,\ol{h}_1,...,h_n,\ol{h}_n,h',\ol{h'},x,\ol{x}\}$. Define $\oc{a}_1,...,\oc{a}_n\in\cH^*$ as follows: we set $\oc{a}_1=h_1$, and we define inductively $\oc{a}_i=h_iu(\oc{a}_{i-1},x)$. Consider the natural projection map $\eta:\cH^*\rar H*\gen{x}$ defined as at the beginning of Section \ref{Subsectioneta}; consider also the evaluation homomorphism $\varphi_g:H*\gen{x}\rar F_{n+1}$ which is the identity on $H$ and with $\varphi_g(x)=g$. It is easy to show by induction that $\varphi_g(\eta(\oc{a}_i))=a_i$. It is also easy to prove by induction that

(a) The word $\oc{a}_i$ only contains the letters $h_1,...,h_i,x$.

(b) For every two consecutive letters in $\oc{a}_i$, at least one is $x$.

(c) The word $\oc{a}_i$ has length $\frac{p^2+3p+4}{2}\cdot\frac{p^{i-1}-1}{p-1}+p^{i-1}$.

Finally, define the element $w\in\cH^*$ given by $w=h'x\oc{a}_n$. In figure \ref{figexampledmin} we can see the core graph $\bcore{H}$, as defined in \cite{PART1}. By means of the algorithm described in Section 3 of \cite{PART1}, we can prove that $\fI_g$ is generated, as a normal subgroup of $H*\gen{x}$, by the single element $\eta(w)$. 

We now want to apply Theorem 4.4 of \cite{LS01} to the ideal $\fI_g$. We take the symmetrized set $W$ given by the cyclic permutations of $w$ and of $\ol{w}$, and we want to prove that the small cancellation property $C'(1/6)$ holds. A cyclic permutation of $w$ and a cyclic permutation of $\ol{w}$ can't have an initial segment in common, since $w$ only contains the letters $h_1,...,h_n,h',x$. Suppose $w',w''$ are distinct cyclic permutations of $w$ with a common initial segment $u$. We observe that, since $w$ only contains one occurrence of $h'$, $u$ can't contain $h'$. We can write
$$w=h'xh_n\cdot xh_{n-1}u(\oc{a}_{n-2},x)\cdot x^2h_{n-1}u(\oc{a}_{n-2},x)\cdot ...\cdot x^ph_{n-1}u(\oc{a}_{n-2},x)\cdot x^{p+1}$$
and it is easy to see that, between two consecutive occurrences of $h_{n-1}$, there is always a different number of letters. This implies that $u$ contains at most one occurrence of $h_{n-1}$, and in particular $l(u)$ is at most $\approx\frac{2}{p}\cdot l(w)$. For $p$ big enough this proves that $C'(1/6)$ holds in this case.

By Dehn's algorithm (Theorem 4.4 of \cite{LS01}), every equation in $\fI_g$ has a subword of length $\frac{1}{2}\abs{w}$ in common with a cyclic permutation of $w$ or $\ol{w}$. For any two consecutive letters of $w$, at least one is $x$; a subword of length $\frac{1}{2}\abs{w}$ has to contain at least $\frac{1}{4}\abs{w}-1$ letters $x$ or $\ol{x}$. This shows that every non-trivial equation in $\fI_g$ has degree at least
$$\frac{p^2+3p+4}{8}\cdot\frac{p^{n-1}-1}{p-1}+\frac{1}{4}p^{n-1}-\frac{1}{2}$$
which, for $p$ big enough, is at least $p^n/8$.

Thus in this case we have a subgroup $H$ with $n+1$ generators, each of length at most $\frac{p^2+5p+6}{2}$, and where the minimum degree for a non-trivial equation in $\fI_g$ is at least $p^n/8$.

\begin{figure}[h!]
\centering
\begin{tikzpicture}

\node (1) at (6,0) {$*$};
\node (2) at (-6,0) {$.$};

\draw[->,out=120,in=60,looseness=17] (1) to node[above]{$a_1$} (1);
\draw[->] (1) to node[above]{$b$} (2);
\draw[->,out=30,in=150,looseness=1] (2) to node[above]{$a_n$} (1);

\node (3a) at (4,-1) {$.$};
\node (3b) at (-5,-0.5) {$.$};
\node (3c) at (-4,-1) {$.$};
\node (3d) at (-3,-1) {$.$};
\node (3e) at (-2,-1) {$.$};
\draw[->] (1) to node[left]{$a_2$} (3a);
\draw[->] (2) to node[right]{$a_1$} (3b);
\draw[->] (3b) to node[above]{$b$} (3c);
\draw[->] (3c) to node[above]{$b$} (3d);
\draw[->] (3d) to node[above]{$a_1$} (3e);
\draw[dotted] (3e) to (3a);

\node (4a) at (4,-2) {$.$};
\node (4b) at (-5,-1) {$.$};
\node (4c) at (-4,-2) {$.$};
\node (4d) at (-3,-2) {$.$};
\node (4e) at (-2,-2) {$.$};
\draw[->] (1) to node[left]{$a_3$} (4a);
\draw[->] (2) to node[below]{$a_2$} (4b);
\draw[->] (4b) to node[above]{$b$} (4c);
\draw[->] (4c) to node[above]{$b$} (4d);
\draw[->] (4d) to node[above]{$a_2$} (4e);
\draw[dotted] (4e) to (4a);

\node (5a) at (4,-5) {$.$};
\node (5b) at (-5,-2.5) {$.$};
\node (5c) at (-4,-5) {$.$};
\node (5d) at (-3,-5) {$.$};
\node (5e) at (-2,-5) {$.$};
\draw[->] (1) to node[left]{$a_n$} (5a);
\draw[->] (2) to node[left]{$a_{n-1}$} (5b);
\draw[->] (5b) to node[right]{$b$} (5c);
\draw[->] (5c) to node[above]{$b$} (5d);
\draw[->] (5d) to node[above]{$a_{n-1}$} (5e);
\draw[dotted] (5e) to (5a);

\draw[loosely dotted] (0,-2.5) to (0,-4.5);

\end{tikzpicture}
\caption{The core graph $\core{H}$, as defined in \cite{PART1}, of the subgroup $H$ of Section \ref{exampledmin}.}\label{figexampledmin}
\end{figure}
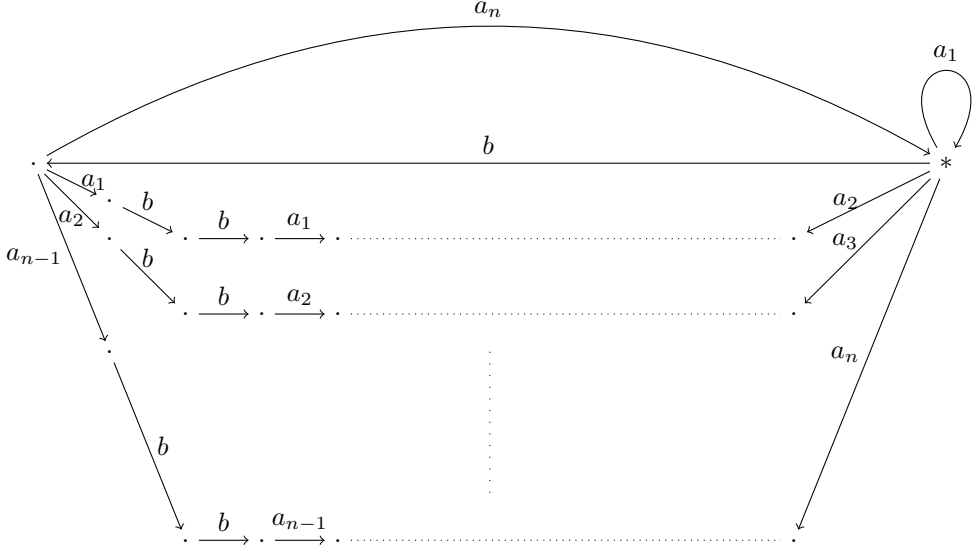

\section{Equations in more variables}\label{SectionMultivariate}

We point out that most of the results of this paper can be generalized to equations in more than one variable. We gather the statements of these results in this section, but since the proofs are completely analogous to the one-variable case, we only provide sketches. Let $F_n$ be a free group generated by $n$ elements $a_1,...,a_n$.

Let $H\sgr F_n$ be a finitely generated subgroup and let $\gen{x_1},\gen{x_2},...,\gen{x_m}\cong\bZ$ be infinite cyclic groups with generators $x_1,...,x_m$ respectively. Let $(g_1,...,g_m)\in (F_n)^m$ be an $m$-tuple of elements; see \cite{PART1} for the definition of the ideal $\fI_{g_1,...,g_m}$, as well as for the definition of multi-degree $(d_1,...,d_m)$ of an equation $w\in\fI_{g_1,...,g_m}$. Theorems \ref{cfreeIg}, \ref{cfreeIg2} and \ref{cfreeIg3} can be generalized as follows:

\begin{mythm}\label{theorem7.1}
The set $\fI_{g_1,...,g_m}$ is context-free as a subset of $H*\gen{x_1}*...*\gen{x_m}$. There is an algorithm that, given $H,g_1,...,g_m$, tells us whether $\fI_{g_1,...,g_m}$ contains a non-trivial equation and, in case it does, also produces data sufficient for an explicit description of a non-trivial equation in $\fI_{g_1,...,g_m}$ (as specified in Proposition \ref{explicit1}). The algorithm runs in polynomial time.
\end{mythm}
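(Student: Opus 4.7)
The plan is to follow, step by step, the same path that leads from Theorem \ref{cfreeIg} through Theorem \ref{cfreeIg3} in the one-variable setting, replacing the single variable $x$ by the tuple $(x_1,\ldots,x_m)$ at every stage. The only structural ingredient we use about $\gen{x}$ in the one-variable proofs is that it is a free factor of $H*\gen{x}$ with free basis $x$; the free product $H*\gen{x_1}*\cdots*\gen{x_m}$ enjoys the obvious analogue, with free basis $h_1,\ldots,h_r,x_1,\ldots,x_m$.

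First, to prove context-freeness of $\fI_{g_1,\ldots,g_m}$, I would set $\cA=\{a_1,\ol{a}_1,\ldots,a_n,\ol{a}_n\}$ and $\cB=\{h_1,\ol{h}_1,\ldots,h_r,\ol{h}_r,x_1,\ol{x}_1,\ldots,x_m,\ol{x}_m\}$, and define $\psi:\cB^*\rar\cA^*$ as the concatenation-preserving extension of the map that sends each $h_i$ to a reduced word representing it in $F_n$ and each $x_j$ to a reduced word representing $g_j$. Then the evaluation homomorphism $\varphi_{g_1,\ldots,g_m}:H*\gen{x_1}*\cdots*\gen{x_m}\rar F_n$ satisfies $\cL(\fI_{g_1,\ldots,g_m})=\psi^{-1}(\cL(1))$. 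By Lemma \ref{trivial} and Proposition \ref{cfreepreimage}, this preimage is context-free, giving the first claim.

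Second, for the algorithmic and polynomial-time part, I would upgrade Proposition \ref{languagekernel} to the present setting: the homomorphism $\phi:F_{r+m}\rar F_n$ of interest sends the $i$-th basis element to $h_i$ for $i\le r$ and to $g_{i-r}$ for $i>r$, so the grammar produced by Proposition \ref{languagekernel} (with $m$ replaced by $r+m$) is already an unambiguous context-free grammar of size $O\!\bigl(n(r+m)^3 L^3\bigr)$ for $\cL(\fI_{g_1,\ldots,g_m})$, where $L=\max\{l(h_1),\ldots,l(h_r),l(g_1),\ldots,l(g_m)\}$. Intersecting with the regular language of non-empty reduced words in $\cB^*$ via Proposition \ref{cfreeintersection2} (the automaton has $O(r+m)$ states and is deterministic) yields an unambiguous grammar of polynomial size. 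Running the algorithm of Proposition \ref{nonempty2} decides emptiness in time linear in the grammar size, and Proposition \ref{explicit1} (or Proposition \ref{explicit2}) produces data sufficient to describe an explicit equation. If an explicit written word is required, one invokes the multivariate version of $\Pmindeglength$ of \cite{PART1} to get a polynomial length bound and then applies Proposition \ref{explicit3}, exactly as in the proof of Theorem \ref{cfreeIg3}.

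The only step that requires genuine attention is the multivariate analogue of Proposition \ref{languagekernel}: one has to check that the construction of the unambiguous grammar for the kernel of a free-group homomorphism $\phi:F_{r+m}\rar F_n$ goes through verbatim regardless of the number of generators in the source, which it does because the grammar was built using only the reduced words $\beta_k,\ol\beta_k$ representing $\phi(b_k),\phi(\ol b_k)$, and nothing in the combinatorics of Lemmas \ref{lemma1ker}, \ref{lemma2ker}, \ref{lemma3ker} distinguishes some generators from others. With that observation, every remaining estimate (grammar size, automaton size, intersection cost, emptiness check) is polynomial in $n,r,m,L$, establishing the polynomial-time claim.
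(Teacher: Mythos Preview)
Your proposal is correct and follows exactly the route the paper intends: the paper explicitly omits the proof, stating that it is ``completely analogous to the one-variable case,'' and your sketch carries out precisely that analogy, applying Proposition \ref{languagekernel} with the source free group of rank $r+m$ and then Propositions \ref{cfreeintersection2}, \ref{nonempty2}, and \ref{explicit1}/\ref{explicit2} as in Theorems \ref{cfreeIg}, \ref{cfreeIg2}, and \ref{cfreeIg3}. Your observation that nothing in Lemmas \ref{lemma1ker}--\ref{lemma3ker} distinguishes among the source generators is exactly the point that makes the generalization go through without modification.
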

\begin{proof}[Sketch of proof]
The set $\fI_{g_1,...,g_m}$ can be seen as the kernel of a homomorphism between free groups, and thus is context-free. A context-free grammar $(\cN,\cP,S)$ for $\cL(\fI_{g_1,...,g_m})$ can be computed algorithmically, according to Proposition \ref{languagekernel}; the grammar has $\norma{\cP}$ polynomially bounded, and can be computed in polynomial time. The conclusion follows from Propositions \ref{nonempty2} and \ref{explicit1}.
\end{proof}

As in the one-variable case, we consider the subset $\fJ_{g_1,...,g_m,(d_1,...,d_m)}$ of $\fI_{g_1,...,g_m}$ given by all the equations of multi-degree $(d_1,...,d_m)$. Theorems \ref{cfreeJgd}, \ref{cfreeJgd2} and \ref{cfreeJgd3} can be generalized as follows:

\begin{mythm}
The set $\fJ_{g_1,...,g_m,(d_1,...,d_m)}$ is context-free as a subset of $H*\gen{x_1}*...*\gen{x_m}$. There is an algorithm that, given $H,g_1,...,g_m,d_1,...,d_m$, tells us whether the set $\fJ_{g_1,...,g_m,(d_1,...,d_m)}$ is non-empty and, if so, explicitly writes down an element of $\fJ_{g_1,...,g_m,(d_1,...,d_m)}$. The algorithm runs in polynomial time.
\end{mythm}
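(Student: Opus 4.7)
The plan is to mirror the one-variable arguments used for Theorems \ref{cfreeIg}, \ref{cfreeJgd}, \ref{cfreeIg2}, \ref{cfreeJgd2}, \ref{cfreeIg3} and \ref{cfreeJgd3}, replacing the single variable $x$ by the tuple $x_1,\dots,x_m$ throughout. Work with the alphabet $\cB=\{h_1,\ol{h}_1,\dots,h_r,\ol{h}_r,x_1,\ol{x}_1,\dots,x_m,\ol{x}_m\}$ and the concatenation-preserving map $\psi\colon\cB^*\rar\cA^*$ sending each $h_i^{\pm 1}$ to the reduced word representing it in $F_n$ and each $x_i^{\pm 1}$ to the reduced word representing $g_i^{\pm 1}$. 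Lemma \ref{trivial} gives a context-free grammar for $\cL(1)\subseteq\cA^*$, and Proposition \ref{cfreepreimage} then yields that $\psi^{-1}(\cL(1))=\cL(\fI_{g_1,\dots,g_m})\subseteq\cB^*$ is context-free, establishing context-freeness of the multi-variable ideal and providing its grammar algorithmically.

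Next, to obtain the statement for $\fJ_{g_1,\dots,g_m,(d_1,\dots,d_m)}$, build a deterministic finite automaton for the regular language $\cR$ of cyclically reduced words in $\cB^*$ containing exactly $d_i$ occurrences of $x_i$ or $\ol{x}_i$ for each $i=1,\dots,m$. Such an automaton can be taken as the product of a small component tracking cyclic reducedness (remembering the initial letter and the current letter, of size $O((r+m)^2)$) with one counter of size $d_i+2$ for each variable, for a total of $O((r+m)^2\prod_{i=1}^m(d_i+1))$ states. By Proposition \ref{cfreeintersection}, $\cL(\fI_{g_1,\dots,g_m})\cap\cR$ is context-free and consists precisely of the cyclically reduced words in $\fJ_{g_1,\dots,g_m,(d_1,\dots,d_m)}$; closing under conjugation via Proposition \ref{cfreeconjsat} recovers $\cL(\fJ_{g_1,\dots,g_m,(d_1,\dots,d_m)})$ and proves context-freeness.

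For the algorithmic and polynomial-time claims, replace the appeals to Propositions \ref{languagekernel} and \ref{cfreeintersection2} in the proofs of Theorems \ref{cfreeIg3} and \ref{cfreeJgd3} by their straightforward multi-variable analogues: Proposition \ref{languagekernel} already applies verbatim to the evaluation homomorphism $\varphi_{g_1,\dots,g_m}\colon H*\gen{x_1}*\dots*\gen{x_m}\rar F_n$ (treated as $\phi$ in its statement, with the parameter $m$ there replaced by $r+m$), yielding an unambiguous grammar of polynomial size and constant ramification; intersecting with $\cR$ via the deterministic version of Proposition \ref{cfreeintersection2} preserves the polynomial size bound. Propositions \ref{nonempty2}, \ref{explicit1} and \ref{explicit3} then decide emptiness and produce the required data in polynomial time.

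The main obstacle is bookkeeping rather than mathematical: one has to verify that the size of the product automaton for $\cR$, which is $O((r+m)^2\prod_i(d_i+1))$, remains polynomial in the input parameters (which it does as soon as the $d_i$ are given in unary, or whenever $m$ is viewed as a constant), and to track through the multi-variable analogues of Propositions \ref{languagekernel} and \ref{cfreeintersection2} to obtain explicit polynomial degree bounds for the resulting grammar. Once this is done, every step of the one-variable proofs carries over unchanged, and the required statements about context-freeness, decidability and explicit construction all follow.
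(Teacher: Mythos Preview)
Your proposal is correct and follows exactly the approach the paper intends: the paper omits the proof entirely, stating only that it is ``completely analogous to the one-variable case,'' and you have spelled out precisely that analogy (using Proposition~\ref{languagekernel} for the evaluation map $\varphi_{g_1,\dots,g_m}$, intersecting with the product automaton for the regular language $\cR$, and appealing to Propositions~\ref{nonempty2}, \ref{explicit1}, \ref{explicit3}). Your remark about the automaton size $\prod_i(d_i+1)$ being polynomial only when the $d_i$ are given in unary (or $m$ is fixed) is a point the paper glosses over; note also that the explicit-writing step ultimately relies on the multi-variable analogue of the length bound $\Pdeglength$ from \cite{PART1}, which is implicit in your ``every step carries over unchanged.''
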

\begin{proof}[Sketch of proof]
We intersect the context-free language $\cL(\fI_{g_1,...,g_m})$ with the regular language of all cyclically reduced words that contain exactly $d_i$ occurrences of $x_i$ and $\ol{x}_i$, for $i=1,...,m$; this proves that $\clan{\fJ_{g_1,...,g_m,(d_1,...,d_m)}}$ is context-free. We take a grammar $(\cN,\cP,S)$ for $\clan{\fJ_{g_1,...,g_m,(d_1,...,d_m)}}$ and we add production rules as in the proof of Theorem \ref{cfreeJgd}, in order to obtain a bigger context-free language $\cL'$ such that $\rlan{\fJ_{g_1,...,g_m,(d_1,...,d_m)}}\subseteq\cL'\subseteq\cL(\fJ_{g_1,...,g_m,(d_1,...,d_m)})$. We intersect $\cL'$ with the regular language of all reduced words, and we obtain that $\rlan{\fJ_{g_1,...,g_m,(d_1,...,d_m)}}$ is context-free, proving the first sentence of the statement.

As in \ref{theorem7.1}, we can produce in polynomial time a context-free grammar $(\cM,\cQ,T)$ for $\cL(\fI_{g_1,...,g_m})$ with $\norma{\cQ}$ polynomially bounded. By Proposition \ref{cfreeintersection2} we can produce in polynomial time a context-free grammar $(\cM',\cQ',T')$ for $\clan{\fJ_{g_1,...,g_m,(d_1,...,d_m)}}$ with $\norma{\cQ'}$ polynomially bounded. We now run the algorithm of Proposition \ref{nonempty2} on this grammar: we can check in polynomial time whether the language is non-empty. In order to produce in polynomial time an explicit equation, we use the results of \cite{PART1} to obtain a polynomial bound on the length of such an equation, and then we apply the algorithm of Proposition \ref{explicit3}.
\end{proof}

Let $L=\max\{l(h_1),...,l(h_r),l(g_1),...,l(g_m)\}$, where $l(h_1),...,l(h_r),l(g_1),...,l(g_m)$ are the lengths of the reduced words in the basis $a_1,...,a_n$ representing those elements. It is possible to generalize Theorem \ref{bounddmin} as follows:

\begin{mythm}
The ideal $\fI_{g_1,...,g_m}$ contains a non-trivial equation of multi-degree $(d_1,...,d_m)$ with $d_1+...+d_m\le 2K_{r+m}L^{M_{r+m}}$.
\end{mythm}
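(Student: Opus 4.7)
The plan is to follow the proof of Theorem \ref{bounddmin} almost verbatim, simply replacing the rank $r+1$ of $H*\gen{x}$ by the rank $r+m$ of $H*\gen{x_1}*...*\gen{x_m}$. The key observation is that the multi-degree of an equation is the cyclically reduced count of occurrences of the variables $x_1,...,x_m$, so the sum $d_1+\dots+d_m$ is bounded above by the total number of $x_j^{\pm1}$ letters in any (not necessarily cyclically reduced) word representing that equation; in particular it is bounded by the total length of such a word written in the basis $h_1,...,h_r,x_1,...,x_m$.

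First I would invoke a multivariate version of $\Pidealfingen$ of \cite{PART1}: one obtains algorithmically a basis $c_1,...,c_{r+m}$ of the free group $H*\gen{x_1}*...*\gen{x_m}$ such that each of $h_1,...,h_r,x_1,...,x_m$ has length at most $L$ when written as a reduced word in $c_1,...,c_{r+m}$, and such that $\fI_{g_1,...,g_m}$ is generated, as a normal subgroup, by elements of length at most $2$ in this new basis.

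Next I would define the automorphism $\psi:H*\gen{x_1}*...*\gen{x_m}\rar H*\gen{x_1}*...*\gen{x_m}$ by $\psi(c_i)=h_i$ for $i=1,...,r$ and $\psi(c_{r+j})=x_j$ for $j=1,...,m$; by construction, each image $\psi(c_k)$ has length at most $L$ in the basis $c_1,...,c_{r+m}$. Applying Proposition \ref{inversenorm} (with the ambient rank equal to $r+m$), we obtain a conjugation automorphism $\rho$ such that each of $\psi^{-1}\circ\rho(c_1),...,\psi^{-1}\circ\rho(c_{r+m})$ has length at most $K_{r+m}L^{M_{r+m}}$ when written in $c_1,...,c_{r+m}$. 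Applying $\psi$ to both sides gives that each of $\rho(c_1),...,\rho(c_{r+m})$ has length at most $K_{r+m}L^{M_{r+m}}$ when written in the basis $\psi(c_1),...,\psi(c_{r+m})=h_1,...,h_r,x_1,...,x_m$.

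To finish, I would pick a non-trivial element of $\fI_{g_1,...,g_m}$ which is a word of length at most $2$ in $c_1,...,c_{r+m}$, and observe that conjugation by the inner automorphism $\rho$ preserves the ideal (as the ideal is normal) and produces an element which is a word of length at most $2$ in $\rho(c_1),...,\rho(c_{r+m})$; expanding, this is a word of length at most $2K_{r+m}L^{M_{r+m}}$ in $h_1,...,h_r,x_1,...,x_m$. The total number of occurrences of $x_j^{\pm1}$ in this word is then at most $2K_{r+m}L^{M_{r+m}}$, which bounds $d_1+\dots+d_m$ as desired. The only non-routine point is verifying that the multivariate form of $\Pidealfingen$ of \cite{PART1} is indeed available with the stated parameters; this should follow by an essentially identical folding argument to the one-variable case, treating $x_1,...,x_m$ symmetrically.
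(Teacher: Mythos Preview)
Your proposal is correct and follows exactly the approach the paper intends: the paper omits the proof of this multivariate statement, saying only that it is ``completely analogous'' to Theorem~\ref{bounddmin}, and you have faithfully carried out that analogy by replacing $r+1$ with $r+m$ throughout and invoking Proposition~\ref{inversenorm} at rank $r+m$. Your handling of the final step---applying the inner automorphism $\rho$ to a short normal generator to land back in the ideal, then bounding $d_1+\dots+d_m$ by the word length in the basis $h_1,\dots,h_r,x_1,\dots,x_m$---matches the paper's one-variable argument, and your caveat about the multivariate $\Pidealfingen$ is appropriate and consistent with the paper's own reliance on \cite{PART1}.
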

\begin{proof}
As explained in $\Psecidealfingen$ of \cite{PART1}, we can find a basis $c_1,...,c_{r+m}$ for $H*\gen{x_1}*...*\gen{x_m}$ with the following properties:

(i) $\fI_g$ is generated by words of length at most $2$ in $c_1,...,c_{r+m}$;

(ii) The length of $h_1$ (resp. $h_2,...,h_r,x_1,...,x_m$) written as a reduced word in $c_1,...,c_{r+m}$ is smaller or equal than the length of $h_1$ (resp. $h_2,...,h_r,g_1,...,g_m$) written as a reduced word in $a_1,...,a_n$.

In particular, each of $h_1,...,h_r,x_1,...,x_m$ has length at most $L$ when written as a reduced word in $c_1,...,c_{r+m}$.

We now take the automorphism $\psi:H*\gen{x_1}*...*\gen{x_m}\rar H*\gen{x_1}*...*\gen{x_m}$ with $\psi(c_1)=h_1,...,\psi(c_r)=h_r,\psi(c_{r+1})=x_1,...,\psi(c_{r+m})=x_m$. Each of $\psi(c_1),...,\psi(c_{r+m})$ has length at most $L$ when written in the basis $c_1,...,c_{r+m}$. By Proposition \ref{inversenorm}, there is a conjugation automorphism $\rho:H*\gen{x_1}*...*\gen{x_m}\rar H*\gen{x_1}*...*\gen{x_m}$ such that each of $\psi^{-1}\circ\rho(c_1),...,\psi^{-1}\circ\rho(c_{r+m})$ has length at most $K_{r+m}L^{M_{r+m}}$ when written in the basis $c_1,...,c_{r+m}$.

In particular we have that each of $\psi(\psi^{-1}\circ\rho(c_1)),...,\psi(\psi^{-1}\circ\rho(c_{r+m}))$ has length at most $K_{r+m}L^{M_{r+m}}$ when written in the basis $\psi(c_1),...,\psi(c_{r+m})$; this means that each of $\rho(c_1),...,\rho(c_{r+m})$ has length at most $K_{r+m}L^{M_{r+m}}$ when written in the basis $h_1,...,h_r,x_1,...,x_m$. We can take an equation $w$ which is a word of length at most $2$ in $\rho(c_1),...,\rho(c_{r+m})$, and thus is a word of length at most $2K_{r+m}L^{M_{r+m}}$ in $h_1,...,h_r,x_1,...,x_m$. In particular, the total number of occurrences of $x_1,\ol{x}_1,...,x_m,\ol{x}_m$ in $w$ is at most $2K_{r+m}L^{M_{r+m}}$, as desired.
\end{proof}

As in Section \ref{SubsectionGrowth}, we can define the functions $\rho_{g_1,...,g_m}(M)$ and $\rho_{g_1,...,g_m,(d_1,...,d_m)}(M)$ as the growth functions of the languages $\clan{\fI_{g_1,...,g_m}}$ and $\clan{\fJ_{g_1,...,g_m,(d_1,...,d_m)}}$ respectively. The analogous of Theorems \ref{growth1}, \ref{growth2} and \ref{growth3} hold:

\begin{mythm}
The function $\rho_{g_1,...,g_m}(M)$ has exponential growth.
\end{mythm}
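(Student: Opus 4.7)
The plan is to follow the strategy of Theorem \ref{growth1}, adapted to $m$ variables.

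First, by the multi-variable analogue of Theorem \ref{cfreeIg} (stated earlier in this section) the set $\fI_{g_1,\ldots,g_m}$ is a context-free subset of $H * \gen{x_1} * \cdots * \gen{x_m}$. Intersecting the corresponding language with the regular language of cyclically reduced words in the alphabet $\cB = \{h_1^{\pm 1},\ldots,h_r^{\pm 1}, x_1^{\pm 1},\ldots,x_m^{\pm 1}\}$ (via Proposition \ref{cfreeintersection}) shows that $\clan{\fI_{g_1,\ldots,g_m}}$ is context-free. Hence by Theorem \ref{cfreegrowth} the function $\rho_{g_1,\ldots,g_m}(M)$ has either polynomial growth of some fixed degree or exponential growth, and it suffices to exhibit an exponential lower bound.

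For the lower bound I would mimic the rank-$\geq 2$ argument of the proof of Theorem \ref{growth1}. Fix a non-trivial cyclically reduced equation $w \in \clan{\fI_{g_1,\ldots,g_m}}$, which exists because $(g_1,\ldots,g_m)$ depends on $H$, and let $a, b \in \cB$ be the first and last letters of $w$. For any reduced word $h \in \cB^*$ whose first and last letters both avoid $\{a, \ol{a}, b, \ol{b}\}$, the concatenation $hw\ol{h}w$ is cyclically reduced (immediate check of the four junctions between the blocks $h$, $w$, $\ol{h}$, $w$ and of the cyclic junction between the final letter of the last $w$ and the first letter of $h$) and lies in $\fI_{g_1,\ldots,g_m}$ by normality of the ideal. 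Distinct admissible $h$ produce distinct words $hw\ol{h}w$. As long as $|\cB| = 2r + 2m$ is large enough, which is the case in all non-degenerate situations, the number of admissible $h$ of length at most $M$ grows exponentially in $M$, by a standard counting (or transfer-matrix) argument on the regular language of reduced words with constrained first and last letters; this delivers the required exponential lower bound on $\rho_{g_1,\ldots,g_m}(M)$.

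The small-alphabet edge cases not directly covered by this uniform count include $m=1$, which is precisely Theorem \ref{growth1} and already proved. In any other borderline configuration I would adapt the rank-$1$ explicit construction from the proof of Theorem \ref{growth1}, replacing the cyclic generator $h$ there by one of the extra variables $x_i$ (available as soon as $m \geq 2$), with the signs of the exponents chosen inductively so that the partial sums stay bounded and the total length is controlled. The main obstacle, as in the rank-$1$ portion of Theorem \ref{growth1}, is verifying cyclic reducedness uniformly across this exponentially large combinatorial family, but the inductive sign-balancing trick from that proof transfers essentially verbatim.
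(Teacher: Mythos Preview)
Your proposal is correct and follows exactly the approach the paper intends: the paper explicitly states that the multivariate proofs are ``completely analogous to the one-variable case'' and omits them, and your argument is precisely that analogue of the proof of Theorem \ref{growth1}. One small simplification: under the paper's implicit standing assumption $r\ge 1$, whenever $m\ge 2$ you automatically have $|\cB|=2(r+m)\ge 6$, so the $hw\ol{h}w$ count already applies and the only genuine edge case is $m=1$, which is Theorem \ref{growth1} itself.
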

\begin{proof}[Sketch of proof]
By Theorem \ref{cfreegrowth} we have that $\clan{\fI_{g_1,...,g_m}}$ has either polynomial or exponential growth. We deal with two separate cases, depending on whether $H$ has rank $1$ or at least $2$. In both cases, the exact same construction as in the proof of Theorem \ref{growth1} provides an exponential lower bound, providing the desired result.
\end{proof}

\begin{mythm}
The function $\rho_{g_1,...,g_m,(d_1,...,d_m)}(M)$ has either exponential growth, or polynomial growth of degree $k$ for some $k\in\bN$. Moreover, there is an algorithm that, in polynomial time, tells us which case takes place, and in the second case computes the degree $k$ of the growth.
\end{mythm}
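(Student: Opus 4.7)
The plan is to run the same strategy used for Theorems \ref{growth2} and \ref{growth3}, but with the necessary multivariate adaptations. The starting point is the multivariate analogue of Proposition \ref{languagekernel}: the evaluation homomorphism $\varphi_{g_1,\ldots,g_m}:H*\gen{x_1}*\ldots*\gen{x_m}\rar F_n$ has kernel $\fI_{g_1,\ldots,g_m}$, and writing $\beta_k=\varphi_{g_1,\ldots,g_m}(c_k)$ for each generator $c_k$ of $H*\gen{x_1}*\ldots*\gen{x_m}$ (i.e.\ the $h_i$ and the $x_j$), one produces an unambiguous context-free grammar for $\cL(\fI_{g_1,\ldots,g_m})$ of size polynomial in $n$, $r$, $m$ and $L$, via the very same construction used in Proposition \ref{languagekernel}: the proofs of Lemmas \ref{lemma1ker}, \ref{lemma2ker}, \ref{lemma3ker} only involve reasoning about the word $\psi(u)\in\cA^*$ and its prefixes, and do not use anything specific to the single-variable setting.

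Next I would cut this grammar down to the language of interest. The cyclically reduced words over $\{h_1,\ol{h}_1,\ldots,h_r,\ol{h}_r,x_1,\ol{x}_1,\ldots,x_m,\ol{x}_m\}$ having exactly $d_i$ occurrences of $x_i$ and $\ol{x}_i$ for each $i=1,\ldots,m$ form a regular language $\cR$ recognised by a deterministic finite automaton whose state space is (essentially) the product $\prod_{i=1}^m\{0,1,\ldots,d_i\}$ paired with a small state tracking the first/last letter for cyclic reducedness; this automaton has polynomially many states in $d_1,\ldots,d_m$. Applying the unambiguous-preserving construction of Proposition \ref{cfreeintersection2} to the grammar for $\cL(\fI_{g_1,\ldots,g_m})$ and this deterministic automaton yields an unambiguous grammar of polynomial size for the language of cyclically reduced equations of multi-degree $(d_1,\ldots,d_m)$. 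Finally I would apply the conjugation-saturation operation of Proposition \ref{cfreeconjsat} (which only adds a constant number of production rules) to obtain an unambiguous grammar, still of polynomial size, for $\clan{\fJ_{g_1,\ldots,g_m,(d_1,\ldots,d_m)}}$.

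At this point Theorem \ref{cfreegrowth} applies directly: the growth function of any context-free language is either polynomial of some integer degree $k$ or exponential, and there is a polynomial-time algorithm that, given a grammar, distinguishes the two cases and computes $k$ in the polynomial case. As observed at the end of Section \ref{SectionAlgorithms}, this algorithm simplifies and runs faster on unambiguous grammars, which is precisely the kind we have built. Plugging in our grammar gives both the dichotomy and the polynomial-time algorithm asserted in the statement.

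The only genuinely non-routine point is making sure the multivariate generalisation of Proposition \ref{languagekernel} really does go through word-for-word; the main thing to check is that the case analysis distinguishing the symbols $x_1,\ldots,x_m$ from each other (and from the $h_i$) does not introduce any new ambiguity, but since each symbol $c_k$ is still associated with a single reduced word $\beta_k\in\cA^*$, the bookkeeping in the non-terminals $A_i^{\lambda,\mu}$ and $\ol{A}_i^{\lambda,\mu}$ is unchanged. A minor bookkeeping issue is that the size of the product automaton for $\cR$ depends polynomially on $\prod_i(d_i+1)$ rather than on $d_1+\cdots+d_m$, but since $\ram{\cP}=2$ for the grammar of Proposition \ref{languagekernel}, the exponent in $\abs{Q}$ appearing in Proposition \ref{cfreeintersection2} is a fixed constant, and the overall bound remains polynomial in the input data.
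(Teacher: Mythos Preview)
Your approach is essentially the paper's: the paper simply declares the multivariate proofs ``completely analogous to the one-variable case'' and omits them, and you have correctly spelled out the analogy --- build the grammar of Proposition \ref{languagekernel} for the multivariate evaluation map, intersect via Proposition \ref{cfreeintersection2} with a deterministic automaton for cyclically reduced words of the prescribed multi-degree, and feed the result to Theorem \ref{cfreegrowth}.

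One small correction: your final invocation of Proposition \ref{cfreeconjsat} is unnecessary and slightly confused. After intersecting $\cL(\fI_{g_1,\ldots,g_m})$ with the regular language of cyclically reduced words of multi-degree $(d_1,\ldots,d_m)$ you already have exactly $\clan{\fJ_{g_1,\ldots,g_m,(d_1,\ldots,d_m)}}$; there is nothing to saturate. (Conjugation-saturation is used in Theorem \ref{cfreeJgd} only to pass from $\clan{\fJ_{g,d}}$ to $\cL(\fJ_{g,d})$, which is irrelevant for the growth function.) Moreover, the construction in the proof of Proposition \ref{cfreeconjsat} does not in general preserve unambiguity, so that claim should be dropped --- harmlessly, since Theorem \ref{cfreegrowth} does not require an unambiguous grammar anyway.
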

\begin{proof}
By Theorem \ref{languagekernel} we can produce in polynomial time a grammar $(\cN,\cP,S)$ for the language $\cL(\fI_{g_1,...,g_m})$, with $\norma{\cP}$ polynomially bounded. By Proposition \ref{cfreeintersection2} we can intersect with the regular language of all cyclically reduced words containing $d_i$ occurrences of $x_i$ and $\ol{x}_i$, for $i=1,...,m$; this produces in polynomial time a grammar $(\cN',\cP',S')$ for the language $\clan{\fJ_{g_1,...,g_m,(d_1,...,d_m)}}$, with $\norma{\cP'}$ polynomially bounded. By Theorem \ref{cfreegrowth} we have that $\clan{\fI_{g_1,...,g_m,(d_1,...,d_m)}}$ has either exponential growth or polynomial growth of degree $k$ for some $k\in\bN$, and the type of growth (including the degree) can be computed in polynomial time.
\end{proof}

\bibliographystyle{alpha}
\nocite{*}
\bibliography{bibliography.bib}

\end{document}